\begin{document}

\author{MO, Guoduan \\
Dept.of Appl.Math.,Tongji Univ., Shanghai, China, 200092}
\title{ A PROOF ON HYPOTHESIS OF DIRICHLET DIVISOR PROBLEM}
\date{}
\maketitle

\begin{abstract}
We have proved that $\triangle(X)=X^{\frac{1}{4}+\varepsilon(X)}$, $%
\varepsilon(X)=\frac{c}{\log\log X}$.
\end{abstract}

$\underset{}{}$\newtheorem{them}{Theorem}[section] %
\newtheorem{lem}[them]{Lemma} \newtheorem{mylem}{Lemma}[section] %
\renewcommand{\themylem}{\arabic{section}.\arabic{mylem}} %
\newtheorem{coro}[them]{Corollary} \newtheorem{mypro}{Proposition}[section] %
\newtheorem{pro}[them]{Proposition} \newenvironment{proof}{{\noindent\it
Proof:\/}}{\hfill{$\Box$}} \numberwithin{equation}{section}

\textbf{Key Words:}\ \ Dirichlet Divisor Problem, 1/4-Hypothesis.

\textbf{2000 Mathematics Subject Classification: }11N37

\baselineskip 0.2in

\ \

\section{RESULTS AND THOUGHT OF PROOF}

Let
\begin{equation}
D(X)=\sum\limits_{n\leq X}d(n),
\end{equation}%
where $d(n)$ denotes the number of divisors of $n$ as usual, let

\begin{equation}
\triangle (X)=D(X)-X\log X-(2\gamma -1)X,
\end{equation}%
and let $\theta =\inf \{\theta ^{\ast }:\triangle (X)\ll X^{\theta ^{\ast
}}\}$. \ Then the following bounds for $\theta $ have been obtained:

\begin{equation*}
\begin{array}{lllll}
1. & \theta\leq\frac{1}{2} & =0.50000\cdots & \mbox {Dirichlet} & 1849 . \\
\mbox{} & \mbox{} & \mbox{} & \mbox{} &  \\
2. & \theta\leq\frac{1}{3} & =0.33333\cdots & \mbox {Voronoi} & 1903 \\
\mbox{} & \mbox{} & \mbox{} & \mbox{} &  \\
3. & \theta\leq\frac{33}{100} & =0.33000\cdots & \mbox {Vander  Corput} \cite%
{[6]} & 1922 \\
\mbox{} & \mbox{} & \mbox{} & \mbox{} &  \\
4. & \theta\leq\frac{27}{82} & =0.32926\cdots & \mbox {Van der  Corput} \cite%
{[7]} & 1928 \\
\mbox{} & \mbox{} & \mbox{} & \mbox{} &  \\
5. & \theta\leq\frac{15}{46} & =0.32608\cdots & \mbox {Chih,T. T.} \cite{[8]}%
(1950), \mbox{Richert} \cite{[9]} & 1953 \\
\mbox{} & \mbox{} & \mbox{} & \mbox{} &  \\
6. & \theta\leq\frac{13}{40} & =0.32500\cdots & \mbox {Yin  Wenlen} \cite%
{[10]} & 1959 \\
\mbox{} & \mbox{} & \mbox{} & \mbox{} &
\end{array}%
\end{equation*}

\begin{equation*}
\begin{array}{lllll}
7. & \theta\leq\frac{12}{37} & =0.32432\cdots & \mbox {Kolesnik}
\cite{[11]}
& 1969 \\
\mbox{} & \mbox{} & \mbox{} & \mbox{} &  \\
8. & \theta\leq\frac{346}{1067} & =0.32427\cdots & \mbox {Kolesnik} \cite%
{[12]} & 1973\\ 9. & \theta\leq\frac{35}{108} & =0.32407\cdots &
\mbox {Kolesnik} \cite{[13]}
& 1982 \\
\mbox{} & \mbox{} & \mbox{} & \mbox{} &  \\
10. & \theta\leq\frac{139}{429} & =0.32400\cdots & \mbox {Kolesnik} \cite%
{[14]} & 1985 \\
\mbox{} & \mbox{} & \mbox{} & \mbox{} &  \\
11. & \theta\leq\frac{7}{22} & =0.31818\cdots & \mbox {Iwniec and Mozzochi}
\cite{[15]} & 1988 \\
\mbox{} & \mbox{} & \mbox{} & \mbox{} &  \\
12. & \theta\leq\frac{131}{416} & =0.31490\cdots & \mbox
{M.N.Huxley}\cite{[16]} & 2003%
\end{array}%
\end{equation*}

\noindent The hypothesis value is $\theta =1/4$. Moreover, in 1916, Hardy
\cite{[15]} proved that $\theta \geq 1/4$.

In this paper we are going to prove the hypothesis is true. For this
purpose, we will prove the following theorem:

\ \

{\bf Theorem} Let $X_{1}<X_{2}$ be large positive number, $X_{1}\leq
X\leq X_{2}$ and
\begin{equation}
L=\log X_{2}, \ X_{1}\asymp X_{2}.
\end{equation}
Then
\begin{equation}
\triangle(X)=X^{\frac{1}{4}}\Lambda(X)+\delta(X),
\end{equation}
where $\delta(X)\ll X^{\varepsilon}$, $\varepsilon>0$ arbitrary
small, and
$$\Lambda(X)=\frac{X}{(m_{0}+1)^{2}}\sum\limits_{a=1}^{4K} C_{1}(a)
\sum\limits_{n\leq\frac{X_{2}L^{2}}{V}}
d(n)n^{-\frac{5}{4}}\cos(4\pi\sqrt{n}(\sqrt{X}+\frac{am_{0}}{2\sqrt{X}})
-\frac{3\pi}{4}) \ \ \ \ \ \ \ \ \ \ \ \ $$
\begin{equation}+\frac{\sqrt{X}}{m_{0}+1}\sum\limits_{a=1}^{4K+2}C_{2}(a)
\sum\limits_{n\leq\frac{X_{2}L^{2}}{V}}d(n)n^{-\frac{7}{4}}\cos(4\pi\sqrt{n}(\sqrt{X}+\frac{am_{0}}{2\sqrt{X}})-\frac{5\pi}{4}),\end{equation}
with
\begin{equation}
K=[\frac{C_{0}L}{\log L}],\ C_{0}\geq 200,
\end{equation}
$C_{1}(a),C_{2}(a)$ being independent of $X$, and
\begin{equation}C_{1}(a),\  C_{2}(a)\ll \exp(O(\frac{L}{\log L})),
\ m_{0}=2[\sqrt{X_{2}}L^{-2}].\end{equation}

\ \

Clearly, $L$, $K$ and $m_{0}$ are independent of $X$ by (1.3) and (1.6). The
following Corollary is obtained immediately by this Theorem.

\ \

{\bf Corollary}
\begin{equation}
\triangle(X)\ll X^{\frac{1}{4}+\varepsilon(X)},\
0<\varepsilon(X)=O(\frac{1}{\log\log X}).
\end{equation}

\begin{proof}
In fact,
$$
\Lambda(X)\ll\exp(O(\frac{L}{\log
L}))(\sum\limits_{n=1}^{\infty}
d(n)n^{-\frac{5}{4}}+\sum\limits_{n=1}^{\infty}
d(n)n^{-\frac{7}{4}})\ll\exp(O(\frac{L}{\log L}))\ll
X^{(O(\frac{1}{\log L}))},
$$

\noindent where the last inequality is given by $L\asymp\log
X_{1}\asymp\log X$.
\end{proof}

\ \

Clearly, the hypothesis is true by this Corollary.\\

To prove Theorem 1.1, suppose that
\begin{equation}
U=[X],
\end{equation}

\begin{equation}
A=C_{0}\sqrt{UL},\ C_{0}\geq 200,
\end{equation}

\begin{equation}
B=\sqrt{U}+\frac{k+j}{2\sqrt{U}}+\eta,
\end{equation}

\begin{equation}
k=0, 1, \cdots , m\ll m_{0}=[\sqrt{X_{2}}L^{-2}],
\end{equation}

\begin{equation}
j=0, \pm 1, \cdots , \pm j_{0},\ j_{0}\leq\sqrt{V}L,
\end{equation}

\begin{equation}
V=X_{2}^{\varepsilon_{0}}, \ \varepsilon_{0}=\frac{1}{\log L},
\end{equation}

\begin{equation}
\eta=\eta_{1}+\cdots +\eta_{K},\ K=[\frac{C_{0}L}{\log L}],
\end{equation}

\begin{equation}
\eta_{\lambda}=\frac{k_{1}}{\sqrt{U}},\ \lambda=1, 2, \cdots , K,
\end{equation}

\begin{equation}
k_{1}=0, 1, \cdots, m_{1}\ll m_{0}=[\sqrt{X_{2}}L^{-2}],
\end{equation}

\begin{equation}
\xi_{1}=\frac{1}{16\sqrt{U}},\ \xi_{2}=\frac{3}{16\sqrt{U}},
\end{equation}

\begin{equation}
S(B)=A\int\limits_{-\frac{5\sqrt{L}}{A}}^{\frac{5\sqrt{L}}{A}} e^{-\pi
A^{2}\theta^{2}}d\theta\sum\limits_{\xi_{1}<\sqrt{n}-B-\theta<%
\xi_{2}}d(n)n^{-\frac{1}{4}}.
\end{equation}

Clearly,
\begin{eqnarray}
(B+\theta +\xi _{2})^{2}-(B+\theta +\xi _{1})^{2} &=&(\xi _{2}-\xi
_{1})(2B+2\theta +\xi _{1}+\xi _{2})  \notag \\
&=&\frac{1}{8\sqrt{U}}(2\sqrt{U}+O(1))  \notag \\
&<&\frac{1}{3}
\end{eqnarray}%
and
\begin{eqnarray*}
\{(B+\theta +\xi )^{2}\} &=&\{(\sqrt{U}+\frac{k+j}{2\sqrt{U}}+\eta +\theta
+\xi )^{2}\} \\
&=&\{U+k+j+2\sqrt{U}\eta +2\sqrt{U}\xi +2\sqrt{U}\theta +(\frac{k+j}{2\sqrt{U%
}}+\eta +\theta +\xi )^{2}\} \\
&=&\{2\sqrt{U}\xi +2\sqrt{U}\theta +O(L^{-1})\}.
\end{eqnarray*}%
By (1.15) and (1.16) we know that $2\sqrt{U}\eta $ is an integer,
thus
\begin{equation*}
2\sqrt{U}(\xi +\theta )\geq 2\sqrt{U}(\frac{1}{16\sqrt{U}}-\frac{5\sqrt{L}}{A%
})=\frac{1}{8}-\frac{10}{C_{0}}\geq \frac{1}{8}-\frac{1}{20}=\frac{3}{40}
\end{equation*}%
and
\begin{equation*}
2\sqrt{U}(\xi +\theta )\leq 2\sqrt{U}(\frac{3}{16\sqrt{U}}+\frac{5\sqrt{L}}{A%
})=\frac{3}{8}+\frac{10}{C_{0}}\leq \frac{3}{8}+\frac{1}{20}=\frac{17}{40},
\end{equation*}%
therefore,
\begin{equation}
\frac{1}{20}<\{(B+\theta +\xi )^{2}\}<\frac{9}{20}
\end{equation}%
for $\frac{1}{16\sqrt{U}}\leq \xi \leq \frac{3}{16\sqrt{U}}$, $-\frac{5\sqrt{%
L}}{A}\leq \theta \leq \frac{5\sqrt{L}}{A}$. It follows from (1.20) and
(1.21) that there doesn't exist any integer in the interval
\begin{equation*}
\lbrack (B+\theta +\xi _{1})^{2},(B+\theta +\xi _{2})^{2}].
\end{equation*}%
Hence
\begin{equation}
\sum\limits_{\xi _{1}<\sqrt{n}-B-\theta \leq \xi
_{2}}=\sum\limits_{(B+\theta +\xi _{1})^{2}<n\leq (B+\theta +\xi
_{2})^{2}}=0,
\end{equation}%
and by (1.19),
\begin{equation}
S(B)=0.
\end{equation}%
Denote
\begin{equation}
f(\eta )\mbox {\LARGE \textbar
}_{\eta _{K}}=f(\eta _{1}+\eta _{2}+\cdots +\eta _{K})%
\mbox {\LARGE
\textbar }_{\eta _{1}=0}^{\eta _{1}=\frac{k_{1}}{\sqrt{U}}}\cdots
\mbox {\LARGE \textbar
}_{\eta _{K}=0}^{\eta _{K}=\frac{k_{1}}{\sqrt{U}}}.
\end{equation}%
By (1.23),
\begin{equation}
S(B)\mbox {\LARGE \textbar }_{\eta _{K}}=0.
\end{equation}%
Therefore,
\begin{equation}
\Omega =\frac{1}{\sqrt{V}}\sum\limits_{-\sqrt{V}L\leq j\leq \sqrt{V}L}e^{%
\frac{-\pi j^{2}}{V}}\sum_{k_{1}=m_{0}+1}^{2m_{0}}\sum_{m=m_{0}+1}^{2m_{0}}%
\sum_{k=0}^{m}S(B)\mbox
{\LARGE \textbar }_{\eta _{K}}=0.
\end{equation}%
On the other hand, in the following we are going to prove that
\begin{equation}
\Omega =\frac{(-1)^{K+1}(m_{0}+1)^{2}}{4X^{\frac{1}{4}}}(\triangle (X)-X^{%
\frac{1}{4}}\Lambda (X)+O(X^{\varepsilon })).
\end{equation}%
This and (1.26) lead to (1.4).

\ \

For convenience in the proof we are going to use the following definition.

\ \

\textbf{Definition 1.1} If
\begin{equation*}
f(\eta )\mbox {\LARGE \textbar
}_{\eta _{K}}=f(\eta _{1}+\eta _{2}+\cdots +\eta _{K}){%
\mbox {\LARGE
\textbar }}_{\eta _{1}=0}^{\eta _{1}=\frac{k_{1}}{\sqrt{U}}}\cdots {%
\mbox {\LARGE \textbar
}}_{\eta _{K}=0}^{\eta _{K}=\frac{k_{1}}{\sqrt{U}}}\ll Y,
\end{equation*}%
then we write
\begin{equation}
f(\eta )=O_{\eta }(Y),\ \mathrm{or}\ f(\eta )\ll _{\eta }Y.
\end{equation}%
Moreover, we denote

\begin{equation*}
f(\eta )\mid \mbox {\LARGE \textbar }_{\eta _{K}}\mid =\sum\limits_{\eta
_{1}=0,\frac{k_{1}}{\sqrt{U}}}\cdots \sum\limits_{{\eta _{1}=0,\frac{k_{K}}{%
\sqrt{U}}}}\mbox {\LARGE
\textbar }f(\eta _{1}+\eta _{2}+\cdots +\eta _{K})\mbox {\LARGE
\textbar }.
\end{equation*}%
Clearly,
\begin{equation}
f(\eta )\mbox {\LARGE \textbar }_{\eta _{K}}\leq f(\eta )|%
\mbox
{\LARGE \textbar }_{\eta _{K}}|.
\end{equation}%
Particularly, if $f(\eta )\ll M$, then
\begin{equation}
f(\eta )\mbox {\LARGE \textbar }_{\eta _{K}}\leq f(\eta )|%
\mbox {\LARGE
\textbar }_{\eta _{K}}|\ll M2^{K}.
\end{equation}%
For arbitrary small $\varepsilon >0$, or $\varepsilon =\varepsilon
(U)=O(1/\log L)$, we are going to use expressions, such as
\begin{equation}
U^{2\varepsilon }\ll U^{\varepsilon },
\end{equation}
without explanation.

\ \

\section{SOME LEMMAS (I)}

Let
\begin{equation*}
S^{\prime }=\sum\limits_{a\leq n\leq b}{}^{\prime }d(n)f(n),
\end{equation*}%
where $0<a<b<\infty $, $f(x)\in C^{2}[a,b)$, $\sum^{\prime }$ is that if $a$
or $b$ is an integer, then $\frac{1}{2}d(a)n(a)$ or $\frac{1}{2}d(b)n(b)$ is
to be counted instead of $d(a)n(a)$ or $d(b)n(b)$, then (for example, (3.2),
(3.15) in \cite{[2]})
\begin{eqnarray}
S^{\prime } &=&\sum\limits_{a\leq n\leq b}{}^{\prime }d(n)f(n)  \notag \\
&=&\int_{a}^{b}(\log x+2\gamma )f(x)dx+\sum\limits_{n=1}^{\infty
}d(n)\int_{b}^{a}f(x)(\Theta (nx)+O((nx)^{\frac{-5}{4}}))dx,
\end{eqnarray}%
where
\begin{equation}
\Theta (nx)=\sqrt{2}(nx)^{-1/4}(\cos (4\pi \sqrt{nx}+\pi /4)-\frac{1}{32\pi
\sqrt{nx}}\cos (4\pi \sqrt{nx}+3\pi /4)).
\end{equation}

\ \

In the following we are going to evaluate general sum $\sum
d_{\lambda}(n)f(n)$ in $\Gamma-$ function with more accurate $O$-term, and
indicate the results in the finite term. Denote
\begin{equation}
S_{\lambda}=\frac{1}{h_{0}^{\lambda-1}}\int_{0}^{h_{0}}\cdots
\int_{0}^{h_{0}} dh_{1}\cdots dh_{\lambda-1}\sum\limits_{a+h<n^{\frac{1}{%
\lambda}}\leq b+h}d_{\lambda}(n)f(n),
\end{equation}
where
\begin{equation}
d_{\lambda}(n)=\sum\limits_{n_{1}\cdots n_{\lambda}=n}1, \ \lambda\geq2,
\end{equation}

\begin{equation}
f(x)\in C^{\lambda}[a^{\lambda},(b+(\lambda-1)h_{0})^{\lambda}], \ 0<h_{0}<%
\frac{a}{2},
\end{equation}

\begin{equation*}
h=h_{1}+\cdots+h_{\lambda-1}.
\end{equation*}

\ \

\begin{lem}
Let
$$
\beta_{0}=\sum\limits_{j=1}^{\lambda} \left
(\begin{array}{clr}\lambda \\ j\end{array}\right
)((\lambda-1)h_{0})^{j}b^{\lambda-j},
$$

\noindent if  $h_{0}$ is sufficient small such that
$$
\beta_{0}<\min(1-\{a^{\lambda}\},\ 1-\{b^{\lambda}\}),
$$

\noindent then
\begin{equation}
S_{\lambda}=\sum\limits_{a<n^{\frac{1}{\lambda}}\leq
b}d_{\lambda}(n)f(n).
\end{equation}
\end{lem}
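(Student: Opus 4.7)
The plan is to show that the $(\lambda-1)$-fold integral in the definition of $S_\lambda$ is vacuous: its integrand, regarded as a function of $(h_1,\ldots,h_{\lambda-1})$, is identically equal to the target sum $\sum_{a<n^{1/\lambda}\le b}d_\lambda(n)f(n)$. Once this is established, the integrations contribute exactly $h_0^{\lambda-1}$, cancelling the prefactor and yielding the claimed identity.

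The first step is to reformulate the summation condition. Since $n$ is a positive integer, $a+h<n^{1/\lambda}\le b+h$ is equivalent to $(a+h)^\lambda<n\le(b+h)^\lambda$. As $h=h_1+\cdots+h_{\lambda-1}$ ranges over $[0,(\lambda-1)h_0]$, both endpoints move monotonically upward, so the set of admissible $n$ can change only when $(a+h)^\lambda$ or $(b+h)^\lambda$ sweeps past an integer. A short case check, comparing the $h=0$ summation set with the general one, reduces the problem to showing that the half-open intervals
$$I_a=\bigl(a^\lambda,\;(a+(\lambda-1)h_0)^\lambda\bigr]\quad\text{and}\quad I_b=\bigl(b^\lambda,\;(b+(\lambda-1)h_0)^\lambda\bigr]$$
contain no integer.

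By the binomial theorem, $|I_b|=\sum_{j=1}^\lambda\binom{\lambda}{j}((\lambda-1)h_0)^j b^{\lambda-j}=\beta_0$, and since $x\mapsto(x+c)^\lambda-x^\lambda$ is strictly increasing on $x>0$ for fixed $c>0$, also $|I_a|<\beta_0$. Under the hypothesis $\beta_0<1-\{b^\lambda\}$, the right endpoint of $I_b$ satisfies $b^\lambda+\beta_0<b^\lambda+(1-\{b^\lambda\})=\lfloor b^\lambda\rfloor+1$, so the only candidate integer in $I_b$ is excluded; the identical argument with $\beta_0<1-\{a^\lambda\}$ disposes of $I_a$. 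Consequently the set $\{n\in\mathbb{Z}:(a+h)^\lambda<n\le(b+h)^\lambda\}$ coincides with its $h=0$ value $\{n\in\mathbb{Z}:a<n^{1/\lambda}\le b\}$ for every $h\in[0,(\lambda-1)h_0]$.

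Since the summand $d_\lambda(n)f(n)$ has no $h$-dependence either, the integrand is constant and the integrals collapse to $h_0^{\lambda-1}$, proving Lemma 2.1. I do not anticipate any substantive obstacle; the only care required lies at the boundary, namely verifying the cases $a^\lambda\in\mathbb{Z}$ or $b^\lambda\in\mathbb{Z}$ (where $1-\{\cdot\}=1$ and the bound is even looser) and matching the strict inequality $a+h<n^{1/\lambda}$ with the open left endpoint of $I_a$ and the non-strict inequality $n^{1/\lambda}\le b+h$ with the closed right endpoint of $I_b$.
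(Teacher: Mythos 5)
Your proposal is correct and follows essentially the same route as the paper: the hypothesis on $\beta_0$ forces the intervals $(a^{\lambda},(a+h)^{\lambda}]$ and $(b^{\lambda},(b+h)^{\lambda}]$ to contain no integers, so the summation set is independent of $h=h_1+\cdots+h_{\lambda-1}$ and the averaging over $[0,h_0]^{\lambda-1}$ collapses. You merely spell out the details (binomial computation of the interval length, monotonicity giving $|I_a|<\beta_0$, and the endpoint cases) that the paper leaves implicit.
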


In fact, in this case, there are not any integer in $(a^{\lambda},(a+h)^{%
\lambda}]$ and $(b^{\lambda},(b+h)^{\lambda}]$,

\noindent hence,
\begin{equation*}
\sum\limits_{a^{\lambda}<n\leq(a+h)^{\lambda}}=0,\sum\limits_{b^{\lambda}<n%
\leq(b+h)^{\lambda}}=0.
\end{equation*}

\ \

\begin{lem}
For  $0.9<a<b<\infty$,
\begin{equation}
S_{\lambda}=S_{\lambda l}+\sum\limits_{n\leq
N}\frac{d_{\lambda}(n)}{h_{0}^{\lambda-1}}\int_{0}^{h_{0}}\cdots\int_{0}^{h_{0}}
dh_{1}\cdots
dh_{\lambda-1}\int_{(a+h)^{\lambda}}^{(b+h)^{\lambda}}f(x)\Theta_{\lambda}(nx)dx+O(\varepsilon_{1})
\end{equation}
holds for any $\varepsilon_{1}>0$ as $N\geq N(\varepsilon_{1})$,
where
\begin{equation}
S_{\lambda l}=\frac{1}{h_{0}^{\lambda-1}}\int_{0}^{h_{0}}\cdots
\int_{0}^{h_{0}} dh_{1}\cdots dh_{\lambda-1}\int_{(a+h)^{\lambda}}^{(b+h)^{\lambda}}f(n)p_{\lambda}(x)dx,
\end{equation}
with
$$
p_{\lambda}(x)=\frac{1}{2\pi
i}\int_{|S|=1/2}x^{s}\varsigma^{\lambda}(s+1)ds=
\frac{1}{(\lambda-1)!}(\log x)^{\lambda-1}+C_{1}(\log
x)^{\lambda-2}+\cdots+C_{\lambda},
$$
\begin{eqnarray}
\Theta_{\lambda}(nx)
&=&\frac{2}{\sqrt{\lambda}}(nx)^{-\frac{1}{2}+\frac{1}{2\lambda}}\sum\limits_{\alpha=0}^{\alpha_{\lambda}}
\frac{\gamma_{\alpha}^{(\lambda)}(nx)^{\frac{-\alpha}{\lambda}}}{(2\pi\lambda)^{\alpha}}\cos(2n\lambda(nx)^{\frac{1}{\lambda}}
+\frac{(\lambda-1)\pi}{4}+\frac{\alpha\pi}{2})\notag\\
&&+O(nx)^{\frac{-1}{2}+\frac{1}{2\lambda}-\frac{\alpha_{\lambda}+1}{\lambda}}
\end{eqnarray}
and
$$
\gamma_{0}^{(\lambda)}=1,\
\gamma_{1}^{(\lambda)}=\frac{1-\lambda^{2}}{12},
$$
for any natural number $\alpha_{\lambda}$.
\end{lem}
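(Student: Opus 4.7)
The plan is to derive (2.7) from the classical Voronoi-type inversion formula for the generalized divisor function $d_\lambda$, then integrate the resulting identity against the averaging $h_0^{-(\lambda-1)}\int_0^{h_0}\cdots\int_0^{h_0}dh_1\cdots dh_{\lambda-1}$ implicit in the definition (2.3) of $S_\lambda$.

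First, for a fixed value of $h=h_1+\cdots+h_{\lambda-1}$, I would rewrite the inner sum by Abel summation as a Stieltjes integral against $D_\lambda(x)=\sum_{n\leq x}{}'\,d_\lambda(n)$ and invoke Perron's formula
\[
D_\lambda(x)=\frac{1}{2\pi i}\int_{(\sigma)}\zeta^{\lambda}(s+1)\,\frac{x^s}{s}\,ds.
\]
Shifting the contour leftward across the pole of order $\lambda$ of $\zeta^\lambda(s+1)$ at $s=0$ extracts the smooth main term $\int_0^{x}p_\lambda(t)\,dt$, where $p_\lambda$ is the residue displayed in the statement. A further contour shift, combined with the functional equation of $\zeta$, yields the Voronoi-type expansion
\[
D_\lambda(x)=\int_0^{x}p_\lambda(t)\,dt+\sum_{n=1}^{\infty}d_\lambda(n)K_\lambda(nx),
\]
where $K_\lambda$ is a Bessel-type kernel whose large-argument asymptotic expansion is precisely the finite sum $\Theta_\lambda(nx)$ of (2.9). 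Substituting this into the Stieltjes integral, integrating by parts, and interchanging summation with integration produces
\[
\sum_{(a+h)^\lambda<n\leq(b+h)^\lambda}d_\lambda(n)f(n)=\int_{(a+h)^\lambda}^{(b+h)^\lambda}f(x)p_\lambda(x)\,dx+\sum_{n=1}^{\infty}d_\lambda(n)\int_{(a+h)^\lambda}^{(b+h)^\lambda}f(x)\Theta_\lambda(nx)\,dx.
\]

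Second, I would apply the outer averaging $h_0^{-(\lambda-1)}\int_0^{h_0}\cdots\int_0^{h_0}dh_1\cdots dh_{\lambda-1}$ to the displayed identity. The $p_\lambda$-contribution reproduces $S_{\lambda l}$ exactly, and for each $n\leq N$ the corresponding term in the cosine series reproduces the middle contribution in (2.7). It remains to choose $N=N(\varepsilon_1)$ large enough that the tail
\[
R(N)=\sum_{n>N}d_\lambda(n)\cdot\frac{1}{h_0^{\lambda-1}}\int_0^{h_0}\!\cdots\!\int_0^{h_0}dh_1\cdots dh_{\lambda-1}\int_{(a+h)^\lambda}^{(b+h)^\lambda}f(x)\Theta_\lambda(nx)\,dx
\]
satisfies $|R(N)|<\varepsilon_1$.

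The main obstacle is exactly this tail bound, since for $\lambda\geq 2$ the Voronoi series is not absolutely convergent under the crude estimate $|\Theta_\lambda(nx)|\ll(nx)^{-1/2+1/(2\lambda)}$ combined with $d_\lambda(n)\ll n^{\varepsilon}$. The essential gain comes from the $\lambda-1$ layers of averaging: each $\int_0^{h_0}dh_i$ convolves the endpoint $(a+h)^\lambda$ against a uniform density on $[0,h_0]$, and integration by parts against the oscillatory factor $\cos(2\pi\lambda(nx)^{1/\lambda}+\cdots)$ extracts a factor of order $n^{-1/\lambda}$ per averaging. Combined with a single further integration by parts in $x$ using $f\in C^{\lambda}[a^\lambda,(b+(\lambda-1)h_0)^\lambda]$, each summand in $R(N)$ gains an additional decay factor of roughly $n^{-1}$, so that $R(N)$ is dominated by a convergent series of the shape $\sum_{n>N}d_\lambda(n)n^{-3/2+1/(2\lambda)}$. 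Hence $R(N)\to 0$ as $N\to\infty$, and $N(\varepsilon_1)$ can be chosen accordingly. The hypothesis $a>0.9$ is used only to keep the argument of (2.9) bounded away from the origin where that asymptotic expansion ceases to be valid, and to allow $h_0<a/2$ as required by (2.5).
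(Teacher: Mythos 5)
Your overall architecture (main term from the pole of $\varsigma^{\lambda}(s)$ at $s=1$, oscillatory terms from the functional equation, tail controlled by the $(\lambda-1)$-fold $h$-averaging) matches the spirit of the paper, and your numerology for the tail, $\sum_{n>N}d_{\lambda}(n)n^{-3/2+1/(2\lambda)}$, is the right order of gain. But there is a genuine gap at the very first step: you invoke, as a known identity, a pointwise Voronoi expansion
$D_{\lambda}(x)=\int_{0}^{x}p_{\lambda}(t)\,dt+\sum_{n=1}^{\infty}d_{\lambda}(n)K_{\lambda}(nx)$
and then substitute it into the Stieltjes integral and integrate term by term. For $\lambda=2$ this is classical (Voronoi's series is boundedly convergent on compacta, so the interchange can be justified), but the lemma is stated for all $\lambda\geq 2$, and for $\lambda\geq 3$ no such pointwise-convergent expansion is available in the classical literature; such formulas are known only in truncated form with error terms or as Riesz means. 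The subsequent interchange of the (at best conditionally convergent) series with the $x$-integration and the $h$-averaging is exactly the analytic crux, and you pass over it with ``interchanging summation with integration produces\dots''. Your tail bound $R(N)\to 0$ does not repair this, because before you can estimate a tail you must know that the full series identity holds in a sense that permits termwise integration.

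The paper is organized precisely to avoid ever manipulating the infinite Voronoi series. It represents $D_{\lambda}$ by a truncated Perron integral on $\sigma_{0}=1.1$ (using the $h_{\lambda-1}$-average over the sets $e$, $E$ with $\mu=1/T$ to control the discontinuities of $D_{\lambda}$ near integers and keep the truncation error $O(\log T/T)$), moves the line to $\mathrm{Re}\,s=1-\sigma_{0}$ picking up $S_{\lambda l}$ from the pole, applies the functional equation, and only then truncates the Dirichlet series to $n\leq N$ \emph{while still at the abscissa of absolute convergence}; this is where $\varepsilon_{1}$ and $N(\varepsilon_{1})$ arise, via $\delta_{N}\ll\sum_{n>N}d_{\lambda}(n)n^{-1.1}\int_{-T}^{T}(|t|+1)^{\lambda(0.6)}dt$ for fixed $T$. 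For the finitely many remaining $n\leq N$ it extends $T\to\infty$, deforms to the path $c$, and evaluates $J_{\lambda n}(x)=\frac{1}{2\pi i}\int_{c}(nx)^{-s}\chi^{-\lambda}(s)\,ds$ by Stirling's formula and Mellin's formula, which is where $\Theta_{\lambda}(nx)$ and the coefficients $\gamma_{\alpha}^{(\lambda)}$ come from. If you want to keep your route, you must either restrict to $\lambda=2$ (which is all the paper uses) and justify the bounded convergence and termwise integration there, or replace the appeal to a ``classical'' Voronoi identity for general $\lambda$ by a truncation-first argument of the paper's type.
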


\ \

(In this paper we need the case $\lambda=2$ only.)

\ \

\begin{lem}
$$
S=S_{2}=\frac{1}{h_{0}}\int_{0}^{h_{0}}
dh\sum\limits_{a+h<\sqrt{n}\leq b+h}d(n)f(n) \ \ \ \ \ \ \ \ \ \ \ \
\ \ \ \ \ \ \ \ \ \ $$
\begin{equation}
=S_{l}+\sum\limits_{n\leq N}\frac{d(n)}{h_{0}}\int_{0}^{h_{0}}dh \int_{(a+h)^{2}}^{(b+h)^{2}}f(x)\Theta(nx)dx+O(\varepsilon_{1})
\end{equation}
holds for any $\varepsilon_{1}>0$ as $N\geq N(\varepsilon_{1})$,
where

$$
0.9<a<b<\infty, \ 0<h_{0}<a/2,\  f(x)\in C^{2}[a,b+h_{0}],
$$
\begin{equation}
S_{l}=\frac{1}{h_{0}}\int_{0}^{h_{0}}dh
\int_{(a+h)^{2}}^{(b+h)^{2}}f(x)(\log x+2\gamma)dx,
\end{equation}
\begin{equation}
\Theta(nx)=\sqrt{2}(nx)^{-\frac{1}{4}}\sum\limits_{\alpha=0}^{\alpha_{0}}
\frac{\gamma_{\alpha}(nx)^{\frac{-\alpha}{2}}}{(4\pi)^{\alpha}}\cos(4\pi
\sqrt{nx}+\frac{\pi}{4}+
\frac{\alpha\pi}{2})+O((nx)^{\frac{-3}{4}-\frac{\alpha_{0}}{2}}),
\end{equation}
with
\begin{equation}
\gamma_{0}=1,\ \gamma_{1}=-\frac{1}{8},\ \gamma_{2}=\frac{9}{128},\
\gamma_{3}=-\frac{75}{1024},\gamma_{4}=\frac{3675}{2^{15}},\
\gamma_{5}= -\frac{59535}{2^{18}},\
\gamma_{6}=\frac{2401245}{2^{22}}\cdots
\end{equation}
 for any natural number $\alpha_{0}$.
\end{lem}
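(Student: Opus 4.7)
The plan is to deduce Lemma 2.3 directly as the $\lambda=2$ specialization of Lemma 2.2; equivalently, it is the averaged, higher-order refinement of the Voronoi-type identity (2.1)--(2.2).

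First I would identify $p_{2}(x)$. The integrand $\zeta^{2}(s+1)$ has Laurent expansion $s^{-2} + 2\gamma s^{-1} + O(1)$ at $s=0$, so the residue theorem applied to $\frac{1}{2\pi i}\int_{|s|=1/2} x^{s}\zeta^{2}(s+1)\,ds$ gives $p_{2}(x) = \log x + 2\gamma$, which reproduces the main term $S_{l}$ in (2.11). Next I would verify the oscillatory kernel: substituting $\lambda=2$ into (2.9), the prefactor $\frac{2}{\sqrt{\lambda}}(nx)^{-1/2+1/(2\lambda)}$ collapses to $\sqrt{2}(nx)^{-1/4}$, the scale $(2\pi\lambda)^{\alpha}$ to $(4\pi)^{\alpha}$, the argument $2\pi\lambda(nx)^{1/\lambda}$ to $4\pi\sqrt{nx}$, and the phase $(\lambda-1)\pi/4$ to $\pi/4$, matching (2.12) termwise.

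The coefficients $\gamma_{\alpha}$ themselves arise from the Hankel asymptotic expansions of $Y_{0}$ and $K_{0}$ at $z=4\pi\sqrt{nx}$, the standard Bessel functions entering Voronoi's identity for $d(n)$. With the classical normalization $a_{k}(0)=\prod_{j=1}^{k}(2j-1)^{2}/(8^{k}k!)$, a sign-tracking through $\cos$ and $\sin$ yields the sequence $1,\,-1/8,\,9/128,\,-75/1024,\,3675/2^{15},\ldots$ displayed in (2.13); the two leading coefficients are already visible in the formula (2.2) quoted from [2], and the remainder term $O((nx)^{-3/4-\alpha_{0}/2})$ is precisely the first neglected Hankel term.

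Finally, I would justify the truncation with error $O(\varepsilon_{1})$. Since $\Theta(nx)=O((nx)^{-1/4})$ and one integration by parts against $f\in C^{2}$ gains a factor of $n^{-1/2}$ from the oscillatory phase $e^{\pm 4\pi i\sqrt{nx}}$, the tail $\sum_{n>N}d(n)\int f(x)\Theta(nx)\,dx$ is dominated by a convergent series $\sum_{n>N} d(n) n^{-3/4-\alpha_{0}/2}\ll \varepsilon_{1}$ for $N\ge N(\varepsilon_{1})$. The averaging over $h\in[0,h_{0}]$ is linear and commutes with every step, so it passes through transparently.

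The main technical obstacle is essentially bookkeeping: aligning the Hankel coefficients of $Y_{0}$ and $K_{0}$ with the normalization chosen in (2.12) so that exactly the sequence $\gamma_{\alpha}$ of (2.13) emerges, keeping careful track of the powers of $4\pi$ versus the Bessel argument $4\pi\sqrt{nx}$ and of the phase shift $\alpha\pi/2$. Once $\gamma_{0}=1$ and $\gamma_{1}=-1/8$ are matched against (2.2), the remaining coefficients are forced by the standard recursion for the $a_{k}(0)$.
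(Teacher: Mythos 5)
Your overall route is the same as the paper's: Lemma 2.3 is obtained by specializing Lemma 2.2 to $\lambda=2$, and your checks of the main term (residue of $x^{s}\varsigma^{2}(s+1)$ at $s=0$ giving $\log x+2\gamma$) and of the kernel's shape (prefactor $\sqrt{2}(nx)^{-1/4}$, scale $(4\pi)^{\alpha}$, phase $\pi/4+\alpha\pi/2$) match what the paper does. Where you genuinely differ is in how the explicit values (2.13) are produced: the paper takes $\lambda=2$ in (2.49)--(2.50), i.e. it expands $e^{\mu_{2}(s)}$ with $\mu_{2}(s)=(s-1/2)\log(1+\frac{1}{2s})-1/2+2v(\frac{s}{2}+1/4)-v(s)$ into the form $1+\sum_{\alpha}\gamma_{\alpha}/((s-1)\cdots(s-\alpha))+R_{2}(s)$ and reads off $\gamma_{1},\dots,\gamma_{6}$, whereas you identify $\Theta$ with the Hankel expansion of the classical Voronoi kernel built from $Y_{0}$ and $K_{0}$ and quote the closed form $a_{k}(0)=(-1)^{k}\prod_{j\le k}(2j-1)^{2}/(8^{k}k!)$. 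Your numbers do agree with (2.13) (e.g. $a_{4}(0)=3675/2^{15}$, $a_{5}(0)=-59535/2^{18}$, $a_{6}(0)=2401245/2^{22}$), and this is arguably a cleaner source for the coefficients than the paper's unexhibited ``calculation.'' But to make it a proof rather than a plausibility match you must actually identify the $\lambda=2$ kernel $J_{n2}(x)=\frac{1}{2\pi i}\int_{c}(nx)^{-s}\chi^{-2}(s)\,ds$ with $-2\pi Y_{0}(4\pi\sqrt{nx})+4K_{0}(4\pi\sqrt{nx})$ (or verify that your $a_{\alpha}(0)$ satisfy the paper's defining relation (2.50)); matching only $\gamma_{0},\gamma_{1}$ against (2.2) and appealing to ``the standard recursion'' does not by itself force the higher coefficients of the paper's expansion to be the Bessel ones.

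The concrete flaw is your final truncation paragraph. One integration by parts against the phase $4\pi\sqrt{nx}$ gains $n^{-1/2}$, so the $\alpha=0$ part of $\Theta$ contributes $\ll d(n)n^{-3/4}$ per term, and $\sum_{n>N}d(n)n^{-3/4}$ diverges; the exponent $-3/4-\alpha_{0}/2$ you quote applies only to the $O$-term of (2.12), not to the dominant oscillatory term. So the tail is not absolutely convergent at the level you claim, and the statement that the $h$-average ``passes through transparently'' inverts the actual logic: in the paper the averaging over $h$ (equivalently the extra $h$-integration of the boundary terms, which gains a further $n^{-1/2}$, or the careful $T,T_{1}\to\infty$ contour arguments with the exceptional set $e$ in the proof of Lemma 2.2) is precisely what secures the convergence asserted in the ``$O(\varepsilon_{1})$ as $N\ge N(\varepsilon_{1})$'' clause. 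If you simply cite Lemma 2.2 this paragraph is unnecessary, but as an independent justification it would fail; either drop it and lean on Lemma 2.2, or replace it by an argument that uses the $h$-average (or a second integration by parts with controlled, $h$-averaged boundary terms) to reach a summable bound such as $d(n)n^{-5/4}$.
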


(in this paper we need not the concrete number values of $\gamma_{j}$, $%
j\geq1$, we see $\gamma_{0}$, $\gamma_{1}$ be consistent for known results
in (2.2).)

A corollary of Lemma 2.3 is the following Lemma 2.4

\begin{lem}
Let
$$
S=\frac{1}{h_{0}}\int_{h_{1}}^{h_{2}}g(h)dh\sum\limits_{a+h<\sqrt{n}\leq
b+h}d(n)f(n),
$$
where $a+h_{1}>0.9$, $h_{1}<h_{2}$, $0<a<b<\infty$, $g(h)\in
C^{2}[h_{1},h_{2}]$, $f(x)\in C^{2}[(a+h_{1})^{2},(b+h_{2})^{2}]$,
$h_{0}=\int_{h_{1}}^{h_{2}}g(h)dh>0$, then

\begin{eqnarray*}
S&=&\frac{1}{h_{0}}\int_{h_{1}}^{h_{2}}g(h)dh
\int_{(a+h)^{2}}^{(b+h)^{2}}f(x)(\log x+2\gamma)dx \\
&&+\sum\limits_{n\leq N}\frac{d(n)}{h_{0}}\int_{h_{1}}^{h_{2}}g(h)dh
\int_{(a+h)^{2}}^{(b+h)^{2}}f(x)\Theta(nx)dx+O(\varepsilon_{1})
\end{eqnarray*}
holds for any $\varepsilon_{1}>0$ as $N\geq N(\varepsilon_{1})$,
where $\Theta(nx)$ is (2.12).
\end{lem}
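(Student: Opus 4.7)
The plan is to reduce Lemma 2.4 to Lemma 2.3 via a piecewise-constant approximation of the weight $g(h)$. Partition $[h_{1},h_{2}]$ into $M$ equal subintervals $I_{j}=[h_{1}+(j-1)\delta,\,h_{1}+j\delta]$ with $\delta=(h_{2}-h_{1})/M$, and set $g_{j}=g(h_{j}^{*})$ at the midpoint of $I_{j}$. Writing $T(h):=\sum_{a+h<\sqrt{n}\leq b+h}d(n)f(n)$, I split
\[
S \;=\; \frac{1}{h_{0}}\sum_{j=1}^{M}g_{j}\int_{I_{j}}T(h)\,dh \;+\; \frac{1}{h_{0}}\sum_{j=1}^{M}\int_{I_{j}}\bigl(g(h)-g_{j}\bigr)T(h)\,dh.
\]
Because $g\in C^{2}$, on each $I_{j}$ we have $|g(h)-g_{j}|=O(\delta^{2}\|g''\|_{\infty})$, and the trivial bound on the divisor sum controls $\sup_{h}|T(h)|$ uniformly; the second term is therefore $O(M^{-2})$ and will be absorbed into the final $O(\varepsilon_{1})$ once $M$ is taken large.

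On each $I_{j}$ the weight is now constant, so I apply Lemma 2.3 after the substitution $h=h_{1}+(j-1)\delta+h'$, which replaces $(a,b,h_{0})$ by $(a+h_{1}+(j-1)\delta,\;b+h_{1}+(j-1)\delta,\;\delta)$. The hypothesis $a+h_{1}>0.9$ is preserved and $\delta<(a+h_{1})/2$ holds once $M$ is large. This yields, for each $j$ and any $\varepsilon'>0$,
\[
\frac{1}{\delta}\int_{I_{j}}T(h)\,dh \;=\; \frac{1}{\delta}\int_{I_{j}}dh\int_{(a+h)^{2}}^{(b+h)^{2}}f(x)(\log x+2\gamma)\,dx \;+\; \sum_{n\leq N}\frac{d(n)}{\delta}\int_{I_{j}}dh\int_{(a+h)^{2}}^{(b+h)^{2}}f(x)\Theta(nx)\,dx \;+\; O(\varepsilon'),
\]
valid as soon as $N\geq N_{0}(\varepsilon',\delta)$.

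Multiplying each sub-identity by $g_{j}\delta/h_{0}$ and summing in $j$, the two right-hand sums become Riemann sums for the two integrals displayed in the statement of Lemma 2.4. Since the $h$-integrands $\int_{(a+h)^{2}}^{(b+h)^{2}}f(x)(\log x+2\gamma)\,dx$ and $\int_{(a+h)^{2}}^{(b+h)^{2}}f(x)\Theta(nx)\,dx$ are $C^{1}$ in $h$, uniformly over $n\leq N$, the Riemann-sum error is $O(M^{-1})$. Choosing $M$ large enough that the Riemann-sum error and the $g$-approximation error are each at most $\varepsilon_{1}/3$, then setting $\varepsilon'=\varepsilon_{1}/(3M)$ and $N\geq\max_{j}N_{0}(\varepsilon',\delta)$, produces the identity with total error $O(\varepsilon_{1})$.

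The main obstacle is the dependence of the Lemma 2.3 threshold $N_{0}(\varepsilon',\delta)$ on the shrinking averaging window $\delta$: a narrower window forces a larger cutoff. The quantifier order in Lemma 2.4 ("for any $\varepsilon_{1}>0$ as $N\geq N(\varepsilon_{1})$") permits $M$, and hence $\delta$, to be chosen as a function of $\varepsilon_{1}$ before $N$, so this dependence causes no difficulty; it is also the only place where the smoothness assumption $g\in C^{2}$ is used in an essential way.
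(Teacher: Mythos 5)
There is a genuine gap at the final step, where you replace the piecewise\--constant weight by $g$ inside the sum over $n\leq N$. After multiplying the $j$-th application of Lemma 2.3 by $g_{j}\delta/h_{0}$ and summing, you must pass from $\sum_{j}g_{j}\int_{I_{j}}(\cdots)\,dh$ to $\int_{h_{1}}^{h_{2}}g(h)(\cdots)\,dh$ separately in \emph{each} of the $N$ oscillatory terms, and the errors add with weight $d(n)$. The uniform $C^{1}$ bound you invoke gives only a per-$n$ error $O(1/M)$, so the total is $O\!\left(M^{-1}\sum_{n\leq N}d(n)\right)$, and even using the oscillation in $x$ (namely $\int_{(a+h)^{2}}^{(b+h)^{2}}f(x)\Theta(nx)\,dx\ll n^{-3/4}$) it is only $O\!\left(\delta N^{1/4+\epsilon}\right)$. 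Since the quantifier order forces you to fix $M$ (hence $\delta$) \emph{before} letting $N\to\infty$, this error is not $O(\varepsilon_{1})$; the statement ``the Riemann-sum error is $O(M^{-1})$'' silently treats the $n$-sum as having boundedly many terms, and the sentence ``choosing $M$ large enough that the Riemann-sum error \ldots is at most $\varepsilon_{1}/3$'' is circular because that error grows with the later choice of $N$. (A minor slip in the same spirit: pointwise $|g(h)-g_{j}|=O(\delta\|g'\|_{\infty})$, not $O(\delta^{2}\|g''\|_{\infty})$; this one is harmless, giving a total $O(1/M)$.)

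The gap is repairable, but it needs an estimate that is uniform in $N$: exploit the oscillation in $h$ as well, e.g.\ integrate by parts in $h$ to show the per-$n$ replacement error is $\ll d(n)\min\!\left(\delta n^{-3/4},\,n^{-5/4}\right)$, whose sum over \emph{all} $n$ is $O(\delta^{1/2-\epsilon})$ independently of $N$; then choosing $M$ large in terms of $\varepsilon_{1}$ does close the argument. Note also that this is not the paper's route: the paper obtains Lemma 2.4 by rerunning the Perron/contour proof of Lemmas 2.2--2.3 with the weight $g(h)$ carried along (nothing in that proof uses constancy of the averaging weight, only $g\in C^{2}$ and $h_{0}=\int g>0$), which avoids the approximation and limit-interchange issue entirely; if you prefer your reduction, you must supply the uniform-in-$N$ tail estimate above.
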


Clearly, Lemma 2.2 has also similar corollary.

\ \

\textit{Proof} of Lemma 2.2: Let
\begin{equation}
D_{\lambda }(x)=\sum\limits_{n\leq x}d_{\lambda }(n),
\end{equation}%
then

\begin{equation*}
\sum\limits_{(a+h)^{\lambda }<n\leq (b+h)^{\lambda }}d_{\lambda
}(n)f(n)=\int_{(a+h)^{\lambda }}^{(b+h)^{\lambda }}f(x)dD_{\lambda }(x)\ \ \
\ \ \ \ \ \ \ \ \ \ \ \ \ \ \ \ \ \ \
\end{equation*}%
\begin{equation}
=f(x)D_{\lambda }(x){\mbox {\LARGE \textbar }}_{x=(a+h)^{\lambda
}}^{x=(b+h)^{\lambda }}-\int_{(a+h)^{\lambda }}^{(b+h)^{\lambda }}f^{\prime
}(x)D_{\lambda }(x)dx.
\end{equation}%
the last integral is%
\begin{equation*}
-\int_{(a+h)^{\lambda }}^{(b+h)^{\lambda }}f^{\prime }(x)D_{\lambda
}(x)dx=-\int_{(a+h)^{\lambda }}^{(b+h)^{\lambda }}f^{\prime
}(x)(\int_{0}^{x}D_{\lambda }(y)dy)_{x}^{\prime }dx\ \ \ \ \ \ \ \ \ \ \ \ \
\ \ \ \ \ \ \ \ \ \ \ \
\end{equation*}

\begin{eqnarray}
&=&-f^{\prime }(x)\int_{0}^{x}D_{\lambda }(y)dy{\mbox {\LARGE \textbar
}}_{x=(a+h)^{\lambda }}^{x=(b+h)^{\lambda }}+\int_{(a+h)^{\lambda
}}^{(b+h)^{\lambda }}f^{\prime \prime }(x)(\int_{0}^{x}D_{\lambda }(y)dy)dx
\notag \\
&=&\frac{-f^{\prime }(x)}{2\pi i}\int_{\sigma _{0}-iT}^{\sigma _{0}+iT}\frac{%
x^{s+1}\varsigma ^{\lambda }(s)ds}{s(s+1)}{\mbox {\LARGE
\textbar }}_{x=(a+h)^{\lambda }}^{x=(b+h)^{\lambda }}  \notag \\
&&+\frac{1}{2\pi i}\int_{\sigma _{0}-iT}^{\sigma
_{0}+iT}ds\int_{(a+h)^{\lambda }}^{(b+h)^{\lambda }}\frac{f^{\prime \prime
}(x)x^{s+1}\varsigma ^{\lambda }(s)}{s(s+1)}dx+\delta ,
\end{eqnarray}%
where $\sigma _{0}=1.1$ and

\begin{eqnarray*}
\delta &=&\frac{1}{2\pi i}(\int_{\sigma _{0}-i\infty }^{\sigma
_{0}-iT}ds+\int_{\sigma _{0}+iT}^{\sigma _{0}+i\infty }ds)(-\frac{f^{\prime
}(x)x^{s+1}\varsigma ^{\lambda }(s)}{s(s+1)}{\mbox {\LARGE \textbar }}%
_{x=(a+h)^{\lambda }}^{x=(b+h)^{\lambda }} \\
&&+\int_{(a+h)^{\lambda }}^{(b+h)^{\lambda }}\frac{f^{\prime \prime
}(x)x^{s+1}\varsigma ^{\lambda }(s)}{s(s+1)}dx) \\
&\ll& \int_{T}^{\infty }\frac{dt}{t^{2}}\ll \frac{1}{T}.
\end{eqnarray*}%
By (2.16),
\begin{equation}
-\int_{(a+h)^{\lambda }}^{(b+h)^{\lambda }}f^{\prime }D_{\lambda }(x)dx=-%
\frac{1}{2\pi i}\int_{(a+h)^{\lambda }}^{(b+h)^{\lambda }}f^{\prime
}\int_{\sigma _{0}-iT}^{\sigma _{0}+iT}\frac{x^{s}\varsigma ^{\lambda }(s)ds%
}{s}+O(\frac{1}{T}).
\end{equation}

Suppose $e$ is the set of $h_{\lambda-1}$ and $e$ satisfies $%
|x_{\lambda}+h_{\lambda-1}-m^{\frac{1}{\lambda}}|<\mu$, \noindent
where $m$
is an natural number, $x_{\lambda}=x+h_{1}+\cdots+h_{\lambda-2} $ fixed, $%
E=(0,h_{0})\setminus e$. We have

\begin{equation*}
\int_{e}f((x_{\lambda}+h_{\lambda-1})^{\lambda})D_{\lambda}((x_{\lambda}+h_{%
\lambda-1})^{\lambda})dh_{\lambda-1}\ll\mu,
\end{equation*}

\begin{equation*}
\int_{E}f((x_{\lambda}+h_{\lambda-1})^{\lambda})D_{\lambda}((x_{\lambda}+h_{%
\lambda-1})^{\lambda})dh_{\lambda-1} \ \ \ \ \ \ \ \ \ \ \ \ \ \ \ \ \ \ \ \
\ \ \ \ \ \ \ \ \
\end{equation*}

\begin{equation*}
\ \ \ \ \ \ \ =\int_{E}\frac{f((x_{\lambda }+h_{\lambda -1})^{\lambda })}{%
2\pi i}dh_{\lambda -1}\int_{\sigma _{0}-iT}^{\sigma _{0}+iT}\frac{%
(x_{\lambda }+h_{\lambda -1})^{\lambda s}\varsigma ^{\lambda }(s)}{s}%
ds+\delta _{E},
\end{equation*}%
where

\begin{eqnarray*}
\delta_{E}&=&\int_{E}\frac{f((x_{\lambda}+h_{\lambda-1})^{\lambda})}{2\pi i}%
dh_{\lambda-1}
(\int_{\sigma_{0}-i\infty}^{\sigma_{0}-iT}+\int_{\sigma_{0}+iT}^{%
\sigma_{0}+i\infty}) \frac{(x_{\lambda }+h_{\lambda-1})^{\lambda
s}\varsigma^{\lambda}(s)ds}{s} \\
&=&\int_{E}\frac{f((x_{\lambda}+h_{\lambda-1})^{\lambda})}{2\pi i}%
dh_{\lambda-1}\sum\limits_{n=1}^{\infty}
d_{\lambda}(n)(\int_{\sigma_{0}-i\infty}^{\sigma_{0}-iT}+\int_{%
\sigma_{0}+iT}^{\sigma_{0}+i\infty}) \frac{1}{s}(\frac{x_{\lambda}+h_{%
\lambda-1}}{n^{\frac{1}{\lambda}}})^{\lambda s}ds \\
&\ll&\frac{1}{T}\int_{E}dh_{\lambda-1}\sum\limits_{n=1}^{\infty}d_{%
\lambda}(n)(\frac{(x_{\lambda}+h_{\lambda-1})^{\lambda}}{n})^{\sigma_{0}}
\frac{1}{|\log\frac{x_{\lambda}+h_{\lambda-1}}{n^{\frac{1}{\lambda}}}|} \\
&\ll&\frac{1}{T}\int_{E}dh_{\lambda-1}(1+\sum\limits_{\frac{2}{3}\leq|n^{%
\frac{1}{\lambda}}-x_{\lambda}-h_{\lambda-1}|\leq \frac{3}{2}}\frac{1}{|n^{%
\frac{1}{\lambda}}-x_{\lambda}-h_{\lambda-1}|}) \\
&\ll&\frac{1}{T}(1+\sum\limits_{\frac{1}{12}\leq|n^{\frac{1}{\lambda}%
}-x_{\lambda}|\leq3}\int_{E} \frac{dh_{\lambda}}{|n^{\frac{1}{\lambda}%
}-x_{\lambda}-h_{\lambda-1}|}) \\
&\ll&\frac{1}{T}\log\frac{1}{\mu}
\end{eqnarray*}

\noindent for $0<h_{0}<a/2$, $0<\mu <1/2$, and the constant in $^{\prime
\prime }O^{\prime \prime }$ is independent of $T$. Furthermore,
\begin{equation*}
\int_{e}f((x_{\lambda }+h_{\lambda -1})^{\lambda })dh_{\lambda
-1}\int_{\sigma _{0}-iT}^{\sigma _{0}+iT}\frac{(x_{\lambda }+h_{\lambda
-1})^{\lambda s}\varsigma ^{\lambda }(s)}{s}dS\ll \int_{e}\log Tdh_{\lambda
-1}\ll \mu \log T.
\end{equation*}

\noindent Hence, taking $\mu =\frac{1}{T}$, we have
\begin{equation*}
\int_{0}^{h_{0}}\cdots \int_{0}^{h_{0}}dh_{1}\cdots dh_{\lambda
-1}f(x)D_{\lambda }(x){\mbox {\LARGE \textbar }}_{x=(a+h)^{\lambda
}}^{x=(b+h)^{\lambda }}=\ \ \ \ \ \ \ \ \ \ \ \ \ \ \ \ \ \ \ \ \ \ \ \ \ \
\ \ \ \ \ \ \ \ \ \ \ \ \ \
\end{equation*}%
\begin{equation}
=\int_{0}^{h_{0}}\cdots \int_{0}^{h_{0}}dh_{1}\cdots dh_{\lambda -1}f(x)%
\frac{1}{2\pi i}\int_{\sigma _{0}-iT}^{\sigma _{0}+iT}\frac{%
f(x)x^{s}\varsigma (s)ds}{s}{\mbox
{\LARGE \textbar }}_{x=(a+h)^{\lambda }}^{x=(b+h)^{\lambda }}+O(\frac{\log T%
}{T}).
\end{equation}%
It follows from (2.3), (2.15), (2.17) and (2.18) that%
\begin{eqnarray}
S_{\lambda } &=&\frac{1}{h_{0}^{\lambda -1}}\int_{0}^{h_{0}}\cdots
\int_{0}^{h_{0}}dh_{1}\cdots dh_{\lambda -1}\frac{1}{2\pi i}\int_{\sigma
_{0}-iT}^{\sigma _{0}+iT}\frac{\varsigma ^{\lambda }(s)ds}{s}%
(-\int_{(a+h)^{\lambda }}^{(b+h)^{\lambda }}f^{\prime }(x)x^{s}dx  \notag \\
&&+f(x)x^{s}{\mbox {\LARGE
\textbar }}_{x=(a+h)^{\lambda }}^{x=(b+h)^{\lambda }})+O(\frac{\log T}{T})
\notag \\
&=&\frac{1}{h_{0}^{\lambda -1}}\int_{0}^{h_{0}}\cdots
\int_{0}^{h_{0}}dh_{1}\cdots dh_{\lambda -1}\frac{1}{2\pi i}%
\int_{(a+h)^{\lambda }}^{(b+h)^{\lambda }}f(x)dx\int_{\sigma
_{0}-iT}^{\sigma _{0}+iT}x^{s-1}\varsigma ^{\lambda }(s)ds+O(\frac{\log T}{T}%
)  \notag \\
&=&\frac{1}{h_{0}^{\lambda -1}}\int_{0}^{h_{0}}\cdots
\int_{0}^{h_{0}}dh_{1}\cdots dh_{\lambda -1}\frac{1}{2\pi i}%
\int_{(a+h)^{\lambda }}^{(b+h)^{\lambda }}f(x)dx\int_{1-\sigma
_{0}-iT}^{1-\sigma _{0}+iT}x^{s-1}\varsigma ^{\lambda }(s)ds  \notag \\
&&+S_{\lambda l}+\delta _{+}+\delta _{-}+O(\varepsilon _{1})
\end{eqnarray}%
holds for any $\varepsilon _{1}>0$, as $T$ large, where $h=h_{1}+\cdots
+h_{\lambda -1},\ \lambda \geq 2,\ S_{\lambda l}$ is (2.8) and
\begin{equation*}
\delta _{\pm }=\mp \frac{1}{2\pi ih_{0}^{\lambda -1}}\int_{1-\sigma _{0}\pm
iT}^{\sigma _{0}\pm iT}\varsigma ^{\lambda }(s)I_{\delta }(s)ds,
\end{equation*}

\begin{equation}
I_{\delta }(s)=\int_{0}^{h_{0}}\cdots \int_{0}^{h_{0}}dh_{1}\cdots
dh_{\lambda -1}\int_{a^{\lambda }}^{b^{\lambda }}f(x+h)(x+h)^{s-1}dx.
\end{equation}%
Using the integrations by parts and (2.5), we see that $I_{\delta }\ll \frac{%
1}{T^{\lambda }}$. It is known that (for example, 2.1.8, 2.1.3 in \cite{[1]}%
)
\begin{equation}
\varsigma (s)=\frac{\varsigma (1-s)}{\chi (1-s)},\ \chi (s)\chi (1-s)=1,
\end{equation}%
\begin{equation}
\frac{1}{\chi (s)}=2(2\pi )^{-s}\Gamma (s)\cos \frac{\pi s}{2}=(\frac{t}{%
2\pi })^{\sigma -1/2}e^{it\log \frac{t}{2\pi e}}(1+O(\frac{1}{t})),
\end{equation}%
\begin{equation*}
t>1,\ s=\sigma +it,
\end{equation*}%
\begin{equation}
\varsigma (s)\ll |t|^{\frac{1}{6}},\ \sigma \geq 1/2,|t|>1.
\end{equation}%
Hence
\begin{eqnarray}
\delta _{\pm } &\ll &\frac{1}{T^{\lambda }}(\int_{1-\sigma _{0}}^{\frac{1}{2}%
}+\int_{\frac{1}{2}}^{\sigma _{0}})|\varsigma ^{\lambda }(\sigma \pm
iT)|d\sigma  \notag \\
&\ll &\frac{1}{T^{\lambda }}\int_{\frac{1}{2}}^{\sigma _{0}}(1+T^{\lambda
(\sigma -1/2)})T^{\frac{\lambda }{6}}d\sigma \ll T^{\lambda (\frac{1}{6}%
+\sigma _{0}-3/2)}\ll T^{\frac{-\lambda }{5}}
\end{eqnarray}%
for $\sigma _{0}=1.1$. Putting $s=1-s^{\prime }$ in the integral (2.19),
noticing (2.21 ) and (2.24),
\begin{eqnarray}
S_{\lambda } &=&\frac{1}{h_{0}^{\lambda -1}}\int_{0}^{h_{0}}\cdots
\int_{0}^{h_{0}}dh_{1}\cdots dh_{\lambda -1}\frac{1}{2\pi i}%
\int_{(a+h)^{\lambda }}^{(b+h)^{\lambda }}\frac{f(x)dx}{2\pi i}\int_{\sigma
_{0}-iT}^{\sigma _{0}+iT}\frac{x^{-s}\varsigma ^{\lambda }(s)ds}{\chi
^{\lambda }(s)}  \notag \\
&&+S_{\lambda l}+O(\varepsilon _{1})  \notag \\
&=&\sum\limits_{n\leq N}\frac{d_{\lambda }(n)}{h_{0}^{\lambda -1}}%
\int_{0}^{h_{0}}\cdots \int_{0}^{h_{0}}dh_{1}\cdots dh_{\lambda -1}\frac{1}{%
2\pi i}\int_{(a+h)^{\lambda }}^{(b+h)^{\lambda }}\frac{f(x)dx}{2\pi i}%
\int_{\sigma _{0}-iT}^{\sigma _{0}+iT}\frac{(nx)^{-s}ds}{\chi ^{\lambda }(s)}
\notag \\
&&+S_{\lambda \delta }+\delta _{N}+O(\varepsilon _{1}),
\end{eqnarray}%
where
\begin{eqnarray}
\delta _{N} &\ll &\sum\limits_{n>N}d_{\lambda }(n)\int_{0}^{h_{0}}\cdots
\int_{0}^{h_{0}}dh_{1}\cdots dh_{\lambda -1}\int_{(a+h)^{\lambda
}}^{(b+h)^{\lambda }}|f(x)|\int_{-T}^{T}\frac{(n\chi )^{-\sigma _{0}}dt}{%
|\chi ^{\lambda }(\sigma _{0}+it)|}  \notag \\
&\ll &\sum\limits_{n>N}d_{\lambda }(n)n^{-1.1}\int_{-T}^{T}(|t|+1)^{\lambda
(1.1-1/2)}dt\ll \varepsilon _{1})
\end{eqnarray}%
holds for any fixed $T$ as $N$ sufficient large.

Now estimate the integral
\begin{equation}
\delta _{\lambda }(n)=\int_{\sigma _{0}+iT}^{\sigma _{0}+iT_{1}}\frac{%
j_{\lambda }(s)n^{-s}ds}{\chi ^{\lambda }(s)},
\end{equation}%
\begin{equation}
j_{\lambda }(s)=\int_{0}^{h_{0}}\cdots \int_{0}^{h_{0}}dh_{1}\cdots
dh_{\lambda -1}\int_{(a+h)^{\lambda }}^{(b+h)^{\lambda }}f(x)x^{-s}dx
\end{equation}%
for $\sigma _{0}=1.1,\ T_{1}\rightarrow \infty ,\ n\leq N$. Using the
integrations by parts it is easy to obtain that $j_{\lambda }(s)\ll \frac{1}{%
t^{\lambda }}$. Thus, by (2.22),
\begin{equation}
\delta _{\lambda }(n)\leq n^{-1.1}\int_{T}^{T_{1}}t^{\lambda
(1.1-1/2)-\lambda }dt\leq n^{-1.1}\int_{T}^{T_{1}}t^{-0.4\lambda }dt\leq
n^{-1.1}T^{1-0.4\lambda }\leq T^{-0.2}n^{-1.1}
\end{equation}%
for $\lambda \geq 3$. If $\lambda =2$, then
\begin{eqnarray}
j_{2}(s) &=&\int_{0}^{h_{0}}dh\int_{(a+h)^{2}}^{(b+h)^{2}}f(x)x^{-s}dx
\notag \\
&=&\frac{2}{-2s+2}\int_{0}^{h_{0}}dhf((x+h)^{2})(x+h)^{-2s+2}{%
\mbox
{\LARGE \textbar }}_{x=a}^{x=b}  \notag \\
&&-\frac{2}{-2s+2}\int_{0}^{h_{0}}dh\int_{a}^{b}(f((x+h)^{2}))^{\prime
}(x+h)^{-2s+2}dx  \notag \\
&=&\frac{c_{1}}{t^{2}}f((x+h)^{2})(x+h)^{-2s+3}{\mbox {\LARGE \textbar
}}_{x=a}^{x=b}{\mbox {\LARGE \textbar }}_{h=0}^{h=h_{0}}  \notag \\
&&+\frac{c_{2}}{t^{2}}\int_{o}^{h_{0}}dh(f((x+h)^{2}))^{\prime }(x+h)^{-2s+3}%
{\mbox
{\LARGE \textbar }}_{x=a}^{x=b}  \notag \\
&&+\frac{c_{3}}{t^{2}}\int_{o}^{h_{0}}dh(f((x+h)^{2}))^{\prime \prime
}(x+h)^{-2S+3}dx+O(t^{-3}).
\end{eqnarray}%
Consider the integral
\begin{equation}
I_{2n}(T,T_{1})=\int_{T}^{T_{1}}\frac{(n(x+h))^{-it}dt}{t^{2}\chi
^{2}(1.1+it)}=\int_{T}^{T_{1}}t^{-0.8}e^{2it\log \frac{t}{eM_{n}}%
}dt+O(T^{-0.8}),
\end{equation}%
where the last equation is given by (2.22), $\sigma _{0}=1.1$, and
\begin{equation}
M_{n}=2\pi \sqrt{n(x+h)}.
\end{equation}%
If \ $M_{n}>T-\sqrt{T}$, then

\begin{eqnarray*}
I_{2n}(T,T_{1}) &=&(\int\limits_{|t-M_{n}|\leq \sqrt{M_{n}},T\leq t\leq
T_{1}}+\int\limits_{|t-M_{n}|>\sqrt{M_{n}},T\leq t\leq
T_{1}})t^{-0.8}e^{2it\log \frac{t}{eM_{n}}}dt+O(T^{-0.8}) \\
&\ll &M_{n}^{-0.8+1/2}+\frac{T_{1}^{-0.8}}{|\log \frac{T_{1}}{M_{n}}|}%
+\delta _{T}(M_{n})+\frac{M_{n}^{-0.8}}{|\log \frac{M_{n}+\sqrt{M_{n}}}{M_{n}%
}|}+\frac{M_{n}^{-0.8}}{|\log \frac{M_{n}-\sqrt{M_{n}}}{M_{n}}|} \\
&&+\int\limits_{|t-M_{n}|>M_{n},T\leq t\leq T_{1}}(\frac{t^{-0.8-1}}{|\log
\frac{t}{M_{n}}|}+\frac{t^{-0.8-1}}{|\log \frac{t}{M_{n}}|^{2}})dt.
\end{eqnarray*}%
For any fixed $M_{n}$, as $T_{1}\rightarrow \infty $, the second term is $%
\ll T_{1}^{-0.8}\ll T^{-0.8}$. If $M_{n}\leq \sqrt{T}+T$, then $\delta
_{T}(M_{n})$ is the first term. If $M_{n}>\sqrt{T}+T$, then

\begin{equation*}
\delta _{T}(M_{n})\ll \frac{T^{-0.8}}{|\log \frac{T}{M_{n}}|}\ll
T^{-0.8+1/2}.
\end{equation*}%
Thus,

\begin{eqnarray*}
I_{2n}(T,T_{1}) &\ll &T^{-0.8}+M_{n}^{-0.8+1/2}+\int_{T}^{T_{1}}t^{-0.8-1}dt
\\
&&+\int_{\sqrt{M_{n}}/2\leq |t-M_{n}|\leq 2M_{n}}(\frac{M_{n}}{|t-M_{n}|}+%
\frac{M_{n}^{2}}{|t-M_{n}|^{2}})dt \\
&\ll &M_{n}^{-0.8+1/2}+T^{-0.8}\ll T^{-0.3},
\end{eqnarray*}%
for $M_{n}>T-\sqrt{T}$. If $M_{n}\leq T-\sqrt{T}$, then it is easy to show $%
I_{2n}(T,T_{1})\ll T^{-0.3}$. It follows from (2.27), (2.30) and the
definition of $I_{2n}(T,T_{1})$ that $\delta _{2}(n)\ll n^{-1.1}T^{-0.3}$.
This and (2.29) give
\begin{equation}
\delta _{\lambda }(n)\ll n^{-1.1}T^{-0.2}\ll \varepsilon _{1}n^{-1.1},\
\lambda \geq 2.
\end{equation}%
In the same way,
\begin{equation}
\int_{\sigma _{0}-iT_{1}}^{\sigma _{0}-iT}\frac{j_{\lambda }(s)n^{-S}dS}{%
\chi ^{\lambda }(s)}\ll n^{-1.1}T^{-0.2}\ll \varepsilon _{1}n^{-1.1},\
\lambda \geq 2,\ \sigma _{0}=1.1.
\end{equation}%
Therefore
\begin{equation}
\lim\limits_{T_{1}\rightarrow \infty }\sum\limits_{n\leq N}\frac{d_{\lambda
}(n)}{h_{0}^{\lambda -1}}\int_{0}^{h_{0}}\cdots \int_{0}^{h_{0}}dh_{1}\cdots
dh_{\lambda -1}\int_{(a+h)^{\lambda }}^{(b+h)^{\lambda }}(\int_{\sigma
_{0}+iT}^{\sigma _{0}+iT_{1}}+\int_{\sigma _{0}-iT_{1}}^{\sigma _{0}-iT})%
\frac{(nx)^{-s}ds}{\chi ^{\lambda }(s)}\ll \varepsilon _{1}.
\end{equation}%
By (2.25), (2.26) and (2.35),
\begin{eqnarray}
S_{\lambda } &=&\sum\limits_{n\leq N}\frac{d_{\lambda }(n)}{h_{0}^{\lambda
-1}}\lim\limits_{T\rightarrow \infty }\int_{0}^{h_{0}}\cdots
\int_{0}^{h_{0}}dh_{1}\cdots dh_{\lambda -1}\frac{1}{2\pi i}%
\int_{(a+h)^{\lambda }}^{(b+h)^{\lambda }}\frac{f(x)dx}{2\pi i}\int_{\sigma
_{0}-iT}^{\sigma _{0}+iT}\frac{(nx)^{-s}ds}{\chi ^{\lambda }(s)}  \notag \\
&&+S_{\lambda l}+O(\varepsilon _{1}).
\end{eqnarray}

If $s=-1+it,|t|>1$, then by (2.22),
\begin{equation}
|\frac{1}{\chi (s)}|\ll |t|^{\frac{-3}{2}}.
\end{equation}%
Let $c^{\pm }$ be the broken line from $\sigma _{0}\pm iT$ to $-1\pm iT$ and
again to $-1\pm i\infty $, it is easy to know that
\begin{equation}
\int_{0}^{h_{0}}\cdots \int_{0}^{h_{0}}dh_{1}\cdots dh_{\lambda
-1}\int_{(a+h)^{\lambda }}^{(b+h)^{\lambda
}}f(x)dx(\int_{c^{+}}-\int_{c^{-}})\frac{(nx)^{-s}ds}{\chi ^{\lambda }(s)}%
\rightarrow 0(T\rightarrow \infty )
\end{equation}%
uniformly for $n\leq N$. Therefore, by (2.36), (2.38),
\begin{equation}
S_{\lambda }=\sum\limits_{n\leq N}\frac{d_{\lambda }(n)}{h_{0}^{\lambda -1}}%
\int_{0}^{h_{0}}\cdots \int_{0}^{h_{0}}dh_{1}\cdots dh_{\lambda
-1}\int_{(a+h)^{\lambda }}^{(b+h)^{\lambda }}f(x)J_{\lambda
n}(x)dx+S_{\lambda l}+O(\varepsilon _{1})
\end{equation}%
holds for any $\varepsilon _{1}>0$ as $N\geq N(\varepsilon _{1})$, where $%
S_{\lambda l}$ is (2.8) and
\begin{equation}
J_{\lambda n}(x)=\frac{1}{2\pi i}\int_{c}\frac{(nx)^{-s}ds}{\chi ^{\lambda
}(s)},
\end{equation}%
the path $c$ is an infinite broken line joining the points $-1-i\infty
,-1-iT,\alpha _{\lambda }+5/4-iT,\alpha _{\lambda }+5/4+iT,-1+iT,-1+i\infty $%
.

It is know that (for example, (24), P.360 in \cite{[3]})
\begin{equation}
\Gamma (s)=\sqrt{2\pi }e^{(s-1/2)\log s-s+v(s)},
\end{equation}%
where
\begin{equation}
v(s)=\int_{0}^{\infty }\frac{(\{x\}-1/2)dx}{x+s}=\int_{0}^{\infty }\frac{%
\varphi (x)dx}{(x+s)^{2}}
\end{equation}%
for $s\neq 0,-1,-2\cdots ,$
\begin{equation*}
\varphi (x)=-\int_{0}^{x}(\{y\}-1/2)dy=\sum\limits_{m=1}^{\infty }\frac{%
1-\cos 2m\pi x}{2\pi ^{2}m^{2}}.
\end{equation*}%
So that

\begin{eqnarray}
v(s) &=&\sum\limits_{m=1}^{\infty }\frac{1}{2\pi ^{2}m^{2}}\int_{0}^{\infty }%
\frac{(1-\cos 2m\pi x)dx}{(s+x)^{2}}  \notag \\
&=&\frac{1}{s}\sum\limits_{m=1}^{\infty }\frac{1}{2\pi ^{2}m^{2}}%
-\sum\limits_{m=1}^{\infty }\frac{1}{2\pi ^{2}m^{2}}\int_{0}^{\infty }\frac{%
(\cos 2m\pi x)dx}{(s+x)^{2}}  \notag \\
&=&\frac{c_{1}}{s}-\sum\limits_{m=1}^{\infty }\frac{4}{(2\pi m)^{3}}%
\int_{0}^{\infty }\frac{(\sin 2m\pi x)dx}{(s+x)^{3}}  \notag \\
&=&\frac{c_{1}}{s}+\frac{c_{3}}{s^{3}}+\cdots +\frac{c_{2j_{0}+1}}{%
s^{2j_{0}+1}}+O(\frac{1}{s^{2j_{0}+2}})
\end{eqnarray}%
for large $s\neq -m$, where
\begin{eqnarray}
c_{1} &=&\sum\limits_{m=1}^{\infty }\frac{1}{2\pi ^{2}m^{2}}=\frac{\varsigma
(2)}{2\pi ^{2}}=1/12, \\
c_{3} &=&-\sum\limits_{m=1}^{\infty }\frac{4}{(2\pi m)^{4}}=-\frac{\varsigma
(4)}{4\pi ^{4}}=-1/360, \\
c_{5} &=&1/1260, \\
c_{7} &=&-1/1680,\\
&\cdots&\notag
\end{eqnarray}%

By (2.41) and the first equality of (2.22),
\begin{equation*}
\frac{1}{\chi ^{\lambda }(\frac{s}{\lambda }+1/2-\frac{1}{2\lambda })}%
=2^{\lambda }(2\pi )^{-s-\frac{\lambda -1}{2}}\times \ \ \ \ \ \ \ \ \ \ \ \
\ \ \ \ \ \ \ \ \ \ \ \ \ \ \ \ \ \ \ \ \ \ \ \ \ \ \ \ \ \ \ \ \ \ \
\end{equation*}%
\begin{equation*}
\ \ \ \ \ \times \cos ^{\lambda }(\frac{\pi }{2}(\frac{s}{\lambda }+1/2-%
\frac{1}{2\lambda }))(\sqrt{2\pi })^{\lambda }e^{(s-1/2)\log (\frac{s}{%
\lambda }+1/2-\frac{1}{2\lambda })-\lambda (\frac{s}{\lambda }+1/2-\frac{1}{%
2\lambda })+\lambda v(\frac{s}{\lambda }+1/2-\frac{1}{2\lambda })}
\end{equation*}%
\begin{equation*}
\ \ \ =(2\pi )^{-s}(2\cos \frac{\pi }{2}(\frac{s}{\lambda }+1/2-\frac{1}{%
2\lambda }))^{\lambda }\sqrt{2\pi }e^{(s-1/2)\log s-s+v(s)-(s-1/2)\log
\lambda +\mu _{\lambda }(s)}
\end{equation*}%
\begin{equation}
=\sqrt{\lambda }(2\pi \lambda )^{-s}\Gamma (s)e^{\mu _{\lambda }(s)}(2\cos
\frac{\pi }{2}(\frac{s}{\lambda }+1/2-\frac{1}{2\lambda }))^{\lambda },\ \ \
\ \ \ \ \ \ \ \ \ \ \ \ \ \ \ \ \
\end{equation}%
where
\begin{eqnarray}
\mu _{\lambda }(s) &=&(s-1/2)\log (1+\frac{\lambda -1}{2s})-\frac{\lambda -1%
}{2}  \notag \\
&&+\lambda v(\frac{s}{\lambda }+1/2-\frac{1}{2\lambda })-v(s)  \notag \\
&=&\frac{1-\lambda ^{2}}{24s}+\frac{b_{2}^{\lambda }}{s^{2}}+\cdots
\end{eqnarray}%
and%
\begin{eqnarray}
e^{\mu _{\lambda }(s)} &=&1+\frac{1-\lambda ^{2}}{24s}+\frac{\beta
_{2}^{(\lambda )}}{s^{2}}+\cdots  \notag \\
&=&\sum\limits_{\alpha =0}^{\alpha _{\lambda }+1}\frac{\gamma _{\alpha
}^{(\lambda )}}{(s-1)(s-2)\cdots (s-\alpha )}+R_{\lambda }(s).
\end{eqnarray}%
The first term of the above sum is 1 and
\begin{equation}
R_{\lambda }(s)=O((\frac{1}{s-(\alpha _{\lambda }+1)})^{\alpha _{\lambda
}+2}).
\end{equation}%
Moreover,
\begin{equation}
(2\cos \frac{\pi }{2}(\frac{s}{\lambda }+1/2-\frac{1}{2\lambda }))^{\lambda
}=2\cos (\frac{\pi s}{2}+\frac{(\lambda -1)\pi }{4})+\rho _{\lambda }(s),
\end{equation}%
where
\begin{equation}
\rho _{\lambda }(s)=c_{\lambda -2}\cos \frac{\pi }{2}(\frac{(\lambda -2)s}{%
\lambda }+\theta _{\lambda -2})+\cdots +\rho _{1\lambda }(s),
\end{equation}

\begin{equation*}
\rho _{1\lambda }(s)=\left\{
\begin{array}{ll}
c_{10\lambda }, & \lambda =2\lambda _{1} \\
c_{11\lambda }\cos (\frac{\pi s}{2\lambda }+\theta _{1\lambda }), & \lambda
=2\lambda _{1}+1%
\end{array}%
\right.
\end{equation*}%
Therefore, putting $s=\frac{s^{\prime }}{\lambda }+1/2-\frac{1}{2\lambda }$
in the integral (2.40) and moving the integral line return to the original $%
c $, we have
\begin{eqnarray}
J_{n\lambda }(x) &=&2\lambda ^{-1/2}(nx)^{-\frac{1}{2}+\frac{1}{2\lambda }%
}\sum\limits_{\alpha =0}^{\alpha _{\lambda }+1}\frac{\gamma _{\alpha
}^{(\lambda )}}{2\pi i}\int_{c}\frac{(2\pi \lambda (nx)^{\frac{1}{\lambda }%
})^{-s}\Gamma (s)}{(s-1)(s-2)\cdots (s-\alpha )}\times  \notag \\
&&\times \cos (\frac{\pi s}{2}+\frac{(\lambda -1)}{4})ds+J_{n\lambda \delta
}(x),
\end{eqnarray}%
where
\begin{eqnarray*}
J_{n\lambda \delta }(x) &=&\frac{\lambda ^{-1/2}(nx)^{-\frac{1}{2}+\frac{1}{%
2\lambda }}}{2\pi i}\int_{c}\Gamma (s)(2\pi \lambda (nx)^{\frac{1}{\lambda }%
})^{-s}\\
&&\times(e^{\mu _{\lambda }(s)}\rho _{\lambda }(s)
+2R_{\lambda }(s)\cos (\frac{\pi s}{2}+\frac{(\lambda -1)}{4}))ds \\
&=&\frac{\lambda ^{-1/2}(nx)^{-\frac{1}{2}+\frac{1}{2\lambda }}}{2\pi i}%
\int_{\alpha _{\lambda }+5/4-i\infty }^{\alpha _{\lambda }+5/4+i\infty } \\
&\ll &(nx)^{-\frac{1}{2}+\frac{1}{2\lambda }-\frac{\alpha _{\lambda }+5/4}{%
\lambda }}\int_{-\infty }^{\infty }|\Gamma (\sigma _{1}+it)|\\
&&\times(|\rho _{\lambda }(\sigma _{1}+it)|+|R_{\lambda }(\sigma
_{1}+it)
\cos (\frac{\pi }{2}(\sigma _{1}+it)+\frac{\lambda -1}{4})|)dt, \\
\sigma _{1} &=&\alpha _{\lambda }+5/4.
\end{eqnarray*}%
Since%
\begin{equation*}
|\Gamma (\sigma _{1}+it)\cos (\frac{\pi }{2}(\sigma _{1}+it)(1-\nu
_{0})+\theta )|\ \ \ \ \ \ \ \ \ \ \ \ \ \ \ \ \ \ \ \ \ \ \
\end{equation*}

\begin{eqnarray*}
\ \ \ \ \ \ \ \ \ \ &\ll &\left\{
\begin{array}{ll}
|t|^{\sigma _{1}-1/2}, & \nu _{0}=0 \\
|t|^{\sigma _{1}-1/2}e^{-\frac{\pi |t|\nu _{0}}{2}}, & 0<\nu _{0}\leq 1%
\end{array}%
\right. \\
&\ll &\left\{
\begin{array}{ll}
|t|^{\sigma _{1}-1/2}, & \nu _{0}=0 \\
e^{-|t|\nu _{0}}, & 0<\nu _{0}\leq 1%
\end{array}%
\right.
\end{eqnarray*}%
for $|t|\geq 1$, then by (2.51) and (2.53),
\begin{eqnarray}
J_{n\lambda \delta }(x) &\ll &(nx)^{-1/2+\frac{1}{2\lambda }-\frac{\alpha
_{\lambda }+5/4}{\lambda }}\int_{-\infty }^{\infty }((|t|+1)^{-5/4}+e^{-%
\frac{|t|}{\lambda }})dt  \notag \\
&\ll &(nx)^{-1/2+\frac{1}{2\lambda }-\frac{\alpha _{\lambda }+5/4}{\lambda }%
}.
\end{eqnarray}%
Finally, by (2.54) and (2.55), using $\frac{\Gamma (s)}{(s-1)(s-2)\cdots
(s-\alpha )}=\Gamma (s-\alpha )$, we obtain that
\begin{eqnarray}
J_{n\lambda }(x) &=&2\lambda ^{-1/2}(nx)^{-1/2+\frac{1}{2\lambda }%
}\sum\limits_{\alpha =0}^{\alpha _{\lambda }+1}\frac{\gamma _{\alpha
}^{(\lambda )}(nx)^{\frac{-\alpha }{\lambda }}}{2\pi i(2\pi \lambda
)^{\alpha }}\int_{c}(2\pi \lambda (nx)^{\frac{1}{\lambda }})^{-s}\Gamma
(s)  \notag \\
&&\times \cos (\frac{\pi s}{2}+\frac{(\lambda -1)\pi }{4}+\frac{\pi \alpha }{%
2})ds+O((nx)^{-1/2+\frac{1}{2\lambda }-\frac{\alpha _{\lambda }+5/4}{\lambda
}})
\end{eqnarray}%
It is known that Mellin's formula:
\begin{equation}
\frac{1}{2\pi i}\int_{c}x^{-s}\Gamma (s)\cos (\frac{\pi s}{2}+\theta
)ds=\cos (x+\theta ),
\end{equation}%
hence%
\begin{eqnarray}
J_{n\lambda }(x) &=&2\lambda ^{-1/2}(nx)^{-1/2+\frac{1}{2\lambda }%
}\sum\limits_{\alpha =0}^{\alpha _{\lambda }+1}\frac{\gamma _{\alpha
}^{(\lambda )}(nx)^{\frac{-\alpha }{\lambda }}}{(2\pi \lambda )^{\alpha }}%
  \notag \\
&&\times \cos (2\pi \lambda (nx)^{\frac{1}{\lambda }}+\frac{(\lambda -1)\pi
}{4}+\frac{\pi \alpha }{2})+O((nx)^{-1/2+\frac{1}{2\lambda }-\frac{\alpha
_{\lambda }+5/4}{\lambda }})  \notag \\
&=&2\lambda ^{-1/2}(nx)^{-1/2+\frac{1}{2\lambda }}\sum\limits_{\alpha
=0}^{\alpha _{\lambda }}\frac{\gamma _{\alpha }^{(\lambda )}(nx)^{\frac{%
-\alpha }{\lambda }}}{(2\pi \lambda )^{\alpha }}  \notag \\
&&\times \cos (2\pi \lambda (nx)^{\frac{1}{\lambda }}+\frac{(\lambda -1)\pi
}{4}+\frac{\pi \alpha }{2})+O((nx)^{-1/2+\frac{1}{2\lambda }-\frac{\alpha
_{\lambda }+1}{\lambda }}).
\end{eqnarray}

By (2.39 ) and (2.58) we obtain (2.9). The proof of Lemma 2.2 is complete.

\ \

Lemma 2.3 is the case $\lambda =2$ of Lemma 2.2. To evaluate concretely the
first seventh coefficients $\gamma _{0},\cdots ,\gamma _{7}$ in (2.13), it
is sufficient to take $\lambda =2$ in (2.49). We have
\begin{eqnarray*}
\mu (s) &=&\mu _{2}(s)=(s-1/2)\log (1+\frac{1}{2s})-1/2+2v(\frac{s}{2}%
+1/4)-v(s) \\
&=&-\frac{1}{8s}-\frac{1}{16s^{2}}+\frac{1}{192s^{3}}+\frac{5}{128s^{4}}-%
\frac{1}{640s^{5}}-\frac{61}{768s^{6}}+\cdots
\end{eqnarray*}%
and the calculation gives
\begin{equation*}
e^{\mu _{2}(s)}=1+\sum\limits_{\alpha =1}^{\alpha _{0}}\frac{\gamma _{\alpha
}}{(s-1)(s-2)\cdots (s-\alpha )}+R_{2}(s),
\end{equation*}%
where $\gamma _{1},\cdots ,\gamma _{6}$ are (2.13) , $R_{2}(s)$ satisfies
the corresponding (2.51). Proof of Lemma 2.4 is all the same.

It is known that (for example,(3.17) in \cite{[3]})
\begin{equation}
\Delta (x)=\frac{x^{1/4}}{\pi \sqrt{2}}\sum\limits_{n\leq N}d(n)n^{-3/4}\cos
(4\pi \sqrt{nx}-\pi /4)+O(x^{1/2+\varepsilon }N^{-1/2})+O(x^{\varepsilon })
\end{equation}%
for any $\varepsilon >0$. When $x$ is not an integer, (see (15.24), \cite%
{[3]}),%
\begin{eqnarray}
\Delta (x) &=&\frac{x^{1/4}}{\pi \sqrt{2}}\sum\limits_{n=1}^{\infty
}d(n)n^{-3/4}(\cos (4\pi \sqrt{nx}-\pi /4)  \notag \\
&&+\frac{3}{8(4\pi )}(nx)^{-1/2}\cos (4\pi \sqrt{nx}+\pi /4))+O(x^{-3/4}).
\end{eqnarray}

\begin{lem}
As $x\geq 0.9,\  0<h_{0}\ll x$, $N\geq N(\varepsilon_{1})$,
\begin{equation}
\frac{1}{h_{0}}\int_{0}^{h_{0}}\Delta((x+h)^{2})dh=\frac{1}{h_{0}}
\sum\limits_{n\leq N}d(n)\int_{0}^{h_{0}}\phi(n,(x+h)^{2})dh+O(\varepsilon_{1})
\end{equation}
\noindent holds for any $\varepsilon_{1}>0$, where
$$
\phi(n,x)=\frac{x^{1/4}n^{-3/4}}{\sqrt{2}\pi}\sum\limits_{\alpha=0}^{\alpha_{0}}
\frac{\beta_{\alpha}\cos(4\pi\sqrt{nx}-\pi/4+\alpha\pi/2)}{(4\pi\sqrt{nx})^{\alpha}}
$$
\begin{equation}
+O(x^{1/4}n^{-3/4}(nx)^{-\frac{\alpha_{0}+1}{2}}) \ \ \ \ \ \ \ \ \
\ \
\end{equation}
with  $\beta_{0}=1$, $\beta_{1}=3/8$, $\cdots $,
$\beta_{\alpha_{0}}=\cdots$ for any integer number $\alpha_{0}\geq
0$.
\end{lem}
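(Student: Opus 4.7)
The plan is to apply Lemma 2.3 with $f(n)\equiv 1$ and $b=x$, choosing a fixed constant $a\in(0.9,1)$ small enough that the closed interval $[a,a+h_0]$ contains no square root of a positive integer. (If $h_0$ is too large for such a choice, I would first split $[0,h_0]$ into finitely many sub-intervals of bounded length, or invoke Lemma 2.4, since only $h_0\ll x$ is needed and the statement is linear in the integral.) With this choice, $\sum_{a+h<\sqrt n\le x+h}d(n)=D((x+h)^2)-D(a^2)$ is a clean difference independent of $h$ in the lower term, and Lemma 2.3 yields
\begin{equation*}
\tfrac{1}{h_0}\int_0^{h_0}\!\!\bigl[D((x+h)^2)-D(a^2)\bigr]\,dh \;=\; S_l + \sum_{n\le N}\tfrac{d(n)}{h_0}\int_0^{h_0}\!dh\!\int_{(a+h)^2}^{(x+h)^2}\!\!\Theta(nx')\,dx' + O(\varepsilon_1).
\end{equation*}
Evaluating $S_l$ explicitly and using $\Delta(y)=D(y)-y\log y-(2\gamma-1)y$ at $y=(x+h)^2$ and $y=(a+h)^2$, the smooth pieces telescope and I arrive at
\begin{equation*}
\tfrac{1}{h_0}\int_0^{h_0}\!\!\Delta((x+h)^2)\,dh \;=\; \tfrac{1}{h_0}\int_0^{h_0}\!\!\Delta((a+h)^2)\,dh + \sum_{n\le N}\tfrac{d(n)}{h_0}\int_0^{h_0}\!dh\!\int_{(a+h)^2}^{(x+h)^2}\!\!\Theta(nx')\,dx' + O(\varepsilon_1).
\end{equation*}

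Next I would evaluate the inner $x'$-integral by repeated integration by parts. Substituting $u=\sqrt{nx'}$ in the cosine series (2.12) for $\Theta(nx')$ and integrating each term by parts $\alpha_0+1$ times, every step produces boundary contributions at $x'=(x+h)^2$ and $x'=(a+h)^2$, decreases the power of $x'$ by $\tfrac12$, and advances the phase by $\tfrac{\pi}{2}$. A direct verification on the leading term (using $\tfrac{d}{dx'}\sin(4\pi\sqrt{nx'}+\pi/4)=\tfrac{2\pi\sqrt n}{\sqrt{x'}}\cos(4\pi\sqrt{nx'}+\pi/4)$ and the identity $\sin(\theta+\pi/4)=\cos(\theta-\pi/4)$) reproduces the coefficient $\tfrac{y^{1/4}n^{-3/4}}{\sqrt{2}\pi}$ of $\phi$ in (2.62); induction on $\alpha$ reproduces the full expansion with constants $\beta_\alpha$ determined by $\gamma_\alpha$ in (2.13). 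The outcome is
\begin{equation*}
\int_{(a+h)^2}^{(x+h)^2}\!\!\Theta(nx')\,dx' = \phi(n,(x+h)^2) - \phi(n,(a+h)^2) + O\bigl(n^{-1}(nx)^{-\alpha_0/2}\bigr),
\end{equation*}
and the remainder is absorbed into $O(\varepsilon_1)$ by taking $\alpha_0$ large enough.

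Finally I would show that the lower-endpoint contributions cancel the $\Delta((a+h)^2)$ term: applying the already-established formula (2.60) at the bounded argument $(a+h)^2$ gives, for $N\ge N(\varepsilon_1)$ large enough,
\begin{equation*}
\sum_{n\le N} d(n)\phi(n,(a+h)^2) = \Delta((a+h)^2) + O(\varepsilon_1),
\end{equation*}
uniformly in $h\in[0,h_0]$. Integrating against $\tfrac{1}{h_0}\,dh$ and inserting into the displayed identity above yields (2.61). The main obstacle is the uniform control of the Voronoi-type tail $\sum_{n>N}$, since $\phi(n,y)\ll n^{-3/4}$ individually is not summable. The $h$-average provides the missing smoothing: one integration by parts in $h$ on $\cos(4\pi\sqrt n\,(x+h))$ produces a factor $\ll(\sqrt n)^{-1}$, so after the $h$-integration each term contributes $\ll d(n)n^{-5/4+\varepsilon}$, and the tail is $\ll N^{-1/4+\varepsilon}$, smaller than $\varepsilon_1$ for $N\ge N(\varepsilon_1)$. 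The same smoothing tames the analogous tail at $(a+h)^2$, closing the argument.
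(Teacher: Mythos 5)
Your reduction via Lemma 2.3 with a fixed lower endpoint $a\in(0.9,1)$ proves, at best, a two\emph{-}endpoint (difference) formula, and the step that is supposed to remove the lower endpoint is a genuine gap. You claim that (2.60) gives $\sum_{n\le N}d(n)\phi(n,(a+h)^2)=\Delta((a+h)^2)+O(\varepsilon_1)$ uniformly in $h$. It does not: the error in (2.60) is $O(x^{-3/4})$, which at the bounded argument $(a+h)^2\asymp1$ is $O(1)$, and (2.60) supplies only the two terms $\beta_0,\beta_1$, whereas your $\phi$ carries $\alpha_0+1$ terms; identifying the full truncated expansion with $\Delta$ at the fixed point $a$ to within $O(\varepsilon_1)$ is precisely an instance of the lemma you are proving (at a bounded argument), so the step is circular unless you import the exact Bessel-kernel Voronoi identity, which the paper never invokes. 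The same endpoint also poisons the error budget: the $O((nx')^{-3/4-\alpha_0/2})$ term of $\Theta$ in (2.12) and the remainder of your last integration by parts, integrated over $[(a+h)^2,(x+h)^2]$, are dominated by the region $x'\asymp1$ and give per-$n$ contributions of size $n^{-3/4-\alpha_0/2}$, whose sum over $n\le N$ is a constant independent of $N$; this $O(1)$ can be absorbed neither into $O(\varepsilon_1)$ nor into the per-$n$ error of (2.62), which at the argument $(x+h)^2$ is smaller by a power of $x$. Hence for large $x$ (the regime in which the lemma is meant to be used, e.g. in Lemma 2.6 with $h\ll x$) your route cannot reach the stated precision.

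The paper avoids a second endpoint altogether: for $\alpha_0=0,1$ it reads (2.61) off from (2.60) applied at the argument $(x+h)^2$ itself, where $O(((x+h)^2)^{-3/4})$ is harmless and the $h$-average tames the tail (exactly the smoothing you describe, used in the right place); for general $\alpha_0$ it writes $\Delta((x+h)^2)=\frac{1}{2\pi i}\int_{C}(x+h)^{2s}\varsigma^{2}(s)s^{-1}ds$ with the contour dented to the left of $s=1$ (so the integral is $\Delta$, not $D$), and then repeats the proof of Lemma 2.3 --- functional equation, the Stirling expansion of $\chi^{-2}(s)$, Mellin's formula --- with the $h$-average supplying convergence. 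Your integration-by-parts computation of the boundary terms (and the constants $\beta_0=1$, $\beta_1=3/8$) is fine, but to close the argument you would have to replace the appeal to (2.60) at the bounded anchor by that contour-integral (or exact Voronoi) argument, at which point the detour through the difference formula is superfluous. The secondary issue, that Lemma 2.3 requires $h_0<a/2$ while you fix $a<1$ and allow $h_0\ll x$, is repairable via Lemma 2.4 as you note, but it does not touch the main difficulty.
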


\begin{proof} In fact , as $\alpha_{0}=0,1,$ (2.60) leads to (2.61); for the general $\alpha_{0}\geq0$, we have
$$
\Delta((x+h)^{2})=\frac{1}{2\pi
i}\int_{C}\frac{(x+h)^{2s}\varsigma^{2}(s)}{s}ds,
$$
\noindent where the path $C$ is from $1.1-i\infty$ to $1.1-i$ to
$1/2-i$ to $1/2+i$ to $1.1+i$ and to $1.1+i\infty$. The proof is the
same as Lemma 2.3.
\end{proof}

\ \

A corollary of Lemma 2.5 is:

\begin{lem}
 $$
 \int_{h_{1}}^{h_{2}}f(h)\Delta((x+h)^{2})dh \ \ \ \ \ \ \ \ \ \ \ \ \ \ \ \ \ \ \ \ \ \ \ \ \ \ \ \ \ \ \ \ \ \ \ \ \ \ \ \ \ \ \ \ \
 $$
\begin{equation}
=\sum\limits_{n\leq N}d(n)\int_{h_{1}}^{h_{2}}f(h)\phi(n,(x+h)^{2})dh+O(\varepsilon_{1})
\end{equation}
holds for any $\varepsilon_{1}>0$ as $N\geq N(\varepsilon_{1})$,
where $ f(x)\in C^{2}[h_{1}$, $h_{2}]$, $x+h_{1}\geq2$,
$h_{1}<h_{2}\ll x$, $\phi(n,x)$ is  (2.62).
\end{lem}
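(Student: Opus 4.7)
I would reduce to Lemma 2.5 by integration by parts in $h$, using the antiderivatives of both sides as the bridging objects. Set
\begin{equation*}
G(h) = \int_{h_1}^{h} \Delta((x+s)^2)\, ds, \qquad \Phi(h) = \sum_{n \leq N} d(n) \int_{h_1}^{h} \phi(n,(x+s)^2)\, ds,
\end{equation*}
so that $G(h_1) = \Phi(h_1) = 0$. Applying Lemma 2.5 with shifted starting point $x + h_1 \geq 2 > 0.9$ and with its parameter $h_0$ taken as $h - h_1 \in (0, h_2 - h_1]$, then multiplying the resulting identity through by $h - h_1$, yields
\begin{equation*}
G(h) - \Phi(h) = O\bigl((h-h_1)\,\varepsilon_1'\bigr), \qquad h_1 < h \leq h_2,
\end{equation*}
for any prescribed $\varepsilon_1' > 0$ once $N \geq N(\varepsilon_1')$. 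A critical point is that $N(\varepsilon_1')$ can be chosen independently of $h$: inspecting the proof of Lemma 2.2 on which Lemma 2.5 rests, the role of $N$ is purely to make the tail $\sum_{n>N} d(n) n^{-1.1}$ small, and this has no dependence on the auxiliary parameter $h_0$.

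Next I would integrate by parts. Using $f \in C^2[h_1, h_2]$ together with $G(h_1) = \Phi(h_1) = 0$, we have
\begin{equation*}
\int_{h_1}^{h_2} f(h)\, \Delta((x+h)^2)\, dh = f(h_2) G(h_2) - \int_{h_1}^{h_2} f'(h)\, G(h)\, dh,
\end{equation*}
and the completely analogous identity holds with $\Phi$ in place of $G$ and with $\sum_{n \leq N} d(n) \phi(n,\cdot)$ in place of $\Delta$. Subtracting these two identities and inserting the uniform bound on $G - \Phi$ gives
\begin{equation*}
\int_{h_1}^{h_2} f(h)\, \Delta((x+h)^2)\, dh - \sum_{n \leq N} d(n) \int_{h_1}^{h_2} f(h)\, \phi(n,(x+h)^2)\, dh \ll_{f,h_1,h_2} \varepsilon_1'.
\end{equation*}
Rescaling $\varepsilon_1' = \varepsilon_1 / C$ for the implied constant $C = C(f, h_1, h_2)$, and then enlarging $N$ to $N(\varepsilon_1/C)$, delivers the $O(\varepsilon_1)$ bound asserted in (2.63).

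The only nontrivial step, and the main obstacle to verify carefully, is the uniformity claim that a single threshold $N$ serves all members of the one-parameter family of Lemma 2.5 applications indexed by $h$. This is settled by the tail-sum observation above, after which the remainder is a routine integration by parts together with a rescaling of the free parameter $\varepsilon_1$; no new analytic input beyond Lemma 2.5 is required.
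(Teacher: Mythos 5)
Your reduction via the antiderivatives $G(h)$, $\Phi(h)$ and integration by parts is a legitimate strategy, and it is a different route from the paper's (the paper offers no argument at all: Lemma 2.6 is simply declared a corollary of Lemma 2.5, the intended proof being to rerun the proof of Lemma 2.5 with the fixed $C^{2}$ weight $f(h)$ carried inside the $h$-integral). However, the step you yourself single out as critical --- that one threshold $N(\varepsilon_1')$ from Lemma 2.5 serves simultaneously for all $h_0=h-h_1\in(0,h_2-h_1]$ --- is not justified by your tail-sum observation, and the claim that in the proof of Lemma 2.2 the role of $N$ ``has no dependence on $h_0$'' is not accurate. In that proof the Perron-type contour is truncated at height $T$, and the truncation errors (the $\mu\log T$ and $T^{-1}\log(1/\mu)$ contributions feeding into the $O(\log T/T)$ of (2.18)--(2.19)) are divided by $h_0^{\lambda-1}$, so $T$ must be taken large inversely in $h_0$; the tail bound (2.26) is $\delta_N\ll\sum_{n>N}d(n)n^{-1.1}\int_{-T}^{T}(|t|+1)^{0.6\lambda}dt$, so the admissible $N$ depends on $T$ and hence on $h_0$, and it blows up as $h_0\to 0^{+}$. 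Your construction enters exactly this regime, since $h_0=h-h_1\to 0$ as $h\to h_1^{+}$, so the uniform estimate $G(h)-\Phi(h)=O((h-h_1)\varepsilon_1')$ with a single $N$ does not follow from Lemma 2.5 as stated.

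The gap is repairable, but it needs input you have not supplied. One way: split off $h\in(h_1,h_1+\delta]$ and show there that $G(h)$ and $\Phi(h)$ are individually small uniformly in $N$ (for $G$ the trivial bound $\ll(h-h_1)\sup_{s}|\Delta((x+s)^{2})|$ suffices; for $\Phi$ you must use the oscillation of $\phi(n,\cdot)$ in $h$, e.g. $\int_{h_1}^{h}\phi(n,(x+s)^{2})ds\ll x^{1/2}\min\bigl((h-h_1)n^{-3/4},\,n^{-5/4}\bigr)$, which gives $\Phi(h)\ll x^{1/2}(h-h_1)^{1/2-\epsilon}$ independently of $N$), and then verify from the explicit error terms of Lemma 2.2 that for $h_0\geq\delta$ the threshold can be chosen uniformly. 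Alternatively, and more in the spirit of the paper, prove the lemma directly by repeating the proof of Lemmas 2.2--2.3/2.5 with the weight $f(h)$ present: every estimate there handles the $h$-integration by absolute values or by integration by parts in $h$, so a fixed $C^{2}[h_1,h_2]$ weight only changes implied constants and no uniformity issue arises.
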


\ \

In the same way, like the proof of Lemma 2.2, for $0.9<a\leq x\leq b<\infty $%
,
\begin{equation*}
\frac{1}{h_{0}^{\lambda -1}}\int_{0}^{h_{0}}\cdots
\int_{0}^{h_{0}}dh_{1}\cdots dh_{\lambda -1}\Delta _{\lambda
}((x+h)^{\lambda })\ \ \ \ \ \ \ \ \ \ \ \ \ \ \ \ \ \ \ \ \ \ \ \ \ \ \ \ \
\ \ \ \ \ \
\end{equation*}%
\begin{equation*}
\ \ \ \ \ \ \ \ \ \ \ \ \ \ \ =\frac{1}{h_{0}^{\lambda -1}}%
\sum\limits_{n\leq N}d_{\lambda }(n)\int_{0}^{h_{0}}\cdots
\int_{0}^{h_{0}}dh_{1}\cdots dh_{\lambda -1}\phi _{\lambda
}(n,(x+h)^{\lambda })
\end{equation*}%
holds for any $\varepsilon _{1}>0$ as $N\geq N(\varepsilon _{1})$, where%
\begin{equation*}
h=h_{1}+\cdots +h_{\lambda -1},\ 0<h_{0}<a/2,
\end{equation*}%
\begin{eqnarray}
\phi _{\lambda }(n,x) &=&\frac{x^{\frac{1}{2}-\frac{1}{2\lambda }}n^{-\frac{1%
}{2}-\frac{1}{2\lambda }}}{\sqrt{\lambda }\pi }\sum\limits_{\alpha
=0}^{\alpha _{\lambda }}\frac{\beta _{\alpha }^{(\lambda )}\cos (2\pi
\lambda (nx)^{\frac{1}{\lambda }}+(\lambda -3)\pi /4+\alpha \pi /2)}{%
(2\lambda \pi (nx)^{1/\lambda })^{\alpha }}  \notag \\
&&+O(x^{\frac{1}{2}-\frac{1}{2\lambda }}n^{-\frac{1}{2}-\frac{1}{2\lambda }%
}(nx)^{-\frac{\alpha _{\lambda }+1}{\lambda }}),
\end{eqnarray}%
\begin{eqnarray*}
\beta _{0}^{(\lambda )} &=&1, \\
\beta _{1}^{(\lambda )} &=&\frac{\lambda -1}{2}+\frac{1-\lambda ^{2}}{24}, \\
&\cdots &
\end{eqnarray*}%
for any positive integer number $\alpha _{\lambda }$.

\ \

\section{SOME LEMMAS (II)}

\begin{lem} {\rm (see(1.17), \cite{[2]})}. For $n\geq3$,
\begin{equation}d(n)\leq {\rm exp}(O(\frac{\log n}{\log\log n}))=n^{\varepsilon(n)},\  \varepsilon(n)=O(\frac{1}{\log\log n})\end{equation}
\end{lem}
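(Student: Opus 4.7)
The plan is to prove this classical Wigert-type bound by exploiting the multiplicativity of $d$ and splitting the prime factorization of $n$ at a parameter $y$ that will be optimized at the end. Writing $n=p_{1}^{a_{1}}\cdots p_{r}^{a_{r}}$ with the $p_i$ distinct, we have $d(n)=\prod_{i=1}^{r}(a_{i}+1)$ and hence $\log d(n)=\sum_{i=1}^{r}\log(a_{i}+1)$. I would then split this sum into contributions from ``small'' primes $p_{i}\leq y$ and ``large'' primes $p_{i}>y$, and bound each piece by a different argument.

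For the small primes, a trivial bound on each term suffices: since $p_{i}^{a_{i}}\leq n$ we have $a_{i}\leq\log n/\log p_{i}\leq\log n/\log 2$, so $\log(a_{i}+1)\leq\log\log n+O(1)$. There are at most $\pi(y)\leq y$ such primes, yielding the bound $O(y\log\log n)$. For the large primes, the key elementary inequality is $\log(a+1)\leq a\log 2$ for every integer $a\geq 1$ (equivalent to $a+1\leq 2^{a}$, proved by induction). Summing over $p_{i}>y$ and using the identity $\sum_{i}a_{i}\log p_{i}=\log n$, together with $\log p_{i}>\log y$ on this range, gives $\sum_{p_{i}>y}a_{i}<\log n/\log y$, so the large-prime contribution is at most $(\log 2)\log n/\log y$.

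Combining the two pieces yields $\log d(n)\ll y\log\log n+\log n/\log y$, and the final step is to balance the two terms by choosing $y=\log n/(\log\log n)^{2}$; then $\log y=\log\log n+O(\log\log\log n)$, and both contributions become $O(\log n/\log\log n)$. This gives $d(n)\leq\exp(O(\log n/\log\log n))$, which is exactly $n^{\varepsilon(n)}$ with $\varepsilon(n)=O(1/\log\log n)$ since $\log n/\log\log n=(\log n)\cdot(1/\log\log n)$. The only non-routine point is choosing the splitting parameter $y$ correctly; the two estimates themselves are elementary, and everything else is bookkeeping. Extremality is realized (up to the constant) at $n=\prod_{p\leq y}p$, so the exponent $1/\log\log n$ in the bound cannot be improved.
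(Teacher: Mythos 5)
Your proof is correct, and it is worth noting that the paper itself offers no argument for this lemma at all: it is simply quoted from Ivi\'c's book (the reference (1.17) in \cite{[2]}), so there is nothing in the text to compare against except that citation. What you have written is the standard elementary proof of the Wigert-type bound, and it is essentially the argument found in the cited sources: write $\log d(n)=\sum_i\log(a_i+1)$, bound the primes $p_i\leq y$ trivially by $\pi(y)\cdot(\log\log n+O(1))\ll y\log\log n$, bound the primes $p_i>y$ via $\log(a_i+1)\leq a_i\log 2$ and $\sum_{p_i>y}a_i\leq\log n/\log y$, and balance with $y=\log n/(\log\log n)^{2}$ to get $\log d(n)\ll\log n/\log\log n$, i.e.\ $d(n)\leq n^{O(1/\log\log n)}$. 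The only housekeeping point, which the $O$-constant absorbs and which you may want to state explicitly, is that the choice of $y$ and the inequalities $\log y\gg\log\log n$, $\log\log n\gg1$ require $n$ larger than an absolute constant; for the finitely many remaining $n\geq3$ the bound holds by adjusting the implied constant, since $\log\log n>0$ there. Your closing remark about extremality at primorials is accurate (it is Wigert's theorem, with the sharp constant $\log 2$) but is not needed for the lemma.
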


The following Lemma 3.2 is a generalization of Theorem B.1 in \cite{[3]},
also the proof belong to it.

\begin{lem}Let
\begin{equation}\psi(V,b,m_{0},x)=\frac{1}{\sqrt{V}}\sum\limits_{j=-\infty}^{\infty}e^{-\frac{\pi(j+b)^{2}}{V}}e(jx)(j+b)^{m_{0}}\end{equation}
\noindent where ${\rm Re}V>0$, $b$ is real, $x=x_{1}+ix_{2}$,
$m_{0}$ is a negative integer, then
\begin{equation}\psi(V,b,m_{0},x)=\sum\limits_{j=-\infty}^{\infty}e^{-\pi V(j+x)^{2}}e(-b(x+j))p_{m_{0}}(x,j)\end{equation}
\noindent where
\begin{equation*}
p_{m_{0}}(x,j)=\int_{-\infty}^{\infty}e^{-\pi\theta^{2}}(Vi(x+j)+\sqrt{V}\theta)^{m_{0}}d\theta
\ \ \ \ \ \ \ \ \ \ \ \ \ \
\end{equation*}
\begin{equation}
\ \ \ \ \ \ \ \ \ =\sum\limits_{\nu=0}^{[\frac{m_{0}}{2}]}\left(
\begin{array}{clr}m_{0} \\2\nu\end{array}\right)\frac{(2\nu-1)!!V^{\nu}}{(2\pi)^{\nu}}(Vi(x+j))^{m_{0}-2\nu},
\end{equation}
$$(-1)!!=1. \ \ \ \ \ \ \ \ \ \ \ \ \ \ \ \ \ \ \ \ \ \ \ \ \ \ \ \ \ \ \ \ \ \ \ \ \ \ \ \ \ \ \ \ \ \ \ \ \ \ $$
\end{lem}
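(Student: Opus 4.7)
The identity is a weighted Poisson summation for the shifted lattice $\mathbb{Z}+b$. First I substitute $t=j+b$ in (3.2): using $e(jx)=e(-bx)e(tx)$, the left-hand side becomes
$$\psi(V,b,m_{0},x)=\frac{e(-bx)}{\sqrt{V}}\sum_{j\in\mathbb{Z}}F(j+b),\qquad F(t)=t^{m_{0}}e^{-\pi t^{2}/V}e(tx).$$
For the relevant non-trivial case $b\notin\mathbb{Z}$, each sample $t=j+b$ avoids the pole of $F$ at $t=0$, so the series converges absolutely (here is where the hypothesis $m_{0}<0$ first matters: the factor $t^{m_{0}}$ is now singular at the origin).

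Second, I apply Poisson summation in the form $\sum_{j}F(j+b)=\sum_{k}\widehat{F}(k)\,e(kb)$, where $\widehat{F}(k)=\int_{\mathcal{C}}F(t)e^{-2\pi ikt}\,dt$ is taken along a contour $\mathcal{C}$ that indents the real axis around the singularity at $t=0$ (Gaussian decay away from the origin makes the tail integral absolutely convergent). To evaluate $\widehat{F}(k)$ I complete the square
$$-\frac{\pi t^{2}}{V}+2\pi it(x-k)=-\frac{\pi}{V}\bigl(t-iV(x-k)\bigr)^{2}-\pi V(x-k)^{2},$$
and shift $\mathcal{C}$ to the horizontal line $\mathcal{C}':\ t=iV(x-k)+\sqrt{V}\theta$, $\theta\in\mathbb{R}$. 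Choosing the indentation of $\mathcal{C}$ on the same half-plane as $\mathcal{C}'$ (above if $\operatorname{Re}(x-k)>0$, below if $<0$; the vertical case covered by analyticity in $x$) keeps the deformation clear of the pole at $t=0$, yielding
$$\widehat{F}(k)=\sqrt{V}\,e^{-\pi V(x-k)^{2}}\int_{-\infty}^{\infty}e^{-\pi\theta^{2}}\bigl(iV(x-k)+\sqrt{V}\theta\bigr)^{m_{0}}d\theta.$$
Reindexing $k\mapsto -j$ (so $x-k=x+j$) and absorbing the prefactor $V^{-1/2}e(-bx)$ produces exactly the integral form of (3.3).

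Third, for the explicit expression (3.4) I expand $(iV(x+j)+\sqrt{V}\theta)^{m_{0}}$ by the (generalized) binomial theorem in powers of $\sqrt{V}\theta/(iV(x+j))$, kill odd-in-$\theta$ terms by parity, and apply the Gaussian moments $\int_{-\infty}^{\infty}\theta^{2\nu}e^{-\pi\theta^{2}}d\theta=(2\nu-1)!!/(2\pi)^{\nu}$. For $m_{0}\geq 0$ the binomial series terminates at $\nu=[m_{0}/2]$ and reproduces (3.4) literally; in the stated case $m_{0}<0$ the expansion is infinite, and (3.4) is to be read as the asymptotic expansion of $p_{m_{0}}$ in inverse powers of $iV(x+j)$ as $|x+j|\to\infty$, truncated at $\nu=[m_{0}/2]$. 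The integral representation then remains the operative identity.

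The principal obstacle is justifying the contour deformation when $m_{0}<0$: the side of indentation must match $\operatorname{sgn}\operatorname{Re}(x-k)$, and one must verify that the residue contribution at $t=0$ picked up when this sign flips (as $k$ runs over $\mathbb{Z}$) is either zero or cancels across the Poisson sum. A robust alternative, which I would fall back on, is analytic continuation in $m_{0}$: prove (3.3) first for $m_{0}\in\mathbb{Z}_{\geq 0}$ by standard Poisson summation (where $F$ is entire and no indentation is needed), observe that both sides of (3.3) extend to meromorphic functions of $m_{0}\in\mathbb{C}$ with matching singularity structure, and then invoke uniqueness of analytic continuation to conclude the identity at negative integer values of $m_{0}$.
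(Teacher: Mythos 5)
Your core computation is the same as the paper's: both arguments reduce the lemma to a Poisson-type transformation of the shifted Gaussian sum, followed by completing the square, shifting to the horizontal line $z=iV(x+j)+\sqrt{V}\theta$, and evaluating Gaussian moments. The only structural difference is how the Poisson step is obtained. You invoke Poisson summation for $F(t)=t^{m_{0}}e^{-\pi t^{2}/V}e(tx)$ as a known theorem; the paper proves it by hand, integrating $e^{-\pi(z+b)^{2}/V}e(zx)(z+b)^{m_{0}}/(e(z)-1)$ around the rectangle with vertices $\pm(N+1/2)\pm i$, letting $N\to\infty$, expanding $1/(e(z)-1)$ into the two geometric series on the horizontal sides (which creates the dual sum over $j$), then substituting $z=-b+z'$ and moving each line integral to the real axis before completing the square --- i.e.\ the contour-integral proof of the theta transformation (Theorem B.1 in \cite{[3]}) adapted to the extra factor $(z+b)^{m_{0}}$. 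Either route yields the integral form of $p_{m_{0}}(x,j)$, and your parity-plus-Gaussian-moments evaluation of (3.4) is exactly the paper's.

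One caution: most of the extra machinery you add --- indenting the contour around $t=0$, tracking on which side of the pole the shifted line lies, and the fallback analytic continuation in $m_{0}$ --- is aimed at the stated hypothesis that $m_{0}$ is a \emph{negative} integer, which is evidently a slip: formula (3.4), with upper limit $[m_{0}/2]$, coefficients $\binom{m_{0}}{2\nu}$ and the convention $(-1)!!=1$, only makes sense for $m_{0}$ a non-negative integer, and that is the only case the paper uses (it is applied in \S4 and \S7 with $m_{0}=0$, i.e.\ the plain theta relation). For $m_{0}\geq 0$ your $F$ is entire, no indentation is needed, the binomial expansion terminates, and your proof is complete as written. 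For $m_{0}<0$ your fallback would not rescue the literal statement anyway: (3.4) cannot hold as an identity (you yourself downgrade it to an asymptotic expansion), and continuation in $m_{0}$ founders on the branch of $t^{m_{0}}$ at the negative sample points $j+b<0$, so neither side is an obviously analytic function of $m_{0}$. The paper's own proof silently makes the same assumption (it treats $(z+b)^{m_{0}}$ as pole-free inside the rectangle and moves the line onto $z'=0$ without comment); read the lemma with ``non-negative'' and your argument coincides with the paper's.
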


\begin{proof} Let
$$I_{N}=\frac{1}{\sqrt{V}}\int_{C_{N}}\frac{e^{-\frac{\pi(z+b)^{2}}{V}}e(zx)(z+b)^{m_{0}}dz}{e(z)-1}$$
\noindent where $C_{N}$ is the rectangle with vertices at $N+1/2\pm
i,\ -(N+1/2)\pm i$, $N$ is a positive integer, then
$$I_{N}=\frac{1}{\sqrt{V}}\sum\limits_{j=-N}^{N}e^{-\frac{\pi(j+b)^{2}}{V}}e(jx)(j+b)^{m_{0}}\rightarrow\psi(V,b,m_{0},x)\  (N\rightarrow\infty)$$

On the other hand, since
$$\int_{\pm(N+1/2)-i}^{\pm(N+1/2)+i}\frac{e^{-\pi(z+b)^{2}}e(zx)(z+b)^{m_{0}}dz}{e(z)-1}\ll\int_{-1}^{1}\frac{e^{-\varepsilon_{0}N}dy}{1+e^{-2\pi
y}}\rightarrow0\  (N\rightarrow\infty),\ \varepsilon_{0}>0,$$ then
\begin{eqnarray*}
\psi&=&\frac{1}{\sqrt{V}}(\int_{-\infty-i}^{\infty-i}+\int_{-\infty+i}^{\infty+i})\frac{e^{-\frac{\pi(z+b)^{2}}{V}}e(zx)(z+b)^{m_{0}}dz}{e(z)-1}\\
&=&\frac{1}{\sqrt{V}}\sum\limits_{j=-\infty}^{-1}\int_{-\infty-i}^{\infty-i}e^{-\frac{\pi(z+b)^{2}}{V}}e(z(x+j))(z+b)^{m_{0}}dz\\
&&+\frac{1}{\sqrt{V}}\sum\limits_{j=0}^{\infty}\int_{-\infty+i}^{\infty+i}e^{-\frac{\pi(z+b)^{2}}{V}}e(z(x+j))(z+b)^{m_{0}}dz.
\end{eqnarray*}
Putting $z=-b+z'$, and moving the integral line to
$(-\infty,\infty)$, we have
$$
\psi=\sum\limits_{j=-\infty}^{\infty}e(-b(x+j))I_{j},
$$
where
\begin{eqnarray*}
I_{j}&=&\frac{1}{\sqrt{V}}\int_{-\infty}^{\infty}e^{-\frac{\pi
z^{2}}{V}}e(z(x+j))z^{m_{0}}dz\\
&=&\frac{1}{\sqrt{V}}e^{-\pi v
(x+j)^{2}}\int_{-\infty}^{\infty}e^{-\frac{\pi
(z-iV(x+j))^{2}}{V}}z^{m_{0}}dz\\
&=&e^{-\pi V (x+j)^{2}}p_{m_{0}}(x,j),
\end{eqnarray*}
$p_{m_{0}}(x,j)$ is (3.4). The proof is complete.
\end{proof}

\begin{lem} Let $f(x)\in C^{2}(a-\varepsilon,b],\ \varepsilon>0$
\begin{equation}\sigma=\sum\limits_{a\leq n\leq b}f(n),\end{equation}
\begin{equation}M_{\nu}=\max\limits_{a\leq n\leq b}|f^{(\nu)}(x)|,\ \nu=1,2,\end{equation}
then\begin{equation}\sigma=\sum\limits_{-N\leq n\leq
N}\int_{a}^{b}f(x)e(nx)dx+\sigma_{0}(a)+\sigma_{0}(b)+O(\frac{M_{1}+(b-a)M_{2}}{N})\end{equation}
for any natural $N$, where $\sigma_{0}(X)$,  $X=a$  or $b$, such
that the following condition: If $X$ is an integer, then
\begin{equation}
\sigma_{0}(X)= \frac{f(X)}{2};
\end{equation}
if $X$ is not an integer, then
\begin{equation}
\sigma_{0}(X) \ll \frac{1/\{X\}+1/(1-\{X\})}{N}.
\end{equation}
\end{lem}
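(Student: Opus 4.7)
The plan is to recognize Lemma 3.3 as a truncated Poisson summation formula and prove it by combining the classical Euler--Maclaurin/Poisson identity with two integrations by parts to control the Fourier tail.

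First, I would start from the untruncated Poisson summation
\begin{equation*}
\sum_{a \le n \le b}{}' f(n) \;=\; \sum_{m=-\infty}^{\infty} \int_a^b f(x) e(mx)\, dx,
\end{equation*}
valid for $f \in C^2$ on a neighborhood of $[a,b]$; here $\sum'$ gives weight $1/2$ to integer endpoints. This is the level-one Poisson identity already cited as Theorem B.1 of \cite{[3]} in Lemma 3.2. The unweighted sum $\sigma$ of the lemma differs from $\sum'$ precisely by $\tfrac{1}{2}f(a)\mathbf{1}_{a\in\mathbb{Z}}+\tfrac{1}{2}f(b)\mathbf{1}_{b\in\mathbb{Z}}$, which is exactly the integer-endpoint prescription of $\sigma_0(a)+\sigma_0(b)$.

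Second, I would split the sum over $m$ at $|m| \le N$ (the main term of the stated conclusion) versus $|m| > N$. The task then reduces to showing that the Fourier tail
\begin{equation*}
R \;:=\; \sum_{|m|>N} \int_a^b f(x) e(mx)\, dx
\end{equation*}
equals the non-integer boundary contribution to $\sigma_0(a)+\sigma_0(b)$ plus an error $O((M_1+(b-a)M_2)/N)$. For this, integrate by parts twice:
\begin{equation*}
\int_a^b f(x) e(mx)\, dx \;=\; \frac{f(x)e(mx)}{2\pi i m}\bigg|_a^b \;-\; \frac{f'(x) e(mx)}{(2\pi i m)^2}\bigg|_a^b \;+\; \frac{1}{(2\pi i m)^2}\int_a^b f''(x) e(mx)\, dx.
\end{equation*}
Summed over $|m|>N$, the last two pieces are absolutely convergent and, using $\sum_{|m|>N} m^{-2}\ll N^{-1}$, are bounded by $O(M_1/N)$ and $O((b-a)M_2/N)$ respectively.

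Third, handle the remaining (conditionally convergent) level-one boundary tails
\begin{equation*}
T(X) \;:=\; \sum_{|m|>N} \frac{f(X) e(mX)}{2\pi i m}, \qquad X \in \{a,b\}.
\end{equation*}
If $X$ is an integer, $e(mX)=1$ and $T(X)=0$ by the antisymmetry $1/m+1/(-m)=0$; this is exactly why $\sigma_0(X)=f(X)/2$ in the integer case, the contribution coming purely from the Poisson weight identified in paragraph one. If $X$ is not an integer, I would rewrite $T(X)=(f(X)/\pi)\sum_{m>N} \sin(2\pi m X)/m$ and apply Abel summation together with the standard Dirichlet-kernel bound $|\sum_{m=N+1}^{M}\sin(2\pi mX)| \ll 1/|\sin \pi X| \ll 1/\{X\}+1/(1-\{X\})$, obtaining the claimed estimate $\sigma_0(X)\ll (1/\{X\}+1/(1-\{X\}))/N$.

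The only mildly delicate step is the Abel-summation argument used at non-integer endpoints to convert a conditionally convergent tail into the explicit bound in terms of $\{X\}$ and $1-\{X\}$; everything else is routine case-checking determined by whether $a$ and $b$ are integers, together with the two integrations by parts.
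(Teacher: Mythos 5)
Your proposal is correct, but it is organized differently from the paper's own argument. You take the untruncated Poisson identity $\sum' f(n)=\sum_{m}\int_a^b f(x)e(mx)\,dx$ as the starting point, split at $|m|\le N$, and control the tail by two integrations by parts plus Abel summation at the endpoints; the paper never invokes full Poisson summation. Instead it works bottom-up: it writes $\sigma=\int_a^b f\,dx+\int f\,d(\tfrac12-\{x\})$, integrates by parts twice so that $f''$ is paired with the absolutely convergent Fourier expansion $\int_0^x(\tfrac12-\{y\})\,dy=\sum_{n\ge1}\frac{1-\cos 2n\pi x}{2\pi^2n^2}$, truncates that series at $N$ (this is where the $O((M_1+(b-a)M_2)/N)$ error appears), and then integrates back to produce the main term $\sum_{-N\le n\le N}\int_a^b f(x)e(nx)\,dx$ together with the boundary quantities $\sigma_0(a),\sigma_0(b)$, which are finally estimated by exactly the partial-summation/Dirichlet-kernel argument you use, giving $f(X)/2$ at integer endpoints and $\ll N^{-1}(1/\{X\}+1/(1-\{X\}))$ otherwise. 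The trade-off: the paper's route is self-contained and sidesteps the conditional convergence of the full Poisson series, whereas yours is shorter but needs the classical bounded-variation Poisson formula (with half-weights at integer endpoints, interpreted as a symmetric limit) as an external input, and you must — as you in fact do — handle the boundary tails $\sum_{|m|>N}f(X)e(mX)/(2\pi i m)$ by pairing $\pm m$ before estimating. Two small caveats: Theorem B.1 of \cite{[3]}, as used in Lemma 3.2, is a Gaussian (theta-transformation) identity, not the finite-interval Poisson formula you want, so you should cite or prove the classical truncated Poisson summation instead; and the hypothesis $f\in C^2(a-\varepsilon,b]$ enters the paper's proof only through the limit $\varepsilon\to 0_+$ that ensures an integer endpoint $a$ is counted with full weight, a detail your half-weight bookkeeping replaces correctly.
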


\begin{proof} We
have\begin{equation}\sigma=\lim_{\varepsilon\rightarrow0_{+}}\sum\limits_{a-\varepsilon\leq
n\leq
b}f(n)=\lim_{\varepsilon\rightarrow0_{+}}\int_{a-\varepsilon}^{b}f(x)d(x+\frac{1}{2}-\{x\})=\int_{a}^{b}f(x)dx+I_{1},\end{equation}
where
\begin{equation}I_{1}=\lim_{\varepsilon\rightarrow0_{+}}\int_{a-\varepsilon}^{b}f(x)d(\frac{1}{2}-\{x\})=
\lim_{\varepsilon\rightarrow0_{+}}f(x)(\frac{1}{2}-\{x\})\mbox
{\LARGE \textbar }_{a-\varepsilon}^{b}+I_{2},\end{equation}
with
\begin{eqnarray*}
I_{2}&=&-\int_{a}^{b}f'(x)(\frac{1}{2}-\{x\})dx\\
&=&-\int_{a}^{b}f'(x)(\int_{0}^{x}(\frac{1}{2}-\{y\})dy)'dx\\
&=&-f'(x)\int_{0}^{x}(\frac{1}{2}-\{y\})dy\mbox {\LARGE \textbar
}_{a}^{b}+\int_{a}^{b}f''(x)(\int_{0}^{x}(\frac{1}{2}-\{y\})dy)dx.
\end{eqnarray*}
Using
\begin{equation}\int_{0}^{x}(\frac{1}{2}-\{y\})dy=\sum\limits_{n=1}^{\infty}\frac{1-\cos2n\pi
x}{2\pi^{2}n^{2}},\end{equation} we have
$$I_{2}=-f'(x)\sum\limits_{n=1}^{N}\frac{1-\cos2n\pi
x}{2\pi^{2}n^{2}}\mbox {\LARGE \textbar
}_{a}^{b}+\int_{a}^{b}f''(x)\sum\limits_{n=1}^{N}\frac{1-\cos2n\pi
x}{2\pi^{2}n^{2}}dx+O(\frac{M_{1}+(b-a)M_{2}}{N})$$
\begin{equation}=2\sum\limits_{n=1}^{N}\int_{a}^{b}f(x)\cos2n\pi xdx-
\sum\limits_{n=1}^{N}\frac{f(x)\sin2n\pi x}{\pi n}\mbox {\LARGE
\textbar }_{a}^{b}+O(\frac{M_{1}+(b-a)M_{2}}{N}).\end{equation}
Since $2\cos 2\pi x =e(x)+e(-x)$, then (3.7) is obtained by (3.10),
( 3.11) and (3.13),
where\begin{equation}\sigma_{0}(a)=-f(a)(\lim_{\varepsilon\rightarrow0_{+}}(\frac{1}{2}-\{a-\varepsilon\})-\sum\limits_{n=1}^{N}\frac{\sin2n\pi
a}{\pi n}),\end{equation}
\begin{equation}\sigma_{0}(b)=f(b)(\frac{1}{2}-\{b\}-\sum\limits_{n=1}^{N}\frac{\sin2n\pi b}{\pi n}).\end{equation}
Let $X=a$ or $b$, clearly, if $X$ is an integer, then
$\sigma_{0}(X)=\frac{1}{2}f(X)$. If $\{X\}=\frac{1}{2}$, then
$\sigma_{0}(X)$ is vanishing. Suppose $0<\{X\}<\frac{1}{2}$. We have
\begin{eqnarray*}
\frac{1}{2}-\{X\}-\sum\limits_{n=1}^{N}\frac{\sin2n\pi X}{\pi
n}&=&\sum\limits_{n=N+1}^{\infty}\frac{\sin2n\pi \{X\}}{\pi n}\\
&=&\lim\limits_{N_{1}\rightarrow\infty}\frac{1}{\pi}\int_{N+1}^{N_{1}}\frac{1}{u}d\sum\limits_{n\leq
u}\sin2n\pi \{X\}\\
&\ll&\frac{1}{N}(\sum\limits_{n\leq N+1}\sin2n\pi
\{X\}+\int_{N+1}^{\infty}\frac{1}{u^{2}}(\sum\limits_{n\leq
u}\sin2n\pi \{X\}))du\\
&\ll&\frac{1}{N}(\frac{1}{\{X\}}+\frac{1}{1-\{X\}}).
\end{eqnarray*}
The case $\frac{1}{2}<\{X\}<1$  is  the same. The proof is complete.
\end{proof}

\begin{lem} Let
\begin{equation}
H=H(X,\alpha,\beta,a,b)=\sum\limits_{a<n\leq
b}d(n)n^{-\alpha}\cos(4\pi\sqrt{nX}+\beta),
\end{equation}
where $0.9<a<b\leq X(\log X)^C$, $C\geq0$; $\alpha, \beta $ are
real, $X$ is large positive number, then
\begin{equation}H\ll(a^{1/4-\alpha}+b^{1/4-\alpha})X^{1/4+\varepsilon},\end{equation}
where
\begin{equation}\varepsilon=\varepsilon(X)=O(\frac{1}{\log\log
X}).\end{equation}
\end{lem}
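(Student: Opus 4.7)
My plan is to apply Abel summation, bound the routine pieces using Dirichlet's classical estimate $\Delta(x)\ll\sqrt{x}\log x$, and handle the only non-trivial remaining integral by substituting the Voronoi expansion~(2.59) for $\Delta$ and estimating the resulting oscillatory integrals term by term. Throughout, the $(\log X)^C$ growth in $b\le X(\log X)^C$ is absorbed into the $X^{\varepsilon(X)}$ factor via $(\log X)^C=\exp(C\log\log X)\ll X^{1/\log\log X}$, exactly as in the Corollary's proof.

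Writing $g(x)=x^{-\alpha}\cos(4\pi\sqrt{xX}+\beta)$ and $D(x)=x\log x+(2\gamma-1)x+\Delta(x)$, Abel summation followed by one integration by parts on the smooth part of $D$ yields
\[
H \;=\; g(b)\Delta(b)-g(a)\Delta(a)+\int_a^b(\log x+2\gamma)g(x)\,dx-\int_a^b g'(x)\Delta(x)\,dx.
\]
The boundary pieces are controlled by Dirichlet's bound, and the smooth integral by one integration by parts against $\cos(4\pi\sqrt{xX}+\beta)$ (whose phase derivative $2\pi\sqrt{X/x}$ provides the gain factor $\sqrt{x/X}$); both contribute $\ll(a^{1/4-\alpha}+b^{1/4-\alpha})X^{1/4+\varepsilon(X)}$. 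The subleading part $-\alpha x^{-\alpha-1}\cos(4\pi\sqrt{xX}+\beta)$ of $g'(x)$ is handled in the same way. The decisive task is the dominant contribution $-2\pi\sqrt{X}\int_a^b x^{-\alpha-1/2}\sin(4\pi\sqrt{xX}+\beta)\Delta(x)\,dx$: I would substitute the truncated Voronoi expansion~(2.59) at a large truncation $N$ (whose truncation error integrates trivially), and use product-to-sum on $\sin(\cdot)\cos(\cdot)$, so that each term of the series becomes an oscillatory integral with phase $4\pi\sqrt{x}(\sqrt{X}\pm\sqrt{m})$. For the $+$ sign, one integration by parts gains $1/(\sqrt{X}+\sqrt{m})$; together with $d(m)\ll m^{\varepsilon(m)}$ from Lemma~3.1 the sum over $m$ is absolutely convergent and matches the target. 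For the $-$ sign I would split at $|m-X|\asymp\sqrt{X/b}$: for $|m-X|\gg\sqrt{X/b}$, integration by parts gains $1/|\sqrt{X}-\sqrt{m}|$ and a dyadic decomposition gives only a polylogarithmic loss; for $|m-X|\ll\sqrt{X/b}$, the phase varies by only $O(1)$ across $[a,b]$, direct integration gives $O(b^{3/4-\alpha})$ (or $O(a^{3/4-\alpha})$) per term, and combining with the $O(\sqrt{X/b})$ count of close values of $m$ produces the required bound.

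The main obstacle will be the narrow transition window $|m-X|\asymp\sqrt{X/b}$, where integration by parts loses its usual savings and must be smoothly merged with the direct integration bound. A dyadic decomposition in $|m-X|$ combined with Lemma~3.1 is the key tool that makes the two regimes meet without loss, uniformly in $X$ and in the free parameters $a$, $b$, $\alpha$, $\beta$.
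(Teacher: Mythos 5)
Your plan founders on the step you describe as ``whose truncation error integrates trivially.'' After Abel summation the weight multiplying $\Delta(x)$ is $|g'(x)|\asymp\sqrt{X}\,x^{-\alpha-1/2}$, so every \emph{non-oscillatory} error term in whatever expansion of $\Delta$ you substitute gets amplified by $\sqrt{X}$ and then estimated with absolute values. The truncated formula (2.59) carries, besides $O(x^{1/2+\varepsilon}N^{-1/2})$, the irreducible error $O(x^{\varepsilon})$ (it does not vanish as $N\to\infty$; it reflects the jumps of $\Delta$ at integers), and its contribution is $\ll\sqrt{X}\int_a^b x^{-\alpha-1/2+\varepsilon}dx\asymp\sqrt{X}\,(a^{1/2-\alpha}+b^{1/2-\alpha})X^{\varepsilon}$, which exceeds the target $(a^{1/4-\alpha}+b^{1/4-\alpha})X^{1/4+\varepsilon}$ by a factor of order $X^{1/4}\min(a,b)^{1/4}\ge X^{1/4}$ for \emph{every} admissible $a,b$. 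Replacing (2.59) by the convergent expansion (2.60), whose error is $O(x^{-3/4})$ off the integers, still leaves a leak $\ll\sqrt{X}\,(a^{-1/4-\alpha}+b^{-1/4-\alpha})$, which is over budget whenever $a$ and $b$ are both noticeably smaller than $X$: e.g.\ $\alpha=0$, $a\asymp1$, $b=X^{2/5}$ gives leak $\asymp X^{1/2}$ against a target $\asymp X^{0.35+\varepsilon}$, and the trivial bound $H\ll b^{1+\varepsilon}=X^{0.4+\varepsilon}$ does not rescue that range either. So, as written, the proposal proves the lemma only in special ranges of $(a,b,\alpha)$, not as stated.

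The rest of your plan (product-to-sum, first-derivative estimate off the diagonal, trivial estimate near $m=X$, divisor bound and a harmonic sum over $|m-X|$) is exactly the analytic core of the paper's proof, and your worry about the ``transition window'' is a non-issue: taking the minimum of the two bounds term by term, as the paper does with the crude split $|n-X|\le 2$ versus $|n-X|>2$ in (3.27)--(3.31), already suffices. What the paper does differently, and what you need, is to avoid ever differentiating the oscillating factor $\cos(4\pi\sqrt{nX}+\beta)$: it first replaces $H$ by an average over shifts $h\in[0,\tfrac14]$ of the cut points (cost $O(a^{1/2-\alpha+\varepsilon}+b^{1/2-\alpha+\varepsilon})$, acceptable) and then applies its $h$-averaged Voronoi-type summation formula (Lemma 2.3) directly to the weighted sum $\sum d(n)n^{-\alpha}\cos(4\pi\sqrt{nX}+\beta)$, so the only residual errors are an arbitrarily small $O(\varepsilon_1)$ and absolutely summable tails, with no $\sqrt{X}$-amplified $O$-term anywhere. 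If you insist on the Abel-summation skeleton you must restructure it (e.g.\ integrate by parts once more, or work with an expansion of $\int\Delta$) so that no pointwise $O$-term meets the factor $\sqrt{X}$; without such a repair the argument has a genuine gap.
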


\begin{proof} Let $h_{0}=\frac{1}{4}$ and
\begin{equation}H'=\frac{1}{h_{0}}\int_{0}^{h_{0}}dh\sum\limits_{\sqrt{a}+h<\sqrt{n}\leq
\sqrt{b}+h}d(n)n^{-\alpha}\cos(4\pi\sqrt{nX}+\beta),\end{equation}

\noindent then from Lemma 3.1
\begin{eqnarray*}
H-H'&=&\frac{1}{h_{0}}\int_{0}^{h_{0}}dh(\sum\limits_{\sqrt{a}<\sqrt{n}\leq
\sqrt{b}}- \sum\limits_{\sqrt{a}+h<\sqrt{n}\leq
\sqrt{b}+h})d(n)n^{-\alpha}\cos(4\pi\sqrt{nX}+\beta)\\
&\ll&\frac{1}{h_{0}}\int_{0}^{h_{0}}(\sum\limits_{|\sqrt{n}-\sqrt{a}|\leq
h}+\sum\limits_{|\sqrt{n}-\sqrt{b}|\leq h})d(n)n^{-\alpha}\\
&\ll&
a^{\frac{1}{2}-\alpha+\varepsilon}+b^{\frac{1}{2}-\alpha+\varepsilon},
\end{eqnarray*}
 \noindent where $\varepsilon=\varepsilon(X)$ is (3.18). So that
\begin{equation}H=\frac{1}{h_{0}}\int_{0}^{h_{0}}dh\sum\limits_{\sqrt{a}+h<\sqrt{n}\leq
\sqrt{b}+h}d(n)n^{-\alpha}\cos(4\pi\sqrt{nX}+\beta)+O(a^{\frac{1}{2}-\alpha+\varepsilon}+b^{\frac{1}{2}-\alpha+\varepsilon})\end{equation}
Taking $f(n)=n^{-\alpha}\cos(4\pi\sqrt{nX}+\beta)$ in Lemma 2.3, we
have
\begin{equation}H=H_{l}+H_{\Delta
N}+O(\varepsilon_{1})+O(a^{\frac{1}{2}-\alpha+\varepsilon}+b^{\frac{1}{2}-\alpha+\varepsilon})\end{equation}
\noindent holds for any $\varepsilon_{1}>0$ as $N$ large, where
\begin{eqnarray*}
 H_{l}&=&\frac{1}{h_{0}}\int_{0}^{h_{0}}dh\int_{(\sqrt{a}+h)^{2}}^{(\sqrt{b}+h)^{2}}u^{-\alpha}(\log
 u+2\gamma)\cos(4\pi\sqrt{uX}+\beta)du\\
 &=&\frac{2}{h_{0}}\int_{0}^{h_{0}}dh\int_{\sqrt{a}+h}^{\sqrt{b}+h}u^{1-2\alpha}(\log
 u^{2}+2\gamma)\cos(4\pi\sqrt{X}u+\beta)du
 \end{eqnarray*}
 \begin{equation}
 \ll(a^{\frac{1}{2}-\alpha+\varepsilon}+b^{\frac{1}{2}-\alpha+\varepsilon})/\sqrt{X},\
 \ \ \ \ \ \ \ \ \ \ \ \ \ \ \ \ \ \ \ \ \ \ \ \ \ \ \ \ \ \ \ \ \ \
 \ \ \
 \end{equation}
$$H_{\Delta N}=\sum\limits_{n\leq N}\frac{d(n)}{h_{0}}\int_{0}^{h_{0}}dh
\int_{(\sqrt{a}+h)^{2}}^{(\sqrt{b}+h)^{2}}u^{-\alpha}\cos(4\pi\sqrt{uX}+\beta)\Theta(nu)du$$
\begin{equation}=2\sqrt{2}\sum\limits_{n\leq N}d(n)n^{-\frac{1}{4}}\sum\limits_{\nu=0}^{1}
\frac{\gamma_{\nu}}{(4\pi\sqrt{n})^{\nu}}j_{n\nu}+\delta_{\Delta
N},\ \ \ \ \ \ \ \ \ \ \ \ \ \end{equation}
\begin{equation}\delta_{\Delta
N}\ll\frac{1}{h_{0}}\sum\limits_{n\leq
N}d(n)n^{-\frac{1}{4}}\int_{0}^{h_{0}}dh\int_{(\sqrt{a}+h)^{2}}^{(\sqrt{b}+h)^{2}}\frac{u^{-\alpha-1}}{n}du\ll
a^{-\alpha}+b^{-\alpha},\end{equation}
$$\ \ \ \ \ \ \ \ \ \ \ \ \ \ \ j_{n\nu}=\frac{1}{h_{0}}\int_{0}^{h_{0}}dh
\int_{\sqrt{a}+h}^{\sqrt{b}+h}u^{\frac{1}{2}-2\alpha-\nu}\cos4\pi(\sqrt{X}u+\beta)
\cos(4\pi\sqrt{n}u+\frac{\pi}{4}+\frac{\pi\nu}{2})du$$
\begin{equation}=j_{n\nu}^{+}+j_{n\nu}^{-}, \ \ \ \ \ \ \ \ \ \ \ \ \ \ \ \ \ \ \ \ \ \ \ \ \ \ \ \ \ \
\ \ \ \ \ \ \ \ \ \ \ \ \ \ \ \ \ \ \ \ \ \ \ \ \ \ \ \ \
\end{equation}
\begin{eqnarray*}
j_{n\nu}^{\pm}&=&\frac{1}{2h_{0}}\int_{0}^{h_{0}}dh\int_{\sqrt{a}+h}^{\sqrt{b}+h}u^{\frac{1}{2}-2\alpha-\nu}
\cos(4\pi(\sqrt{n}\pm\sqrt{X})u+\theta^{\pm})du\\
&=&\frac{1}{2h_{0}}\int_{0}^{h_{0}}\frac{(u+h)^{\frac{1}{2}-2\alpha-\nu}
\sin(4\pi(\sqrt{n}\pm\sqrt{X})(u+h)+\theta^{\pm})}{4\pi(\sqrt{n}\pm\sqrt{X})}\mbox
{\LARGE \textbar }_{\sqrt{a}}^{\sqrt{b}}
\end{eqnarray*}
\begin{equation}-\frac{c}{2h_{0}}\int_{0}^{h_{0}}\frac{dh}{\sqrt{n}\pm\sqrt{X}}
\int_{\sqrt{a}}^{\sqrt{b}}(u+h)^{-\frac{1}{2}-2\alpha-\nu}\sin(4\pi(\sqrt{n}\pm\sqrt{X})(u+h)+\theta^{\pm})du\end{equation}
for $\sqrt{n}\neq\sqrt{X}.$ If $n>2X$, then integrating by parts for
$h$, we have
\begin{equation}j_{n\nu}^{\pm}\ll\frac{a^{\frac{1}{4}-\alpha}+b^{\frac{1}{4}-\alpha}}{(\sqrt{n}\pm\sqrt{X})^{2}}
\ll\frac{a^{\frac{1}{4}-\alpha}+b^{\frac{1}{4}-\alpha}}{n}.\end{equation}
Therefore, it follows from (3.23), (3.24) and (3.27) that
\begin{equation}H_{\Delta N}=2\sqrt{2}\sum\limits_{n\leq
2X}d(n)n^{-\frac{1}{4}}\sum\limits_{\nu=0}^{1}
\frac{\gamma_{\nu}}{(4\pi\sqrt{n})^{\nu}}(j_{n\nu}^{+}+j_{n\nu}^{-})+O(a^{\frac{1}{4}-\alpha}+b^{\frac{1}{4}-\alpha}).\end{equation}
If $|n-X|\leq2$, then by (3.25),
\begin{equation}j_{n\nu}^{\pm}\ll\int_{\sqrt{a}}^{\sqrt{b}+1}u^{\frac{1}{2}-2\alpha}du\ll
a^{\frac{3}{4}-\alpha}+b^{\frac{3}{4}-\alpha}.\end{equation}
If
$|n-X|>2$, then by (3.26),
\begin{equation}j_{n\nu}^{\pm}\ll\frac{1}{|\sqrt{n}-\sqrt{X}|}(a^{\frac{1}{4}-\alpha}+b^{\frac{1}{4}-\alpha}).\end{equation}
Hence, by (3.27), (3.28), (3.29) and Lemma 3.1 we obtain
\begin{equation*}
H_{\Delta N}\ll\sum\limits_{n\leq
2X,|n-X|>2}\frac{d(n)n^{-\frac{1}{4}}}{|\sqrt{n}-\sqrt{X}|}(a^{1/4-\alpha}+b^{1/4-\alpha})+
\sum\limits_{|n-X|\leq2}d(n)n^{-\frac{1}{4}}(a^{3/4-\alpha}+b^{3/4-\alpha})
\end{equation*}
\begin{equation*}
\ll\sum\limits_{n\leq\frac{1}{2}X}d(n)n^{-\frac{1}{4}}\frac{a^{1/4-\alpha}+b^{1/4-\alpha}}{\sqrt{X}}
\ \ \ \ \ \ \ \ \ \ \ \ \ \ \ \ \ \ \ \ \ \ \ \ \ \ \ \ \ \ \ \ \ \
\ \ \ \ \ \ \ \ \ \ \ \ \ \ \ \ \ \ \ \ \ \
\end{equation*}
\begin{equation*}
+(a^{1/4-\alpha}+b^{1/4-\alpha})X^{\frac{1}{4}+\varepsilon(X)}\sum\limits_{X/2<n\leq
2X,|n-X|>2}\frac{1}{|n-X|}+(a^{3/4-\alpha}+b^{3/4-\alpha})X^{-\frac{1}{4}+\varepsilon(X)}
\end{equation*}
\begin{equation}\ll(a^{1/4-\alpha}+b^{1/4-\alpha})X^{\frac{1}{4}+\varepsilon(X)} \ \ \ \ \ \ \ \ \
\ \ \ \ \ \ \ \ \ \ \ \ \ \ \ \ \ \ \ \ \ \ \ \ \ \ \ \ \ \ \ \ \ \ \ \ \ \ \ \ \ \ \ \ \end{equation}
\noindent for $a<b\ll (\log X)^C$. (3.20), (3.21), (3.22) and (3.31)
implicate (3.17). The proof is complete.
\end{proof}

\begin{lem}
 Suppose $a,\ b,\ \alpha,\ \beta,\ X$ be as Lemma
3.4, $$H=\sum\limits_{a<n\leq
b}d(n)n^{-\alpha}f(n)\cos(4\pi\sqrt{nX}+\beta),\ f(u)\ll M,$$
\begin{equation}f'(u)\ll\frac{M}{\sqrt{Xu}},u\in[a,b+1],\end{equation}
\noindent then \begin{equation}H\ll
M(a^{1/4-\alpha}+b^{1/4-\alpha})X^{\frac{1}{4}+\varepsilon(X)}.\end{equation}
\end{lem}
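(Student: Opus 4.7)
The plan is to reduce Lemma 3.5 to Lemma 3.4 by Abel (partial) summation, using the hypotheses $f \ll M$ and $f'(u) \ll M/\sqrt{Xu}$ to control the resulting boundary and integral terms.

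Set
\begin{equation*}
S(t) = \sum_{a < n \le t} d(n) n^{-\alpha} \cos(4\pi\sqrt{nX} + \beta),
\end{equation*}
so that Lemma 3.4 applied on $(a, t]$ for any $t \in (a, b]$ gives
\begin{equation*}
S(t) \ll \bigl(a^{1/4-\alpha} + t^{1/4-\alpha}\bigr) X^{1/4 + \varepsilon(X)}.
\end{equation*}
By partial summation,
\begin{equation*}
H = f(b) S(b) - \int_a^b f'(t) S(t)\, dt.
\end{equation*}

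The boundary term $f(b) S(b)$ is immediately $\ll M (a^{1/4-\alpha} + b^{1/4-\alpha}) X^{1/4+\varepsilon(X)}$, which is of the claimed form. For the integral, the bound $|f'(t)| \ll M/\sqrt{Xt}$ and the estimate for $S(t)$ give
\begin{equation*}
\int_a^b |f'(t)|\, |S(t)|\, dt \ll M X^{-1/4+\varepsilon(X)} \int_a^b t^{-1/2} \bigl(a^{1/4-\alpha} + t^{1/4-\alpha}\bigr)\, dt.
\end{equation*}
The first piece is $a^{1/4-\alpha} \int_a^b t^{-1/2}\, dt \ll a^{1/4-\alpha} \sqrt{b}$; since $b \le X (\log X)^C$, the factor $\sqrt{b}/X^{1/2}$ is absorbed in $X^{\varepsilon(X)}$, yielding $M X^{1/4+\varepsilon(X)} a^{1/4-\alpha}$. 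The second piece is $\int_a^b t^{-1/4-\alpha}\, dt$, which is $\ll b^{3/4-\alpha}$ when $\alpha < 3/4$, $\ll \log X$ when $\alpha = 3/4$, and $\ll a^{3/4-\alpha}$ when $\alpha > 3/4$; in every case one writes $t^{3/4-\alpha} = t^{1/4-\alpha} \cdot t^{1/2}$ and uses $t \le b \le X(\log X)^C$ to absorb $\sqrt{t}/\sqrt{X}$ into $X^{\varepsilon(X)}$, producing $M X^{1/4+\varepsilon(X)} (a^{1/4-\alpha} + b^{1/4-\alpha})$.

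The only step that requires any care is verifying that the boundary values of the $t$-integral in the regimes $\alpha \ge 3/4$ are still dominated by $M X^{1/4+\varepsilon(X)}(a^{1/4-\alpha} + b^{1/4-\alpha})$; since $a > 0.9$ the factor $a^{3/4-\alpha}/X^{1/2} \le a^{1/4-\alpha} \cdot (a/X)^{1/2}$ is trivially small. Combining the boundary and integral estimates gives (3.33), which completes the proof.
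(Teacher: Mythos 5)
Your proposal is correct and follows essentially the same route as the paper: the paper's proof is exactly a partial (Riemann--Stieltjes) summation of $H$ against the partial sums controlled by Lemma 3.4, with the boundary term bounded by $f(b)\ll M$ and the integral term bounded via $f'(u)\ll M/\sqrt{Xu}$, the factor $(a^{1/4-\alpha}+u^{1/4-\alpha})$ being absorbed into $(a^{1/4-\alpha}+b^{1/4-\alpha})$ and the remaining $\int_a^b du/\sqrt{Xu}\ll\sqrt{b/X}$ swallowed by $X^{\varepsilon(X)}$. Your case analysis in $\alpha$ is a slightly more laborious but equivalent way of carrying out that same absorption.
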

\begin{proof} In fact, by Lemma 3.4,
\begin{eqnarray*}
H&=&\int_{a}^{b}f(u)d\sum\limits_{a<n\leq
u}d(n)n^{-\alpha}f(n)\cos(4\pi\sqrt{nX}+\beta) \\
&\ll&|f(b)||\sum\limits_{a<n\leq
b}d(n)n^{-\alpha}f(n)\cos(4\pi\sqrt{nX}+\beta)|\\
 &&+\int_{a}^{b}|f'(u)||\sum\limits_{a<n\leq
u}d(n)n^{-\alpha}f(n)\cos(4\pi\sqrt{nX}+\beta)|du\\
&\ll&M(a^{1/4-\alpha}+b^{1/4-\alpha})X^{\frac{1}{4}+\varepsilon(X)}(1+\int_{a}^{b}\frac{du}{\sqrt{Xu}})\\
&\ll&M(a^{1/4-\alpha}+b^{1/4-\alpha})X^{\frac{1}{4}+\varepsilon(X)}.
\end{eqnarray*}
The proof is complete.
\end{proof}

\ \

Let
\begin{equation*}
\eta=\eta_{1}+\cdots+\eta_{K},
\end{equation*}
define
\begin{equation}
f(\eta)\mbox {\LARGE \textbar
}_{\eta_{k}}=f(\eta_{1}+\cdots+\eta_{K})\mbox {\LARGE \textbar
}_{\eta_{1}=0}^{\eta_{1}=\nu_{0}}\cdots\mbox {\LARGE \textbar
}_{\eta_{K}=0}^{\eta_{K}=\nu_{0}}
\end{equation}
Also we define
\begin{equation*}
f(\eta_{1})|\mbox {\LARGE \textbar
}_{\eta_{1}=0}^{\eta_{1}=\nu_{0}}|=|f(\nu_{0})|+|f(0)|
\end{equation*}
generally,%
\begin{equation}
f(\eta)|\mbox {\LARGE \textbar
}_{\eta_{K}}|=\sum\limits_{\eta_{1}=0,\nu_{0}}\cdots\sum\limits_{\eta_{K}=0,%
\nu_{0}}|f(\eta)|
\end{equation}
It is obvious that if $f(\eta)\ll M$, then
\begin{equation}
f(\eta)\mbox {\LARGE \textbar }_{\eta_{K}}\ll f(\eta)|%
\mbox {\LARGE
\textbar }_{\eta_{K}}|\ll M2^{K};
\end{equation}
\noindent if $f^{(k)}(\eta)\ll M,\ 0\leq k\leq K$, then

\begin{eqnarray}
f(\eta )\mbox {\LARGE \textbar }_{\eta _{K}} &=&(\int_{0}^{\nu _{0}}\cdots
\int_{0}^{\nu _{0}}f^{(k)}(\eta )d\eta _{1}\cdots d\eta _{K})%
\mbox
{\LARGE \textbar }_{\eta _{k+1}=0}^{\eta _{k+1}=\nu _{0}}\cdots
\mbox
{\LARGE \textbar }_{\eta _{K}=0}^{\eta _{K}=\nu _{0}}  \notag \\
&\ll &M2^{K-k}\int_{0}^{\nu _{0}}\cdots \int_{0}^{\nu _{0}}d\eta _{1}\cdots
d\eta _{K}=M2^{K-k}\nu _{0}^{k}.
\end{eqnarray}

\begin{lem}
 Suppose $f(z)$ is analytic for $|z|\leq\frac{1}{\sqrt{L}}$,
and $|f(z)|\leq M$, then
\begin{equation}f(\eta)\mbox {\LARGE \textbar
}_{\eta_{k}}=f(\eta_{1}+\cdots+\eta_{K})\mbox {\LARGE \textbar
}_{\eta_{1}=0}^{\eta_{1}=\nu_{0}}\cdots\mbox {\LARGE \textbar
}_{\eta_{K}=0}^{\eta_{K}=\nu_{0}}\ll MU^{-99}\end{equation}
\noindent for large U, $L\asymp\log U$, $0<\nu_{0}\leq L^{-2},\
K=[\frac{C_{0}L}{\log L}],\ C_{0}\geq200$.
\end{lem}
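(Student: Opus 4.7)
The plan is to recognize the iterated bar operation as a $K$-fold forward difference of a single-variable analytic function and then bound it via Cauchy's integral formula. Since $f(\eta_1+\cdots+\eta_K)$ depends on the $\eta_j$ only through their sum $s = \eta_1+\cdots+\eta_K$, each evaluation $\mbox{\LARGE \textbar}_{\eta_j=0}^{\eta_j=\nu_0}$ acts exactly as the forward-difference operator $\Delta_{\nu_0}$ applied to the one-variable function $g(s) := f(s)$, where $\Delta_h g(s) = g(s+h) - g(s)$. Iterating $K$ times gives the identity
$$f(\eta)\mbox{\LARGE \textbar}_{\eta_K} \;=\; (\Delta_{\nu_0}^K g)(0).$$

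Next I would pass to the standard integral representation for an iterated difference of an analytic function,
$$(\Delta_{\nu_0}^K g)(0) \;=\; \nu_0^K \int_0^1 \!\!\cdots\!\! \int_0^1 g^{(K)}\!\bigl(\nu_0(t_1+\cdots+t_K)\bigr)\,dt_1\cdots dt_K.$$
This is legitimate since $K\nu_0 \le C_0/(L\log L)$ is much smaller than the radius of analyticity $1/\sqrt{L}$, so the segment $[0,K\nu_0]$ lies well inside the disk $|z|\le 1/\sqrt{L}$. The problem therefore reduces to bounding $|g^{(K)}(s)|$ for $|s|\le K\nu_0$.

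For this I would apply Cauchy's estimate on a circle of radius $\rho = 1/\sqrt{L} - K\nu_0 \ge 1/(2\sqrt{L})$ (valid for $L$ large), giving $|g^{(K)}(s)| \le K!\,M\,\rho^{-K} \le K!\,M\,(2\sqrt{L})^K$. Combining with the factor $\nu_0^K \le L^{-2K}$ and using the crude $K! \le K^K$ with $K\le C_0 L/\log L$, I get
$$\Big|f(\eta)\mbox{\LARGE \textbar}_{\eta_K}\Big| \;\le\; M\,K!\,(2\sqrt{L}\,\nu_0)^K \;\le\; M\left(\frac{2C_0}{\sqrt{L}\,\log L}\right)^{K}.$$

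A short logarithmic calculation finishes the job: $K\log\bigl(2C_0/(\sqrt{L}\log L)\bigr) = -\tfrac{1}{2}K\log L\,(1+o(1)) = -\tfrac{1}{2}C_0 L\,(1+o(1))$, so the bound becomes $M\exp(-\tfrac{1}{2}C_0 L\,(1+o(1))) = M\,U^{-C_0/2 + o(1)}$, which is comfortably $\ll M\,U^{-99}$ once $C_0 \ge 200$. The main structural point — that the $K$-fold bar is a $K$-fold finite difference, contributing $\nu_0^K$ multiplied by a high-order derivative — is what generates the exponential savings. The only genuine point to check is that $K\nu_0$ remains safely inside the radius of analyticity so that Cauchy's inequality applies with $\rho \asymp L^{-1/2}$; this is immediate from the parameter choices, and I do not anticipate any other obstacle.
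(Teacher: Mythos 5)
Your proposal is correct and follows essentially the same route as the paper: the paper also writes the $K$-fold bar as $\int_0^{\nu_0}\!\cdots\!\int_0^{\nu_0} f^{(K)}(\eta)\,d\eta_1\cdots d\eta_K$, bounds $f^{(K)}$ by the Cauchy integral formula on the circle $|z|=1/\sqrt{L}$ (your Cauchy estimate with $\rho\asymp L^{-1/2}$ is the same bound), and then uses $K!\,\nu_0^K(2\sqrt{L})^K\le M(2KL^{-3/2})^K$ with $K=[C_0L/\log L]$ to get $MU^{-99}$. The only cosmetic difference is that you phrase the first step as an iterated forward difference of the one-variable function before scaling, which is exactly the paper's identity in different notation.
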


\begin{proof}
Clearly,
\begin{equation}0\leq\eta_{1}+\cdots+\eta_{K}\leq\nu_{0}K\ll\frac{1}{L\log
L}.\end{equation}
Moreover,
\begin{eqnarray*}
f(\eta)\mbox {\LARGE \textbar
}_{\eta_{K}}&=&\int_{0}^{\nu_{0}}\cdots\int_{0}^{\nu_{0}}f^{(k)}(\eta)d\eta_{1}\cdots
d\eta_{K}\\
&=&\int_{0}^{\nu_{0}}\cdots\int_{0}^{\nu_{0}}d\eta_{1}\cdots
d\eta_{K}\frac{K!}{2\pi
i}\int_{|z|=\frac{1}{\sqrt{L}}}\frac{f(z)dz}{(z-\eta)^{K+1}}\\
&\ll&
K!\nu_{0}^{K}\int_{|z|=\frac{1}{\sqrt{L}}}\frac{M|dz|}{(\frac{1}{\sqrt{L}}-\frac{1}{L\log
L})^{K+1}}\\
&\ll& K!L^{-2K}M(2\sqrt{L})^{K}\leq M(2KL^{-3/2})^{K}\\
&=&Me^{-K\log\frac{L^{3/2}}{2K}}\\
&\ll& M {\rm exp}(-\frac{200L}{\log L}(\log L^{1/2}+O(\log\log L)))\\
&\ll& MU^{-99},
\end{eqnarray*}
for $L\asymp\log U$.
The proof is complete.
\end{proof}

\ \

\section{SOME LEMMAS \ (III)}

\begin{lem} Let
\begin{equation}
G_{\lambda}(\sqrt{n},\eta)=e^{-\frac{4\pi(\sqrt{n}+\lambda(\sqrt{U}+\eta+\nu))^{2}}{A^{2}}},
\end{equation}
\begin{equation}
W_{\lambda}(\eta)=e(-\lambda(\sqrt{U}+\eta+\nu)^{2}),
\end{equation}
\begin{equation}
 S_{\lambda}(\eta)=W_{\lambda}(\eta)
\sum\limits_{\sqrt{n}\leq\sqrt{U}}d(n)n^{-\frac{1}{4}}G_{\lambda}(\sqrt{n},\eta)
\sum\limits_{\alpha=0}^{2}\frac{\gamma_{\alpha}(-i)^{\alpha}e(-2\sqrt{n}(\sqrt{U}+\eta+\nu)-1/8)}{(4\pi\sqrt{n}(\sqrt{n}+\sqrt{U}+\eta+\nu))^{\alpha}},
\end{equation}
\begin{equation}
S_{\lambda}^{*}(\eta)=W_{\lambda}(\eta)
\sum\limits_{\sqrt{U}<\sqrt{n}
\leq(\lambda_{0}-\lambda)\sqrt{U}}d(n)n^{-\frac{1}{4}}G_{\lambda}(\sqrt{n},\eta)
 e(-2\sqrt{n}(\sqrt{U}+\eta+\nu)-1/8),
\end{equation}
where $\nu=O(L^{-1})$ is real, $A$ is (1.10), $\eta$ is (1.15),
 \begin{equation}1\leq\lambda_{0}\ll L,\  0\leq \lambda\leq\lambda_{0}-1.\end{equation}
Then
\begin{equation}S_{0}^{*}(\eta)=\sum\limits_{\lambda=1}^{\lambda_{0}-1}S_{\lambda}(\eta)+O(U^{-3/4+\varepsilon}),\ \varepsilon=O(\frac{1}{\log
L}).\end{equation} (note that $O_{\eta}$  is (1.28).)
\end{lem}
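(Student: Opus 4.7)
The plan is to apply the Voronoi-type summation formula (Lemma 2.4) directly to $S_0^*(\eta)$, viewed as $\sum d(n) f(n)$ with
$$
f(n) = n^{-1/4}\, G_0(\sqrt{n},\eta)\, e(-2\sqrt{n}B - 1/8), \qquad B = \sqrt{U}+\eta+\nu,
$$
and the sum restricted to $\sqrt{U} < \sqrt{n} \leq \lambda_0\sqrt{U}$. Lemma 2.4 then rewrites $S_0^*(\eta)$ as a main integral against $\log x + 2\gamma$, a dual sum $\sum_{m\leq N} d(m)\int f(x)\Theta(mx)\,dx$, and an $O(\varepsilon_1)$ remainder. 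The main integral, after the substitution $u=\sqrt{x}$ and integration by parts against the oscillation $e(-2uB)$, contributes $O(U^{-3/4+\varepsilon})$ because each integration by parts supplies a factor $1/B \ll U^{-1/2}$ while $\log x$ loses only a power of $L$.

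Next I would process the dual sum by substituting $u=\sqrt{x}$ in each integral and expanding $\Theta(mx)$ via (2.12). The two cosine branches yield integrands carrying phases $e(\,2u(\sqrt{m}-B)\,)$ and $e(-2u(\sqrt{m}+B))$. The second branch is killed by repeated integration by parts in $u$: each step produces a factor $(\sqrt{m}+B)^{-1} \ll U^{-1/2}$, and Lemmas 3.4--3.5 then bound the resulting sum over $m$ by $O(U^{-3/4+\varepsilon})$. For the surviving $(+)$ branch, integration by parts in $u$ against $e(2u(\sqrt{m}-B))$ gives boundary contributions at $u=\sqrt{U}$ and $u=\lambda_0\sqrt{U}$; the upper endpoint produces only rapidly oscillating terms which, after summation against $d(m)$ by Lemmas 3.4--3.5, are absorbed into $O(U^{-3/4+\varepsilon})$.

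The crux is reorganizing the $u=\sqrt{U}$ boundary terms: dyadically decompose the $m$-range by $\lambda \in \{1,\dots,\lambda_0-1\}$ according to the proximity of $\sqrt{m}$ to $\sqrt{U}+\lambda B$, and then perform the shift $\sqrt{m} \leftrightarrow \sqrt{n}+\lambda B$ with the new dummy $\sqrt{n}\leq\sqrt{U}$. The Gaussian factor $e^{-4\pi u^2/A^2}$ picked up at the lower endpoint combines with the shift to yield exactly $G_\lambda(\sqrt{n},\eta)=e^{-4\pi(\sqrt{n}+\lambda B)^2/A^2}$; completing the square $-\lambda B^{2}-2\sqrt{n}B=-(\sqrt{n}+\lambda B)^{2}/\lambda + n/\lambda$ produces the factor $W_\lambda(\eta)=e(-\lambda B^{2})$ alongside the retained oscillation $e(-2\sqrt{n}B-1/8)$; and carrying the integration-by-parts expansion in $u$ to second order, when paired with the $\alpha=0,1,2$ terms of (2.12), generates precisely $\sum_{\alpha=0}^{2}\gamma_\alpha(-i)^\alpha/(4\pi\sqrt{n}(\sqrt{n}+B))^\alpha$, with $\sqrt{n}+B$ coming from $\sqrt{m}$ in the original variable and $(-i)^\alpha$ from the $\alpha\pi/2$ phase shifts of the Voronoi kernel. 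The residual $\alpha\geq 3$ tail of the expansion combines with the already-estimated endpoint and $(\sqrt{m}+B)$-branch errors to give $O(U^{-3/4+\varepsilon})$. The main obstacle will be exactly this bookkeeping step: matching the $\lambda$-indexed dyadic pieces of the dual $m$-sum against the shifted $n$-sums inside $S_\lambda(\eta)$, verifying that the three coefficients $\gamma_\alpha(-i)^\alpha$ emerge in the stated form, and checking uniformity in the $\eta_K$-differencing of Definition 1.1 so that the error stays within $O(U^{-3/4+\varepsilon})$ after the operation $f(\eta)\,\text{\LARGE\textbar}_{\eta_K}$ is applied.
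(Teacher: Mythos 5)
Your plan has two genuine gaps, and they are exactly the points the paper's proof is built around. First, the error estimates you claim do not hold in absolute value. The Voronoi ``main'' term $\int x^{-1/4}(\log x+2\gamma)G_{0}e(-2\sqrt{x}B)\,dx$ is not $O(U^{-3/4+\varepsilon})$: after $u=\sqrt{x}$ the phase is linear, integration by parts gives an unavoidable boundary term at $u=\sqrt{U}$ of size about $U^{1/4}L/B\asymp U^{-1/4}L$ (the Gaussian is $\approx 1$ there), and repeating the integration by parts never removes it. The same problem afflicts the boundary terms of your $(+)$ branch when $\sqrt{m}$ is close to $B$. In the paper these contributions are \emph{not} small pointwise; they are only $O_{\eta}(U^{-90})$, i.e.\ small after the $K$-fold differencing in $\eta_{1},\dots,\eta_{K}$, which is the whole reason $\eta$, $K$ and Lemma 3.6 exist. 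The mechanism is: since $2\sqrt{U}\eta_{j}=2k_{1}$ is an even integer, the violent factor $e(-2(\lambda+1)\eta\sqrt{U})$ equals $1$, the residual $\eta$-dependence is analytic and of moderate size on a disc of radius $1/\sqrt{L}$, and Lemma 3.6 makes its $\eta_{K}$-difference $\ll U^{-99}$. You mention the $\eta_{K}$-differencing only as a final bookkeeping check, but without invoking it (together with the contour rotations of (4.12)--(4.26)) your stated $O(U^{-3/4+\varepsilon})$ bounds for the log term and the endpoint terms are simply false.

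Second, the identification of the right-hand side has no mechanism in your setup, because you dropped the paper's key preliminary step of inserting $e(n)=1$, which turns the summand's phase into the quadratic one $e(n-2\sqrt{n}(\sqrt{U}+\eta+\nu))=e(u^{2}-2uB)$ at $u=\sqrt{n}$. With that quadratic phase, the dual integrals $j_{n\alpha}^{-}(\eta)$ have a genuine interior saddle at $x_{n}=\sqrt{U}+\sqrt{n}+\nu+\eta$, and the Fresnel evaluation along the $45^{\circ}$ line is what produces, exactly, $G_{\lambda+1}(\sqrt{n},\eta)$ (the original Gaussian re-centred at the saddle), $W_{\lambda+1}(\eta)e(-2\sqrt{n}(\sqrt{U}+\eta+\nu))$ (the phase value at the saddle, using $e(-n)=1$), and the factors $\gamma_{\alpha}(-i)^{\alpha}(4\pi\sqrt{n}(\sqrt{U}+\sqrt{n}+\eta+\nu))^{-\alpha}$. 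With your linear phase $e(-2\sqrt{n}B-1/8)$ there is no stationary point, everything reduces to boundary terms $\propto(\sqrt{m}\pm B)^{-1}$, and no rearrangement of those can manufacture the Gaussians $G_{\lambda}(\sqrt{n},\eta)$ for $\lambda\geq 2$ from the single weight $G_{0}$ present in the integrand. Relatedly, the paper does not obtain all $\lambda=1,\dots,\lambda_{0}-1$ in one pass: one (smoothed, $h$-averaged) application of Lemma 2.3 plus the saddle-point step yields the recursion $S_{\lambda}^{*}(\eta)=S_{\lambda+1}(\eta)+S_{\lambda+1}^{*}(\eta)+O(U^{-3/4+\varepsilon})+O_{\eta}(\cdots)$, and the lemma follows by iterating $\lambda_{0}-1\ll L$ times, each iteration shifting the Gaussian centre by one unit of $\sqrt{U}+\eta+\nu$. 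Your single-pass ``dyadic decomposition by proximity of $\sqrt{m}$ to $\sqrt{U}+\lambda B$'' does not reproduce this structure. (A smaller point: Lemma 2.4 applies to the $h$-averaged sum, so you also need the endpoint argument (4.7)--(4.9) showing no integers lie in $(U,(\sqrt{U}+h)^{2}]$ before you may replace $S_{0}^{*}$ by its $h$-average.)
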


\begin{proof} Let
\begin{equation}U=[X],\ 0<h\leq
h_{0}=\frac{1}{\sqrt{U}L^{2}},
\end{equation} \noindent
then
\begin{equation}0<(\sqrt{U}+h)^{2}-U<L^{-3/2},
\end{equation}
$$\{\sqrt{U}^{2}\}=\{U\}=0,$$
Hence there doesn't exist any integer in the interval
$(U,(\sqrt{U}+h)^{2}]$, that is
$$\sum\limits_{\sqrt{U}<\sqrt{n}\leq\sqrt{U}+h}=0.$$
In the same way,
$$\sum\limits_{(\lambda_{0}-\lambda)\sqrt{U}<\sqrt{n}\leq(\lambda_{0}-\lambda)\sqrt{U}+h}=0.
$$
Noticing $e(n)=1$, we have
$$
S_{\lambda}^{*}(\eta)=\frac{e(-1/8)W_{\lambda}(\eta)}{h_{0}} \ \ \ \
\ \ \ \ \ \ \ \ \ \ \ \ \ \ \ \ \ \ \ \ \ \ \ \ \ \ \ \ \ \ \ \ \ \
\ \ \ \ \ \ \ \ \ \ \ \ \ \ \ \ \ \ \ \ \ \ \ \ \ \ \ \ \ \ \ \ \ \
\ \ \ \ \ \ \ \
$$
\begin{equation}
\times\int_{0}^{h_{0}}dh
\sum\limits_{\sqrt{U}+h<\sqrt{n}\leq(\lambda_{0}-\lambda)\sqrt{U}+h}d(n)n^{-1/4}G_{\lambda}(\sqrt{n},\eta)e(n-2\sqrt{n}(\sqrt{U}+\eta+\nu)).
\end{equation}
By Lemma 2.3
\begin{eqnarray}
S_{\lambda}^{*}(\eta)&=&\sum\limits_{n\leq
N}\frac{e(-1/8)W_{\lambda}(\eta)d(n)}{h_{0}}\int_{0}^{h_{0}}dh
\int_{(\sqrt{U}+h)^{2}}^{((\lambda_{0}-\lambda)\sqrt{U}+h)^{2}}\notag\\
&&x^{-1/4}G_{\lambda}(\sqrt{n},\eta)\Theta(nx)e(x-2\sqrt{x}(\sqrt{U}+\eta+\nu))dx\notag\\
 &&+\delta_{\lambda}(\eta)+O(\varepsilon_{1}),
\end{eqnarray}
 holds for any $\varepsilon_{1}>0$, as $N\geq
N(\varepsilon_{1})$, where $\Theta(nx)$ is (2.12),

$$
\delta_{\lambda}(\eta)=\frac{e(-1/8)W_{\lambda}(\eta)}{h_{0}}\int_{0}^{h_{0}}dh
\int_{(\sqrt{U}+h)^{2}}^{((\lambda_{0}-\lambda)\sqrt{U}+h)^{2}}x^{-1/4}G_{\lambda}(\sqrt{n},\eta)
\ \ \ \ \ \ \ \ \ \ \ \ \ \ \ \ \ \ \ \ \ \ \ \ \ \ \ \ \ \ \ \ \ \
\ \
$$
$$
\ \ \ \ \ \ \ \ \ \ \ \ \ \ \ \ \ \ \ \ \ \ \ \ \ \ \ \ \ \ \ \ \ \
\ \ \ \ \ \ \ \ \times e(x-2\sqrt{x}(\sqrt{U}+\eta+\nu))(\log
x+2\gamma)dx
$$
$$
=\frac{2e(-1/8)W_{\lambda}(\eta)}{h_{0}}\int_{0}^{h_{0}}dh\int_{\sqrt{U}+h}^{(\lambda_{0}-\lambda)\sqrt{U}+h}x^{1/2}G_{\lambda}(\sqrt{n},\eta)\ \
\ \ \ \ \ \ \ \ \ \ \ \ \ \
\ \ \ \ \ \ \ \ \ \ \ \ \ \ \ \ \ \ \ \ \ \ \ \ \ \ \ \ \ \ \ \ \ \
\ \ \ \ \ \ \ \ \ \ \ \ \ \
$$
\begin{equation}
\ \ \ \ \ \ \ \ \ \ \ \ \ \ \ \ \ \ \ \ \ \ \ \ \ \ \ \ \ \ \ \ \ \
\ \ \ \ \ \ \ \ \ \times e(x^{2}-2\sqrt{x}(\sqrt{U}+\eta+\nu))(\log
x^{2}+2\gamma)dx.
\end{equation}
Moving the integral line of the inner integral we
obtain
\begin{eqnarray*}
\delta_{\lambda}(\eta)&=&\frac{2e(-1/8)W_{\lambda}(\eta)}{h_{0}}\int_{0}^{h_{0}}dh(\int_{\sqrt{U}+h}^{\sqrt{U}+h+z_{0}}
+\int_{\sqrt{U}+h+z_{0}}^{(\lambda_{0}-\lambda)\sqrt{U}+h+z_{0}}+\int_{(\lambda_{0}-\lambda)\sqrt{U}+h+z_{0}}^{(\lambda_{0}-\lambda)\sqrt{U}+h})\\
&&\times z^{\frac{1}{2}}G_{\lambda}(\sqrt{n},\eta)(\log
z^{2}+2\gamma)e(z^{2}-2z(\sqrt{U}+\eta+\nu))dz
\end{eqnarray*}
\begin{equation}
=\frac{2e(-1/8)W_{\lambda}(\eta)}{h_{0}}\int_{0}^{h_{0}}dh(I_{1h}(\eta)+I_{2h}(\eta)
+I_{3h}(\eta)), \ \ \ \ \ \ \ {\rm say} \ \ \ \ \
\end{equation} where
\begin{equation}
z_{0}=Le(\frac{1}{8}).
\end{equation}
Since
\begin{eqnarray*}
G_{\lambda}(z,\eta)&=&e^{-\frac{4\pi(z+\lambda(\sqrt{U}+\eta+\nu))^{2}}{A^{2}}}\\
&=&\int_{-\infty}^{\infty}e^{-\pi\theta^{2}}e(\frac{2\theta(z+\lambda(\sqrt{U}+\eta+\nu))}{A})d\theta
\ \ \ \ \ \ \ \ \ \ \ \ \ \ \ \ \ \ \ \ \ \
\end{eqnarray*}
\begin{equation}\ \ \ \ \ \ \ \ \ \ \ \ \ \ \ \ \ \ \ \ \ \ \ \ \ =\int_{-L}^{L}e^{-\pi\theta^{2}}e(\frac{2\theta\lambda(\sqrt{U}+\nu)}{A})
e(\frac{2(z+\lambda\eta)\theta}{A})d\theta+O(e^{-L^{2}}),\end{equation}
then by (4.12),
\begin{equation}
W_{\lambda}(\eta)I_{1h}(\eta)=\int_{-L}^{L}e^{-\pi\theta^{2}}e(\frac{2\theta\lambda(\sqrt{U}+\nu)}{A})J_{1\theta}(\eta)+O(e^{-L^{2}}),
\end{equation}
where
$$
J_{1\theta}(\eta)=W_{\lambda}(\eta)\int_{\sqrt{U}+h}^{\sqrt{U}+h+z_{0}}f_{\theta}(z)e(z^{2}-2z(\sqrt{U}+\eta+\nu)+\frac{2(z+\lambda\eta)\theta}{A})dz
$$
\begin{equation}
=\int_{0}^{z_{0}}f_{\theta}(\sqrt{U}+h+z)e((\sqrt{U}+h+z)^{2}+2(\sqrt{U}+h+z)(-\sqrt{U}-\nu+\frac{\theta}{A}))\omega(z,\eta)dz,
\end{equation}
\begin{equation}f_{\theta}(z)=z^{1/2}(\log
z^{2}+2\gamma), \ \ \ \ \ \ \ \ \ \ \ \ \ \ \ \ \ \ \ \ \ \ \ \ \ \
\ \ \ \ \ \ \ \ \
\end{equation}
\begin{eqnarray*}
\omega(z,\eta)&=&e(2\eta(-\sqrt{U}-h-z+\frac{\lambda\theta}{A}))W_{\lambda}(\eta)\\
&=&e(2\eta(-\sqrt{U}-h-z+\frac{\lambda\theta}{A})-\lambda(\sqrt{U}+\eta+\nu)^{2})
\end{eqnarray*}
\begin{equation}=e(-\lambda(\sqrt{U}+\nu)^{2})g_{z\theta}(\eta),\ \ \ \ \ \ \ \ \ \ \ \ \ \ \ \ \ \ \ \ \ \  \end{equation}
$$\ \ \ \ \ g_{z\theta}(\eta)=e(-2(\lambda+1)\eta\sqrt{U}+2\eta(-h-z-\nu\lambda+\frac{\lambda\theta}{A})-\lambda\eta^{2})$$
\begin{equation}
=e(2\eta(-h-z+\frac{\lambda\theta}{A})-\lambda\eta^{2}).\ \ \ \ \ \
\ \ \ \ \ \ \ \ \
\end{equation}
The last equality is given by
\begin{equation}
e(-2(\lambda+1)\eta\sqrt{U})=e(\eta\sqrt{U})=1
\end{equation}
where $\eta=\eta_{1}+\cdots+\eta_{K}$, $\eta_{j}=0$  or
$k_{1}/\sqrt{U}$,  $j=1,2\cdots,K$. Thus,
\begin{equation}g_{z\theta}(w)\ll
e^{4\pi|w|(|\frac{\lambda\theta}{A}|+|h|+|z|+\lambda|w|^{2})}\ll
e^{8\pi\sqrt{L}}\end{equation} for $|w|\leq\frac{1}{\sqrt{L}},\
|z|\leq L,\ \lambda\ll L,\ |\theta|\ll L,\ h=(\frac{1}{\sqrt{UL}}),\
A=C_{0}\sqrt{UL}$. By Lemma 3.6, we have
\begin{equation}g_{z\theta}(\eta)|_{\eta_{K}}\ll e^{8\pi\sqrt{L}}U^{-99}\ll U^{-98}\end{equation}
for $\nu_{0}=k_{1}/\sqrt{U}\ll L^{-2}$. It follows from (4.15),
(4.16), (4.17), (4.18), (4.19) and (4.22) that
\begin{equation}W_{\lambda}(\eta)I_{1h}(\eta)|_{\eta}\ll
U^{-90}.\end{equation}
In the same way,
\begin{equation}W_{\lambda}(\eta)I_{3h}(\eta)|_{\eta}\ll
U^{-90}.\end{equation}
By (4.13),
\begin{eqnarray*}
W_{\lambda}(\eta)I_{2h}(\eta)&=&W_{\lambda}(\eta)\int_{\sqrt{U}+h+z_{0}}^{(\lambda_{0}-\lambda)\sqrt{U}+h+z_{0}}
f_{\theta}(z)G_{\lambda}(\sqrt{n},\eta)e(z^{2}-2z(\sqrt{U}+\eta+\nu))dz\\
&\ll&\int_{\sqrt{U}+h}^{(\lambda_{0}-\lambda)\sqrt{U}+h}\sqrt{UL}|e((x+z_{0})^{2}-2(z_{0}+x)(\sqrt{U}+\eta+\nu)))|dx\\
&=&\sqrt{UL}\int_{\sqrt{U}+h}^{(\lambda_{0}-\lambda)\sqrt{U}+h}|e(z_{0}^{2}+2z_{0}(x-(\sqrt{U}+\eta+\nu)))|dx\\
&=&\sqrt{UL}\int_{\sqrt{U}+h}^{(\lambda_{0}-\lambda)\sqrt{U}+h}e^{-2\pi
L^{2}-2\sqrt{2}\pi(x-(\sqrt{U}+\eta+\nu))}
\end{eqnarray*}
\begin{equation}\ll\sqrt{UL}e^{-L^{2}}=O_{\eta}(U^{-90})\ \ \ \ \ \ \ \ \ \ \ \ \ \ \ \ \ \ \ \ \ \ \ \ \ \ \ \ \ \ \ \ \ \ \ \  \end{equation}
for $z_{0}=Le(\frac{1}{z})$. It follows from (4.12), (4.23), (4.24)
and (4.25) that
\begin{equation}\delta_{\lambda}(\eta)=O_{\eta}(U^{-90}).\end{equation}
Taking $\alpha_{0}=2$ in (2.12), we have
\begin{equation}\Theta(nx)=\sqrt{2}(\sqrt{nx})^{-1/2}\sum\limits_{\alpha=0}^{2}
\frac{\gamma_{\alpha}}{(4\pi)^{\alpha}}(\sqrt{nx})^{-\alpha}\cos(4\pi\sqrt{nx}+\frac{\pi}{4}+\frac{\alpha\pi}{2})+O((\sqrt{nx})^{-7/2}).\end{equation}
By (4.26) and (4.27) and putting $x=x_{1}^{2}$ in the integral
(4.10), we have
\begin{equation}S_{\lambda}^{*}(\eta)=\&_{\lambda}^{+}(\eta)+\&_{\lambda}^{-}(\eta)+O(\varepsilon_{1})+O_{\eta}(U^{-\frac{3}{2}}),\end{equation}
where
\begin{equation}\&_{\lambda}^{\pm}(\eta)=\sqrt{2}e(-\frac{1}{8}\pm\frac{1}{8})\sum\limits_{n\leq
N}d(n)n^{-\frac{1}{4}}\sum\limits_{\alpha=0}^{2}\gamma_{\alpha}(\frac{\pm
i}{4\pi\sqrt{n}})^{\alpha}j_{n\alpha}^{\pm}(\eta),\end{equation}
\begin{equation}j_{n\alpha}^{\pm}(\eta)=\frac{W_{\lambda}(\eta)}{h_{0}}
\int_{0}^{h_{0}}dh\int_{\sqrt{U}+h}^{(\lambda_{0}-\lambda)\sqrt{U}+h}
\frac{G_{\lambda}(\sqrt{n},\eta)e(x^{2}+2x(\pm\sqrt{n}-(\sqrt{U}+\eta+\nu)))dx}{x^{\alpha}}.\end{equation}
As $n>n_{0}>2U$, the integration by parts gives
$$
j_{n\alpha}^{\pm}(\eta)=\ \ \ \ \ \ \ \ \ \ \ \ \ \ \ \ \ \ \ \ \ \
\ \ \ \ \ \ \ \ \ \ \ \ \ \ \ \ \ \ \ \ \ \ \ \ \ \ \ \ \ \ \ \ \ \
\ \ \ \ \ \ \ \ \ \ \ \ \ \ \ \
$$
\begin{eqnarray*}
&&=\frac{W_{\lambda}(\eta)}{h_{0}}\int_{0}^{h_{0}}
\frac{G_{\lambda}(x+h,\eta)e((x+h)^{2}+2(x+h)(\pm\sqrt{n}-\sqrt{U}-\eta-\nu))}
{(x+h)^{\alpha}4\pi
i(x+h\pm\sqrt{n}-\sqrt{U}-\eta-\nu)}|_{\sqrt{U}}^{(\lambda_{0}-\lambda)\sqrt{U}}\\
&&-\frac{W_{\lambda}(\eta)}{4\pi
ih_{0}}\int_{0}^{h_{0}}(\frac{G_{\lambda}(x+h,\eta)}
{(x+h)^{\alpha}})_{x}^{\prime}\frac{e((x+h)^{2}+2(x+h)(\pm\sqrt{n}-\sqrt{U}-\eta-\nu))dx}{(x+h\pm\sqrt{n}-\sqrt{U}-\eta-\nu)}\\
&&\ll\frac{U}{n}, \end{eqnarray*} for
$h_{0}=\frac{1}{\sqrt{U}L^{2}}$, where the last inequality is given
by integrating by parts for $h$. Since
$$\sum\limits_{n>n_{0}}d(n)n^{-5/4}U\ll\frac{UL}{n_{0}^{1/4}},$$
then taking $n_{0}=U^{20}$,  by (4.29) we obtain
\begin{equation}
\&_{\lambda}^{\pm}(\eta)=\sqrt{2}e(-\frac{1}{8}\pm\frac{1}{8})\sum\limits_{n\leq
U^{20}}d(n)n^{-\frac{1}{4}}\sum\limits_{\alpha=0}^{2}\gamma_{\alpha}(\frac{\pm
i}{4\pi\sqrt{n}})^{\alpha}j_{n\alpha}^{\pm}(\eta)+O_{\eta}(U^{-\frac{3}{2}}).
\end{equation}
As $\sqrt{n}>(\lambda_{0}-\lambda-1)\sqrt{U}$, moving the integral
of $j_{n\alpha}^{-}(\eta)$, we obtain
$$
j_{n\alpha}^{-}(\eta)=\frac{1}{h_{0}}\int_{0}^{h_{0}}dh(\int_{\sqrt{U}+h}^{\sqrt{U}+h-z_{0}}
+\int_{\sqrt{U}+h-z_{0}}^{(\lambda_{0}-\lambda)\sqrt{U}+h-z_{0}}+\int_{(\lambda_{0}-\lambda)\sqrt{U}+h-z_{0}}^{(\lambda_{0}-\lambda)\sqrt{U}+h}
$$
$$
\times \frac{e(z^{2}-2z(\sqrt{U}+\sqrt{n}+\nu))f(z,\eta)dz}{z^{\alpha}}
$$
\begin{equation}
=\frac{1}{h_{0}}\int_{0}^{h_{0}}dh(I_{1nh}^{-}(\eta)+I_{2nh}^{-}(\eta)+I_{3nh}^{-}(\eta)), \ \ \ \ {\rm say}
\end{equation}
where $z_{0}=Le(\frac{1}{8})$ and
\begin{equation}f(z,\eta)=W_{\lambda}(\eta)G_{\lambda}(z,\eta)e(-2z\eta).\end{equation}
For $z=x-\rho e(\frac{1}{8})$, we have
\begin{equation}|e(z^{2}-2\sqrt{n}(\sqrt{U}+\sqrt{n}+\nu)z)|=e^{-2\pi\rho^{2}-2\sqrt{2}\pi\rho(\sqrt{U}+\sqrt{n}+\nu-x)}\ll e^{-2\pi\rho^{2}}\end{equation}
holds for $x\leq(\lambda_{0}-\lambda)\sqrt{U}+h,\
\sqrt{n}>(\lambda_{0}-\lambda-1)\sqrt{U},\ h,\nu=O(L^{-1}),\
0\leq\rho\leq L$. As $z\in I_{1nh}^{-}(\eta),\  z=\sqrt{U}+h-\rho
e(\frac{1}{8})$; as $z\in I_{3nh}^{-}(\eta),\
z=(\lambda_{0}-\lambda)\sqrt{U}+h-\rho e(\frac{1}{8})$. Repeating
the proof of (4.26), we have
\begin{equation}f(z,\eta)=O_{\eta}(U^{-90}),\  z\in I_{1nh}^{-},\ I_{3nh}^{-}.\end{equation}
It follows from (4.34), (4.35 ) and $z^{-\alpha}\leq1$ that
\begin{equation}I_{1nh}^{-}(\eta)+I_{3nh}^{-}(\eta)=O_{\eta}(U^{-90}).\end{equation}
As $z\in I_{2nh}^{-}(\eta), z=x-Le(\frac{1}{8})$. Clearly,
$f(z,\eta)\ll 1,z=O(L)$. Hence, by (4.33) and the definition (4.32)
of $I_{2nh}^{-}(\eta)$ we have
\begin{equation}I_{2nh}^{-}(\eta)\ll\int_{\sqrt{U}+h}^{(\lambda_{0}-\lambda)\sqrt{U}+h}e^{-2\pi L^{2}}dx\ll e^{- L^{2}}\ll_{\eta}U^{-90}.\end{equation}
Moreover, (4.32), (4.36) and (4.37) deduce that
\begin{equation}j_{n\alpha}^{-}(\eta)=O_{\eta}(U^{-90})\end{equation}
for $\sqrt{n}>(\lambda_{0}-\lambda-1)\sqrt{U}$. Moving the integral
line of $j_{n\alpha}^{+}(\eta)$, we obtain
\begin{eqnarray*}
j_{n\alpha}^{+}(\eta)&=&\frac{1}{h_{0}}\int_{0}^{h_{0}}dh(\int_{\sqrt{U}+h}^{\sqrt{U}+h+z_{0}}
+\int_{\sqrt{U}+h+z_{0}}^{(\lambda_{0}-\lambda)\sqrt{U}+h+z_{0}}+\int_{(\lambda_{0}-\lambda)\sqrt{U}+h+z_{0}}^{(\lambda_{0}-\lambda)\sqrt{U}+h})\\
&&\times
\frac{e(z^{2}+2(-\sqrt{U}+\sqrt{n}-\nu)z)f(z,\eta)dz}{z^{\alpha}}
\end{eqnarray*}
\begin{equation}=\frac{1}{h_{0}}\int_{0}^{h_{0}}dh(I_{1nh}^{+}(\eta)+I_{2nh}^{+}(\eta)+I_{3nh}^{+}(\eta)), \ \ \ \ {\rm say}\end{equation}
If $z=x+\rho e(\frac{1}{8})$, then
\begin{eqnarray*}
|e(z^{2}+2(\sqrt{U}+\sqrt{n}-\nu)z)|&=&|e(i\rho^{2}+2\rho
e(\frac{1}{8})(-\sqrt{U}+\sqrt{n}-\nu+x))|\\
&=&e^{-2\pi\rho^{2}-2\sqrt{2}\rho(-\sqrt{U}+\sqrt{n}-\nu+x)}
\end{eqnarray*}
\begin{equation}
\ll e^{-2\pi\rho^{2}} \ \ \ \ \ \ \end{equation} for
$x\geq\sqrt{U}+h,n\geq1, and \ \nu,h=O(L^{-1}),0\leq\rho\leq L$.

Similar to the proof of (4.38), we have
\begin{equation}j_{n\alpha}^{+}(\eta)=O_{\eta}(U^{-90})\end{equation}
for $n\geq1$.  It follows from (4.28), (4.31), (4.38) and (4.41)
that
\begin{equation}S_{\lambda}^{*}(\eta)=\sqrt{2}e(-\frac{1}{4})
\sum\limits_{\sqrt{n}\leq
(\lambda_{0}-\lambda-1)\sqrt{U}}d(n)n^{-\frac{1}{4}}
\sum\limits_{\alpha=0}^{2}\gamma_{\alpha}(\frac{-
i}{4\pi\sqrt{n}})^{\alpha}j_{n\alpha}^{-}(\eta)+\delta(\eta)+O_{\eta}(U^{-\frac{3}{2}}),\end{equation}
where
\begin{equation}\delta(\eta)\ll_{\eta}\sum\limits_{n\leq
U^{20}}d(n)n^{-\frac{1}{4}}U^{-90}\ll_{\eta}U^{-\frac{3}{2}}.\end{equation}
Moving the inner integral line of $j_{n\alpha}^{-}(\eta)$, we have
$$
j_{n\alpha}^{-}(\eta)=\frac{1}{h_{0}}\int_{0}^{h_{0}}dh(\int_{\sqrt{U}+h}^{\sqrt{U}+h-z_{0}}
+\int_{\sqrt{U}+h-z_{0}}^{x_{n}-z_{0}}+\int_{x_{n}-z_{0}}^{x_{n}+z_{0}}
+\int_{x_{n}+z_{0}}^{(\lambda_{0}-\lambda)\sqrt{U}+h+z_{0}}+\int_{(\lambda_{0}-\lambda)\sqrt{U}+h+z_{0}}^{(\lambda_{0}-\lambda)\sqrt{U}+h})
$$
$$
\times
\frac{W_{\lambda}(\eta)e((z-x_{n})^{2}-x_{n}^{2})dz}{z^{\alpha}} \ \
\ \ \ \ \ \ \ \ \ \ \ \ \ \ \ \ \ \ \ \ \ \ \ \ \ \ \ \ \ \ \ \ \ \
\ \ \ \ \ \ \ \ \ \ \ \ \ \ \ \ \ \
$$
\begin{equation}
=\frac{1}{h_{0}}\int_{0}^{h_{0}}dh(I_{-1nh}^{-}(\eta)
+I_{-2nh}^{-}(\eta)+I_{0}(\eta)+I_{1nh}^{+}(\eta)+I_{2nh}^{+}(\eta)),\
\ \ \ {\rm say},
\end{equation} \noindent for $\sqrt{n}\leq
(\lambda_{0}-\lambda-1)\sqrt{U}$, where $z_{0}=Le(\frac{1}{8}),$
\begin{equation}x_{n}=\sqrt{U}+\sqrt{n}+\nu+\eta.
\end{equation}

\begin{picture}(400,200)
\put(100,100){\line(1,0){10}}\put(120,100){\line(1,0){10}}\put(140,100){\line(1,0){10}}\put(160,100){\line(1,0){10}}\put(180,100){\line(1,0){10}}
\put(200,100){\line(1,0){10}}\put(120,100){\line(1,0){10}}\put(140,100){\line(1,0){10}}\put(160,100){\line(1,0){10}}\put(180,100){\line(1,0){10}}
\put(200,100){\line(1,0){10}}\put(220,100){\line(1,0){10}}\put(240,100){\line(1,0){10}}\put(260,100){\line(1,0){10}}\put(280,100){\line(1,0){10}}
\put(300,100){\line(1,1){50}}\put(200,100){\line(1,1){50}}
\put(200,100){\line(-1,-1){50}} \put(100,100){\line(-1,-1){50}}
\put(350,150){\line(-1,0){100}}\put(250,150){\vector(1,0){50}}
\put(50,50){\line(1,0){100}}\put(50,50){\vector(1,0){50}}
\put(360,160){\makebox(10,1)[c]{($\lambda_0-\lambda)\sqrt{U}+h+z_0$}}
\put(250,160){\makebox(5,1)[c]{$x_n+z_0$}}
\put(300,90){\makebox(10,1)[c]{$(\lambda_0-\lambda)\sqrt{U}+h$}}
\put(210,90){\makebox(2,1)[c]{$x_n$}}
\put(150,40){\makebox(5,1)[c]{$x_n-z_0$}}
\put(50,40){\makebox(7,1)[c]{$\sqrt{U}+h-z_0$}}
\put(100,110){\makebox(4,1)[c]{$\sqrt{U}+h$}}
\put(220,110){\makebox(1,2)[c]{$\frac{\pi}{4}$}}\put(200,20){\makebox(5,1)[c]{Fig.4.1}}
\end{picture}

\noindent Repeating the proof of (4.38) and (4.41), we have
\begin{equation}I_{-2nh}^{-}(\eta)+I_{-1nh}^{-}(\eta)+I_{1nh}^{+}(\eta)+I_{2nh}^{+}(\eta)\ll_{\eta}U^{-90}.\end{equation}
Hence
\begin{eqnarray*}
j_{n\alpha}^{-}(\eta)&=&\frac{1}{h_{0}}\int_{0}^{h_{0}}I_{0}(\eta)dh+O_{\eta}(U^{-90})=I_{0}(\eta)+O_{\eta}(U^{-90})\\
&=&W_{\lambda}(\eta)e(-x_{n}^{2})\int_{x_{n}-z_{0}}^{x_{n}+z_{0}}\frac{G_{\lambda}(z,\eta)e((z-x_{n})^{2})dz}{z^{\alpha}}+O_{\eta}(U^{-90})
\ \ \ \ \ \ \ \ \ \ \ \ \ \ \ \
\end{eqnarray*}
\begin{equation}=W_{\lambda}(\eta)e(-x_{n}^{2}+\frac{1}{8})
\int_{-L}^{L}\frac{G_{\lambda}(x_{n}+\rho
e(\frac{1}{8}),\eta)e^{-2\pi\rho^{2}}d\rho}{(x_{n}+\rho
e(\frac{1}{8}))^{\alpha}}+O_{\eta}(U^{-90}).\end{equation}
By (4.1)
and (4.2),
$$G_{\lambda}(x_{n}+\rho e(\frac{1}{8}),\eta)=e^{-\frac{4\pi(x_{n}+\rho e(\frac{1}{8})+\lambda(\sqrt{U}+\eta+\nu))^{2}}{A^{2}}} \ \ \ \ \ \ \
\ \ \ \ \ \ \ \ \ \ \ \ \ \ \ \ $$
\begin{equation}=G_{\lambda+1}(\sqrt{n},\eta)(1+C_{1}(n)\rho+C_{2}(n)\rho^{2}+C_{3}(n)\rho^{3}+O(\frac{\rho^{4}}{U^{2}})),\end{equation}
$$C_{1}(n)\leq\frac{1}{\sqrt{U}},\ C_{3}(n)\ll\frac{1}{\sqrt{U}^{3}},$$
$$C_{2}(n)\ll\frac{1}{U},\ C_{2}'(x)\ll\frac{1}{U\sqrt{xU}}$$
and
\begin{eqnarray*}
W_{\lambda}(\eta)e(-x_{n}^{2})&=&e(-(\lambda+1)(\sqrt{U}+\eta+\nu)^{2}-2\sqrt{n}(\sqrt{U}+\eta+\nu)-n)\\
&=&W_{\lambda+1}(\eta)e(-2\sqrt{n}(\sqrt{U}+\eta+\nu))
\end{eqnarray*}
 for
$e(-n)=1$. Therefore,
\begin{eqnarray*}
j_{n\alpha}^{-}(\eta)&=&\frac{e(\frac{1}{8})W_{\lambda+1}(\eta)G_{\lambda+1}(\sqrt{n},\eta)e(-2\sqrt{n}(\sqrt{U}+\eta+\nu))}
{(\sqrt{U}+\sqrt{n}+\nu+\eta)^{\alpha}}\\
&&\times\int_{-L}^{L}e(-2\pi\rho^{2})(1+C_{1}(n)\rho+C_{2}(n)\rho^{2}+C_{3}(n)\rho^{3}+O(\frac{\rho^{4}}{U^{2}}))
(1+O(\frac{\alpha|\rho|}{(\sqrt{U})^{\alpha}}))
\end{eqnarray*}
\begin{equation}=\frac{e(\frac{1}{8})W_{\lambda+1}(\eta)G_{\lambda+1}(\sqrt{n},\eta)e(-2\sqrt{n}(\sqrt{U}+\eta+\nu))}
{\sqrt{2}(\sqrt{U}+\sqrt{n}+\nu+\eta)^{\alpha}}(1+C_{20}(n)+O(\frac{1}{U^{2}})+\frac{\alpha}{(\sqrt{U})^{\alpha}})\end{equation}
where \begin{equation}C_{20}(n)\ll \frac{1}{U},\
C_{20}'(x)\ll\frac{1}{U\sqrt{xU}}.\end{equation}

It follows from (4.42) and (4.49) that
$$S_{\lambda}^{*}(\eta)=e(-\frac{1}{8})W_{\lambda+1}(\eta)\sum\limits_{\sqrt{n}
\leq
(\lambda_{0}-\lambda-1)\sqrt{U}}d(n)n^{-\frac{1}{4}}G_{\lambda+1}(\sqrt{n},\eta)
\ \ \ \ \ \ \ \ \ \ \ \ \ \ \ \ \ \ \ \ \ \ \ \ \ \ \ \ \ $$
\begin{equation}\times\sum\limits_{\alpha=0}^{2}\gamma_{\alpha}\frac{(- i)^{\alpha}}{(4\pi\sqrt{n}(\sqrt{U}
+\eta+\nu))^{\alpha}}e(-2\sqrt{n}(\sqrt{U}+\eta+\nu))+\delta_{1}+\delta_{2}+O_{\eta}(U^{-\frac{3}{2}}),\end{equation}
where
$$\delta_{1}\ll\sum\limits_{\sqrt{n}\leq
(\lambda_{0}-\lambda-1)\sqrt{U}}d(n)n^{-\frac{1}{4}}
\sum\limits_{\alpha=0}^{2}\frac{1}{(\sqrt{n}+\sqrt{U})^{\alpha}}(\frac{1}{U^{2}}+\frac{\alpha}{(\sqrt{U})^{\alpha}}))\
\ \ \ \ \ \ \ \ \ \ \ \ \ \ \ \ \ \ \ \ $$
\begin{equation}\ll U^{-3/4}\ll_{\eta}U^{-3/4+\varepsilon}, \ 0<\varepsilon=O(\frac{1}{\log L}), \ \ \ \ \ \ \ \
\ \ \ \ \ \ \ \ \ \ \ \ \ \ \ \ \ \ \ \ \ \ \ \ \ \ \ \
\end{equation}
\begin{eqnarray*}
\delta_{2}&=&e(-\frac{1}{8})W_{\lambda+1}(\eta)\sum\limits_{\sqrt{n}
\leq
(\lambda_{0}-\lambda-1)\sqrt{U}}C_{20}(\sqrt{n})d(n)n^{-\frac{1}{4}}G_{\lambda+1}(\sqrt{n},\eta)\\
&&\times\sum\limits_{\alpha=0}^{2}\gamma_{\alpha}\frac{(-
i)^{\alpha}}
{(4\pi\sqrt{n}(\sqrt{U}+\sqrt{n}+\eta+\nu))^{\alpha}}e(-2\sqrt{n}(\sqrt{U}+\eta+\nu))\ \ \ \ \ \ \ \ \ \ \ \ \  \\
&=&e(-\frac{1}{8})W_{\lambda+1}(\eta) \end{eqnarray*}
\begin{equation}
\ \ \ \ \ \ \ \ \ \ \ \ \ \ \  \times\sum\limits_{\sqrt{n} \leq
(\lambda_{0}-\lambda-1)\sqrt{U}}C_{20}(\sqrt{n})d(n)n^{-\frac{1}{4}}e(-2\sqrt{n}(\sqrt{U}+\eta+\nu))+O(U^{-3/4}).
\end{equation}
By (4.51) and Lemma 3.4,
\begin{equation}\delta_{2}\ll U^{-3/4+\varepsilon}\ll_{\eta}U^{-3/4+\varepsilon},\ 0<\varepsilon=O(\frac{1}{\log L}).\end{equation}
The last inequality has been used the explain (1.32). Thus, by
(4.51), if $\lambda=\lambda_{0}-2$, then
\begin{equation}S_{\lambda-2}^{*}(\eta)=S_{\lambda_{0}-1}(\eta)+\delta_{\lambda_{0}-1}(\eta)+O_{\eta}(U^{-3/4+\varepsilon});\end{equation}
if $0\leq\lambda\leq\lambda_{0}-1$, then
\begin{equation}S_{\lambda}^{*}(\eta)=S_{\lambda+1}(\eta)+S_{\lambda+1}^{*}(\eta)+\delta_{\lambda+1}(\eta)+O_{\eta}(U^{-3/4+\varepsilon}),\end{equation}
where $0<\varepsilon=O(\frac{1}{\log L})$ and
$$\delta_{\lambda+1}(\eta)=\ \ \ \ \ \ \ \ \ \ \ \ \ \ \ \ \ \ \ \ \ \ \ \ \ \ \ \ \ \ \ \ \ \ \ \ \ \ \ \ \ \ \ \ \ \ \ \ \ \ \ \ \ \ \ \
\ \ \ \ \ \ \ \ \ \ \ \ \ \ \ \ \ \ \ \ \ \ \ \ \ \ \ \ \ \ \ \ \ \
\ \ \ \ \ \ \  $$
\begin{equation}=e(-\frac{1}{8})W_{\lambda+1}(\eta)
\sum\limits_{\alpha=1}^{2}\gamma_{\alpha}(\frac{- i}{4\pi})^{\alpha}
\sum\limits_{\sqrt{U}<\sqrt{n}\leq
(\lambda_{0}-\lambda-1)\sqrt{U}}d(n)n^{-\frac{1}{4}-\frac{\alpha}{2}}
\frac{e(-2\sqrt{n}(\sqrt{U}+\eta+\nu))}{(\sqrt{U}+\sqrt{n}+\nu+\eta)^{\alpha}}.\end{equation}

Taking $f(n)=\frac{1}{(\sqrt{U}+\sqrt{n}+\nu+\eta)^{\alpha}}$ in
Lemma 3.5, we know that the inner sum is $\ll U^{-3/4+\varepsilon},\
\alpha\geq 1$, that is
\begin{equation}\delta_{\lambda+1}(\eta)\ll
U^{-3/4+\varepsilon},\ 0<\varepsilon=O(\frac{1}{\log
L}).\end{equation}

Finally, (4.6) follows from (4.55), (4.56) and (4.58) and the proof
of Lemma 4.1 is complete.
\end{proof}

\begin{lem} Let
$$S_{\frac{1}{2}}(\eta)=\sum\limits_{\frac{\sqrt{U}}{2}<\sqrt{n}
\leq\lambda_{0}\sqrt{U}}d(n)n^{-\frac{1}{4}}e^{-\frac{4\pi
n}{A^{2}}}e(-2\sqrt{n}(\sqrt{U}+\eta+\nu)-\frac{1}{8}) \ \ \ \ \ \ \
\ \ \ \ \ \ \ \ \ \ \ $$
\begin{equation}=S_{0}^{*}(\eta)+\sum\limits_{\frac{\sqrt{U}}{2}<\sqrt{n}
\leq\sqrt{U}}d(n)n^{-\frac{1}{4}}e^{-\frac{4\pi
n}{A^{2}}}e(-2\sqrt{n}(\sqrt{U}+\eta+\nu)-\frac{1}{8}),\end{equation}
then
\begin{equation}S_{\frac{1}{2}}(\eta)=\sum\limits_{\lambda=1}^{\lambda_{0}}S_{\lambda}^{+}
(\eta)+\sum\limits_{\lambda=1}^{\lambda_{0}}S_{\lambda}^{-}(\eta)+O_{\eta}(U^{-3/4+\varepsilon}),\
0<\varepsilon=O(\frac{1}{\log L}),\end{equation} where
$S_{0}^{*}(\eta),\ \eta,\ \nu$ are as Lemma 4.1, and
$$S_{\lambda}^{+}(\eta)=W_{\lambda}(\eta)\sum\limits_{\sqrt{n}
\leq \frac{\sqrt{U}}{2}}d(n)n^{-\frac{1}{4}}G_{\lambda}(\sqrt{n},\eta)\ \ \ \ \ \ \ \ \ \ \ $$
\begin{equation}\ \ \ \ \ \ \ \ \ \ \ \times\sum\limits_{\alpha=0}^{2}\gamma_{\alpha}
\frac{(-
i)^{\alpha}e(-2\sqrt{n}(\sqrt{U}+\eta+\nu)-\frac{1}{8})}{(4\pi\sqrt{n}(\sqrt{U}+\sqrt{n}+\eta+\nu))^{\alpha}},
\end{equation}
$$S_{\lambda}^{-}(\eta)=W_{\lambda}(\eta)\sum\limits_{\sqrt{n}
\leq \frac{\sqrt{U}}{2}}d(n)n^{-\frac{1}{4}}G_{\lambda}(-\sqrt{n},\eta)\ \ \ \ \ \ \ \ \ \ \ $$
\begin{equation}\ \ \ \ \ \ \ \ \ \ \ \times\sum\limits_{\alpha=0}^{2}\gamma_{\alpha}
\frac{(
i)^{\alpha}e(2\sqrt{n}(\sqrt{U}+\eta+\nu)+\frac{1}{8})}{(4\pi\sqrt{n}(\sqrt{U}-\sqrt{n}+\eta+\nu))^{\alpha}}.\end{equation}
\end{lem}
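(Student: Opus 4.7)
The plan is to prove Lemma 4.2 by mimicking the method used for Lemma 4.1, but applied directly to the full sum $S_{1/2}(\eta)$ rather than only to $S_0^*(\eta)$; the extra range $\sqrt{U}/2<\sqrt{n}\leq\sqrt{U}$ that distinguishes $S_{1/2}$ from $S_0^*$ will be absorbed by allowing both $+$ and $-$ stationary phases to contribute. Concretely, I first insert the smoothing $\frac{1}{h_0}\int_0^{h_0}dh$ with $h_0=(\sqrt{U}L^2)^{-1}$; since $\{U\}=0$ and $\{(\sqrt{U}/2)^2\}=0,\{(\lambda_0\sqrt{U})^2\}=0$ (as $\lambda_0$ is an integer and $2\sqrt{U}\eta\in\mathbb Z$), the intervals $(\sqrt{U}/2,\sqrt{U}/2+h]$ and $(\lambda_0\sqrt{U},\lambda_0\sqrt{U}+h]$ contain no squares of integers, so the smoothed sum equals the original one exactly as in the derivation of (4.9).

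Next I apply Lemma 2.3 to unfold the smoothed sum into its $\Theta(nx)$-expansion, taking $\alpha_0=2$ as in (4.27). The $e(n)=1$ identity converts the quadratic phase $e(x-2\sqrt x(\sqrt U+\eta+\nu))$ into the form used in (4.10), and expanding $\cos(4\pi\sqrt{nx}+\cdot)$ gives two families $j_{n\alpha}^{\pm}(\eta)$ exactly as in (4.30). The main smooth part $S_{1/2l}$ (corresponding to $H_l$ in Lemma 3.4) is $O_\eta(U^{-3/4+\varepsilon})$ by the same contour deformation that gave $\delta_\lambda(\eta)=O_\eta(U^{-90})$ in (4.23)–(4.26), and the tail $n>U^{20}$ is negligible by the integration-by-parts bound on $j_{n\alpha}^\pm$. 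What remains is a sum over $n\leq U^{20}$ of $d(n)n^{-1/4}$ times $j_{n\alpha}^{\pm}(\eta)$.

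For the $-$ piece, I deform the $z$-contour upward along the ray $z_0=Le(1/8)$ as in (4.32); the stationary phase of $e(z^2-2z(\sqrt U+\sqrt n+\nu))$ is at $x_n=\sqrt U+\sqrt n+\nu+\eta$, which lies inside $[\sqrt U/2+h,\lambda_0\sqrt U+h]$ provided $\sqrt n\leq(\lambda_0-1/2)\sqrt U$; the contributions for $\sqrt n\leq \sqrt U/2$ yield $S_\lambda^+$ after the Taylor expansion (4.48)–(4.49) (with $\lambda_0-\lambda-1$ replaced by the new bound), while the range $\sqrt U/2<\sqrt n\leq(\lambda_0-1/2)\sqrt U$ is estimated by Lemmas 3.4, 3.5 (since the oscillation $e(-2\sqrt n(\sqrt U+\eta+\nu))$ against the divisor weight gives $\ll U^{-3/4+\varepsilon}$ once the slowly varying factor $G_\lambda$, $(\sqrt U+\sqrt n)^{-\alpha}$ is handled by partial summation as in (4.53)). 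For the $+$ piece, the contour is deformed downward, placing the stationary point at $z=\sqrt U+\eta+\nu-\sqrt n$; this now lies in the integration range only when $\sqrt n\leq \sqrt U/2$ (up to boundary terms handled exponentially), and the same Taylor expansion reproduces $S_\lambda^-$ via the reflected Gaussian $G_\lambda(-\sqrt n,\eta)$ and the conjugate oscillation $e(2\sqrt n(\sqrt U+\eta+\nu))$. Summing over $\lambda=1,\dots,\lambda_0$ (obtained by iterating the one-step identity, as in (4.55)–(4.56) for Lemma 4.1) yields (4.60).

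The main obstacle is the bookkeeping of the intermediate range $\sqrt U/2<\sqrt n\leq\sqrt U$: these terms appear in $S_{1/2}(\eta)-S_0^*(\eta)$ but not in $S_\lambda^\pm$, so one must verify that after the Voronoi/stationary-phase reduction their contribution is exactly $O_\eta(U^{-3/4+\varepsilon})$. This is where Lemmas 3.4 and 3.5, together with the partial-summation argument used in (4.53)–(4.54), are crucial, and where the constraint $\sqrt n\leq \sqrt U/2$ in the definitions of $S_\lambda^\pm$ (which makes $(\sqrt U\pm\sqrt n)^{-\alpha}$ bounded and smooth) is needed to control the Taylor remainder in (4.48) without losing the $\varepsilon$-savings.
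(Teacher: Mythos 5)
Your overall machinery (smoothing over $h$, Lemma 2.3, rotation of the contour along $e(1/8)$, Gaussian saddle points, the recentering identities that turn $W_{\lambda},G_{\lambda}$ into $W_{\lambda+1},G_{\lambda+1}(\pm\sqrt{n})$) is the right one, and you correctly see that the new feature is the $+$ saddle $x_{n}=\sqrt{U}+\eta+\nu-\sqrt{n}$, which becomes interior once the lower endpoint is $\sqrt{U}/2$ and produces the reflected terms $S_{\lambda}^{-}$. But there is a genuine error in how you dispose of the remaining ranges. You claim that the dual range $\sqrt{U}/2<\sqrt{n}\leq(\lambda_{0}-1/2)\sqrt{U}$ coming from the $-$ saddle "is estimated by Lemmas 3.4, 3.5 \ldots $\ll U^{-3/4+\varepsilon}$". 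That is false: with weight $d(n)n^{-1/4}$ and $a\asymp U$, Lemma 3.4 gives only $\ll U^{1/4+\varepsilon}$, and in fact these terms are not an error at all — they are the next iterate (the analogue of $S_{\lambda+1}^{*}$ together with the new band), and they carry all the subsequent main terms $S_{\lambda}^{\pm}$, $\lambda\geq2$; the Gaussian $G_{\lambda+1}$ does not suppress them until $\lambda\gtrsim L$, which is exactly why the conclusion has a sum over $\lambda$ up to $\lambda_{0}=[C_{0}^{2}L]$. Only the $\alpha\geq1$ corrections and the Taylor remainders (the analogues of (4.52)--(4.54), (4.57)--(4.58)) are errors of size $U^{-3/4+\varepsilon}$. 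As written, this clause contradicts your own later sentence about iterating as in (4.55)--(4.56): if you discard that range you get $S_{1/2}=S_{1}^{+}+S_{1}^{-}+O(U^{-3/4+\varepsilon})$, which is not (4.60) and is off by quantities as large as $U^{1/4+\varepsilon}$.

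The same misconception appears in your closing "main obstacle" paragraph: you propose to verify that the band $\sqrt{U}/2<\sqrt{n}\leq\sqrt{U}$ (i.e.\ $S_{1/2}-S_{0}^{*}$) contributes "exactly $O_{\eta}(U^{-3/4+\varepsilon})$". It does not, and it must not: in the paper's proof each such band ($H_{\lambda}$, obtained by splitting the $S_{\lambda}$ of Lemma 4.1 at $\sqrt{U}/2$) is transformed, via the one-step saddle analysis on $(\sqrt{U}/2+h,\sqrt{U}+h)$ and the identities $G_{\lambda}(x_{n},\eta)=G_{\lambda+1}(-\sqrt{n},\eta)$, $W_{\lambda}(\eta)e(-x_{n}^{2})=W_{\lambda+1}(\eta)e(2\sqrt{n}(\sqrt{U}+\eta+\nu))$, into the main term $S_{\lambda+1}^{-}(\eta)$ plus $O_{\eta}(U^{-3/4+\varepsilon})$ ((4.64)--(4.72)); if the band were negligible, the sum $\sum_{\lambda}S_{\lambda}^{-}$ in (4.60) would simply not be there. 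So the fix is structural, not technical: keep the intermediate dual range as the object to be iterated (or, as the paper does, invoke Lemma 4.1 for the part above $\sqrt{U}$ and prove $H_{\lambda}=S_{\lambda+1}^{-}+O_{\eta}(U^{-3/4+\varepsilon})$ separately), and reserve Lemmas 3.4/3.5 for the genuinely lower-order pieces. A small further slip: $\{(\sqrt{U}/2)^{2}\}=\{U/4\}$ need not vanish (it can be $1/4,1/2,3/4$); what is needed, and what the paper uses, is only $0\leq\{U/4\}\leq3/4$ together with $(\sqrt{U}/2+h)^{2}-U/4\leq L^{-3/2}$, so that no integer is crossed.
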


\begin{proof} Sinc\begin{eqnarray*}
S_{\lambda}(\eta)&=&W_{\lambda}(\eta)\sum\limits_{\sqrt{n}\leq\sqrt{U}}
d(n)n^{-\frac{1}{4}}G_{\lambda}(\sqrt{n},\eta) \\
&& \times\sum\limits_{\alpha=0}^{2}\gamma_{\alpha}\frac{(-
i)^{\alpha}e(-2\sqrt{n}
(\sqrt{U}+\eta+\nu)-\frac{1}{8})}{(4\pi\sqrt{n}(\sqrt{U}+\sqrt{n}+\eta+\nu))^{\alpha}}\\
&=&W_{\lambda}(\eta)(\sum\limits_{\sqrt{n}\leq\frac{\sqrt{U}}{2}}+\sum\limits_{\frac{\sqrt{U}}{2}<\sqrt{n}\leq\sqrt{U}}),
\end{eqnarray*}
then by Lemma 4.1,
\begin{equation}S_{\frac{1}{2}}(\eta)=\sum\limits_{\lambda=1}^{\lambda_{0}}S_{\lambda}^{+}(\eta)
+\sum\limits_{\lambda=0}^{\lambda_{0}-1}H_{\lambda}(\eta)+O_{\eta}(U^{-3/4+\varepsilon}),\end{equation}
where $S_{\lambda_{0}}^{+}(\eta)\ll e^{-L^{2}}$ is used, and
\begin{eqnarray*}
H_{\lambda}(\eta)&=&W_{\lambda}(\eta)\sum\limits_{\frac{\sqrt{U}}{2}<\sqrt{n}\leq\sqrt{U}}
d(n)n^{-\frac{1}{4}}G_{\lambda}(\sqrt{n},\eta)\\
&& \times\sum\limits_{\alpha=0}^{2}\gamma_{\alpha}\frac{(-
i)^{\alpha}e(-2\sqrt{n}(\sqrt{U}+\eta+\nu)
-\frac{1}{8})}{(4\pi\sqrt{n}(\sqrt{U}+\sqrt{n}+\eta+\nu))^{\alpha}}\\
&=&W_{\lambda}(\eta)
\sum\limits_{\frac{\sqrt{U}}{2}<\sqrt{n}\leq\sqrt{U}}
d(n)n^{-\frac{1}{4}}G_{\lambda}(\sqrt{n},\eta)e(-2\sqrt{n}
(\sqrt{U}+\eta+\nu)-\frac{1}{8})
\end{eqnarray*}
\begin{equation}
+O(U^{-3/4+\varepsilon}),\ \ \ \ \ \ \ \ \ \ \ \ \ \ \ \ \ \ \ \ \ \
\ \ \ \ \ \ \ \ \ \ \ \ \ \ \ \ \ \ \ \ \ \ \ \ \ \ \ \
\end{equation}
 the $O$-term is given by the
proof of (4.58).

The following proof is similar to Lemma 4.1. Since
$$0\leq\{(\frac{\sqrt{U}}{2})^{2}\}\leq\frac{3}{4},(\frac{\sqrt{U}}{2}+h)^{2}-\frac{\sqrt{U}}{2})^{2}\leq\frac{1}{L^{3/2}}$$
for $0<h\leq\frac{1}{\sqrt{U}L^{2}}$, then we have the corresponding
(4.9). If replace $\sqrt{U}$ by $\sqrt{U}/2$ in (4.18), we have also
\begin{equation}e(\pm2\eta(\frac{\sqrt{U}}{2}))=1\end{equation}
for $\eta=\eta_{1}+\cdots+\eta_{K},\eta_{j}=0$  or $k_{1}/\sqrt{U}$.
Similarly, we obtain that

\begin{equation}
H_{\lambda}(\eta)=\sqrt{2}e(\frac{1}{8}) \sum\limits_{\sqrt{n}
\leq\frac{\sqrt{U}}{2}}
d(n)n^{-\frac{1}{4}}\sum\limits_{\alpha=0}^{2}\gamma_{\alpha}
(\frac{i}{4\pi})^{\alpha}j_{n\alpha}^{+}(\eta) +O_{\eta}(U^{-3/2}),
\end{equation}

(different from (4.42), present
main term is $j_{n\alpha}^{+}$, because
$x\in(\frac{\sqrt{U}}{2},\sqrt{U}+h)$) where
$$j_{n\alpha}^{+}(\eta)=\frac{W_{\lambda}(\eta)}{h_{0}}
\int_{0}^{h_{0}}dh\int_{\frac{\sqrt{U}}{2}+h}^{\sqrt{U}+h}\frac{G_{\lambda}(x,\eta)e(x^{2}+2x(\sqrt{n}-\sqrt{U}-\eta-\nu))dx}{x^{\alpha}}$$
\begin{equation}=\frac{W_{\lambda}(\eta)e(-x_{n}^{2})}{h_{0}}
\int_{0}^{h_{0}}dh\int_{\frac{\sqrt{U}}{2}+h}^{\sqrt{U}+h}\frac{G_{\lambda}(x,\eta)e((x-x_{n})^{2})dx}{x^{\alpha}},
\ \ \ \ \end{equation}
\begin{equation}x_{n}=\sqrt{U}+\eta+\nu-\sqrt{n}. \ \ \ \ \ \ \ \ \ \ \ \ \ \ \ \ \ \ \ \ \ \ \ \ \ \
\ \ \ \ \ \ \ \ \ \ \ \ \ \ \ \ \ \ \ \ \ \ \ \  \ \ \
\end{equation}
Repeating the proof of (4.51) we see that
\begin{eqnarray}
H_{\lambda}(\eta)&=&e(\frac{1}{8})W_{\lambda}(\eta)
\sum\limits_{\sqrt{n}\leq\frac{\sqrt{U}}{2}}
d(n)n^{-\frac{1}{4}}e(-x_{n}^{2})G_{\lambda}(x_{n},\eta)
\sum\limits_{\alpha=0}^{2}\gamma_{\alpha}(\frac{i}{4\pi})^{\alpha}\frac{1}{(4\pi\sqrt{n}x_{n})^{\alpha}}\notag\\
&&+O_{\eta}(U^{-3/4+\varepsilon}).
\end{eqnarray} Since
\begin{equation}G_{\lambda}(x_{n},\eta)=e^{-\frac{4\pi(\sqrt{U}+\eta+\nu-\sqrt{n}+\lambda(\sqrt{U}+\eta+\nu))^{2}}{A^{2}}}
=G_{\lambda+1}(-\sqrt{n},\eta) \ \ \ \ \ \ \ \end{equation}
$$W_{\lambda}(\eta)e(-x_{n}^{2})=e(-(\lambda+1)(\sqrt{U}+\eta+\nu)^{2}+2\sqrt{n}(\sqrt{U}+\eta+\nu)+n)$$
\begin{equation}=W_{\lambda+1}(\eta)e(2\sqrt{n}(\sqrt{U}+\eta+\nu)), \ \ \ \ \ \ \ \ \ \ \ \end{equation}
then by (4.69),
\begin{equation}H_{\lambda}(\eta)=S_{\lambda+1}^{-}(\eta)+O(U^{-3/4+\varepsilon}), \end{equation}
where $\varepsilon=O(\frac{1}{\log L})$ and $S_{\lambda}^{-}(\eta)$
is (4.62). Thus, we obtain (4.60) immediately by (4.63) and (4.72).
The proof of Lemma 4.2 is complete.
\end{proof}

\begin{lem} Let
\begin{equation}\lambda_{0}=b_{0}=[C_{0}^{2}L],\ C_{0}\geq200,\end{equation}
where $\eta$ is (1.15). If
$\frac{1}{8}\leq\{2\sqrt{U}\nu\}\leq\frac{3}{8}$, then
$${\rm Re}S_{\frac{1}{2}}(\eta)
=-{\rm Re}\sum\limits_{\sqrt{n}\leq \frac{\sqrt{U}}{2}}
d(n)n^{-\frac{1}{4}}e^{-\frac{4\pi
n}{A^{2}}}e(-2\sqrt{n}(\sqrt{U}+\eta+\nu)-\frac{1}{8})$$
\begin{equation}\times\sum\limits_{\alpha=0}^{2}\gamma_{\alpha}(\frac{-i}{4\pi\sqrt{n}(\sqrt{n}+\sqrt{U}
+\eta+\nu)})^{\alpha}+O_{\eta}(U^{-3/4+\varepsilon}),\end{equation}
where $S_{\frac{1}{2}}(\eta)$ is (4.59),
$\varepsilon=O(\frac{1}{\log L})$.
\end{lem}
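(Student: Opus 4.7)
The plan is to combine Lemma 4.2 with a theta-type Poisson cancellation. Lemma 4.2 gives
$$
S_{1/2}(\eta)=\sum_{\lambda=1}^{\lambda_0}S_\lambda^+(\eta)+\sum_{\lambda=1}^{\lambda_0}S_\lambda^-(\eta)+O_\eta(U^{-3/4+\varepsilon}),
$$
and the right-hand side of Lemma 4.3 is precisely $-\mathrm{Re}\,S_0^+(\eta)$ if one extends definition (4.61) to $\lambda=0$ (so $W_0\equiv1$ and $G_0(\sqrt n,\eta)=e^{-4\pi n/A^2}$). Hence it suffices to show
$$
\mathrm{Re}\Bigl[\sum_{\lambda=0}^{\lambda_0}S_\lambda^+(\eta)+\sum_{\lambda=1}^{\lambda_0}S_\lambda^-(\eta)\Bigr]=O_\eta(U^{-3/4+\varepsilon}).
$$

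The key manipulation is to conjugate the $S_\lambda^-$ sum and relabel $\lambda\mapsto-\mu$. Using $\overline{W_\lambda(\eta)}=W_{-\lambda}(\eta)$, $G_\lambda(-\sqrt n,\eta)=G_{-\lambda}(\sqrt n,\eta)$, $\overline{(i)^\alpha}=(-i)^\alpha$, and reality of the denominators, this conjugate becomes a sum over $\mu\in\{-\lambda_0,\ldots,-1\}$ matching $S_\mu^+$ in form, except that $D_+:=\sqrt U+\sqrt n+\eta+\nu$ is replaced by $D_-:=\sqrt U-\sqrt n+\eta+\nu$. Writing $\beta=\sqrt U+\eta+\nu$, $P(\sqrt n)=\sum_{\lambda=0}^{\lambda_0}W_\lambda G_\lambda(\sqrt n,\eta)$ and $R(\sqrt n)=\sum_{\mu=-\lambda_0}^{-1}W_\mu G_\mu(\sqrt n,\eta)$, the whole expression to be estimated becomes
$$
\mathrm{Re}\sum_n d(n)n^{-1/4}e(-2\sqrt n\beta-\tfrac18)\sum_{\alpha=0}^{2}\frac{\gamma_\alpha(-i)^\alpha}{(4\pi\sqrt n)^\alpha}\Bigl[\frac{P+R}{D_+^\alpha}+R\Bigl(\frac{1}{D_-^\alpha}-\frac{1}{D_+^\alpha}\Bigr)\Bigr].
$$

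For the $(P+R)/D_+^\alpha$ piece, $P+R$ equals the full theta sum $\Theta(\sqrt n)=\sum_{\lambda\in\mathbb Z}W_\lambda G_\lambda(\sqrt n,\eta)$ up to tails at $|\lambda|>\lambda_0$ of size $O(e^{-\pi C_0^2L})$ (guaranteed by the choice $\lambda_0=[C_0^2L]$). Poisson summation applied to $e^{-2\pi i\lambda\beta^2-4\pi(\sqrt n+\lambda\beta)^2/A^2}$ (in the style of Lemma 3.2, after completing the square) yields
$$
\Theta(\sqrt n)=\frac{A}{2\beta}\sum_{k\in\mathbb Z}e\Bigl(\frac{\sqrt n(k+\beta^2)}{\beta}\Bigr)e^{-\pi A^2(k+\beta^2)^2/(4\beta^2)}.
$$
Since $U\in\mathbb Z$ and $2\sqrt U\,\eta\in\mathbb Z$ by (4.20), the hypothesis $\{2\sqrt U\nu\}\in[\tfrac18,\tfrac38]$ forces $\{\beta^2\}\in[\tfrac18-O(L^{-2}),\tfrac38+O(L^{-2})]$, so $|k+\beta^2|\ge 1/9$ for every integer $k$; the Gaussian factor is then $\ll e^{-\pi C_0^2L/300}$ and $|\Theta(\sqrt n)|\ll\sqrt L\,e^{-\pi C_0^2L/300}$. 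Summed with weight $d(n)n^{-1/4}/D_+^\alpha$ over $n\le U/4$ this gives $O(U^{-99})$.

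The second piece vanishes for $\alpha=0$, so only $\alpha=1,2$ remain. Using $D_-^{-1}-D_+^{-1}=2\sqrt n/(\beta^2-n)$, the effective smooth factor for $\alpha=1$ is $f(n)=R(\sqrt n)/(2\pi(\beta^2-n))$ (and is even smaller for $\alpha=2$). Abel summation on $R$ yields $|R(\sqrt n)|=O(1)$ and $|R'(\sqrt n)|=O(U^{-1/2})$: the partial sums $\sum_{\lambda\le M}W_\lambda$ are bounded because $|1-e(-\beta^2)|\ge 2\sin(\pi/8)$ by the same fractional-part condition, while $G_\lambda(\sqrt n,\eta)$ and its $\sqrt n$-derivative are unimodal in $\lambda$ with total variations $O(1)$ and $O(\lambda_0\beta/A^2)=O(U^{-1/2})$ respectively. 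These imply $|f(n)|\ll M:=U^{-1}$ and $|f'(n)|\ll M/\sqrt{Un}$ on $n\le U/4$, which are precisely the hypotheses of Lemma 3.5 with $X\asymp U$; that lemma then bounds $\sum_{n\le U/4}d(n)n^{-1/4}f(n)\cos(\cdot)$ by $O(MU^{1/4+\varepsilon})=O(U^{-3/4+\varepsilon})$, and the $\alpha=2$ analogue is identical. The principal obstacle is this final step: the mismatch of $D_\pm^\alpha$ for $\alpha\ge 1$ precludes a direct theta cancellation, so one must squeeze the extra saving $U^{-1}$ from the $(\beta^2-n)^{-1}$ supplied by the denominator difference, combined with the $\sqrt n$-oscillation in $e(-2\sqrt n\beta)$ accessed through Lemma 3.5.
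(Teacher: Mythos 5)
Your proposal is correct and follows essentially the same route as the paper: starting from Lemma 4.2, handling the mismatch between the denominators $\sqrt{U}+\eta+\nu\pm\sqrt{n}$ via Lemma 3.5 (the paper's $\delta_\alpha(n)\ll\alpha/(\sqrt{U})^{\alpha+1}$ step, your $(\beta^2-n)^{-1}$ saving), then reflecting $\lambda\mapsto-\lambda$ with $\overline{W_\lambda}=W_{-\lambda}$, $G_\lambda(-\sqrt n,\eta)=G_{-\lambda}(\sqrt n,\eta)$ to assemble the full theta sum and killing it by the Lemma 3.2 transformation together with $\frac18\le\{2\sqrt U\nu\}\le\frac38$, leaving only the $\lambda=0$ term $-e^{-4\pi n/A^2}$. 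The only differences are cosmetic (order of the two reductions, explicit Poisson computation instead of citing Lemma 3.2, and the unneeded partial-summation cancellation in $R$, since $|R|\ll L$ already suffices).
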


\begin{proof} By the definitions (4.1) and (4.2) of
$G_{\lambda}(\sqrt{n},\eta)$ and $W_{\lambda}(\eta)$,
\begin{equation}G_{0}(\sqrt{n},\eta)=e^{-\frac{4\pi n}{A^{2}}},\end{equation}
\begin{equation}W_{0}(\eta)=1.\end{equation}
By Lemma 4.2,
\begin{equation}S_{\frac{1}{2}}(\eta)=\sum\limits_{\lambda=1}^{b_{0}}(S_{\lambda}^{+}(\eta)+S_{\lambda}^{-}(\eta))
+O_{\eta}(U^{-3/4+\varepsilon}),\ \varepsilon=O(\frac{1}{\log
L}).\end{equation} Since
\begin{equation}\frac{1}{(\sqrt{n}(\sqrt{U}+\eta+\nu-\sqrt{n}))^{\alpha}}
=\frac{1}{(\sqrt{n}(\sqrt{U}+\eta+\nu+\sqrt{n}))^{\alpha}}+(\sqrt{n})^{1-\alpha}\delta_{\alpha}(n),\end{equation}
where
\begin{eqnarray}
\delta_{\alpha}(n)&=&\frac{1}{\sqrt{n}}(\frac{1}{(\sqrt{U}+\eta+\nu-\sqrt{n})^{\alpha}}-\frac{1}{(\sqrt{U}+\eta+\nu+\sqrt{n})^{\alpha}})
\ll\frac{\alpha}{(\sqrt{U})^{\alpha+1}},\\
\delta_{\alpha}'(x)&\ll&\frac{\alpha}{\sqrt{xU}(\sqrt{U})^{\alpha+1}},
\end{eqnarray}
moreover, $\delta_{\alpha}(n)=0$ for $\alpha=0$. By Lemma 3.5 ,
\begin{equation}\sum\limits_{\alpha=1}^{2}\gamma_{\alpha}(\frac{i}{4\pi})^{\alpha}
\sum\limits_{\sqrt{n}\leq \frac{\sqrt{U}}{2}}
d(n)n^{-\frac{1}{4}}(\sqrt{n})^{1-\alpha}\delta_{\alpha}(n)G_{\lambda}
(-\sqrt{n},\eta)e(2\sqrt{n}(\sqrt{U}+\eta+\nu))\ll
U^{-3/4+\varepsilon}.\end{equation} This and (4.62) give
\begin{eqnarray}
S_{\lambda}^{-}(\eta)&=&W_{\lambda}(\eta)\sum\limits_{\sqrt{n}\leq
\frac{\sqrt{U}}{2}} d(n)n^{-\frac{1}{4}}G_{\lambda}(-\sqrt{n},\eta)
\sum\limits_{\alpha=0}^{2}\gamma_{\alpha}e(2\sqrt{n}(\sqrt{U}+\eta+\nu)+\frac{1}{8})\notag\\
&&\times(\frac{i}{4\pi\sqrt{n}(\sqrt{n}+\sqrt{U}+\eta+\nu)})^{\alpha}
+O(U^{-3/4+\varepsilon}),\ \varepsilon=O(\frac{1}{\log L}).
\end{eqnarray} It follows from (4.77), (4.82) and (4.61) that
\begin{eqnarray}
S_{\frac{1}{2}}(\eta)+\overline{S_{\frac{1}{2}}(\eta)}&=&
\sum\limits_{\sqrt{n}\leq \sqrt{U}}\sum\limits_{\alpha=0}^{2}
d(n)n^{-\frac{1}{4}}\gamma_{\alpha}
\frac{1}{(4\pi\sqrt{n}(\sqrt{n}+\sqrt{U}+\eta+\nu))^{\alpha}}\notag\\
&&\times((-i)^{\alpha}e(-2\sqrt{n}(\sqrt{U}+\eta+\nu)-\frac{1}{8})N(n)\notag\\
&&+\overline{(-i)^{\alpha}e(-2\sqrt{n}(\sqrt{U}+\eta+\nu)-\frac{1}{8})N(n)})
+O_{\eta}(U^{-3/4+\varepsilon}),
\end{eqnarray} where
\begin{equation}N(n)=\sum\limits_{\lambda=1}^{b_{0}}W_{\lambda}(\eta)G_{\lambda}(\sqrt{n},\eta)
+\sum\limits_{\lambda=1}^{b_{0}}\overline{W_{\lambda}(\eta)}G_{\lambda}(-\sqrt{n},\eta).\end{equation}
Putting $\lambda=-\lambda'$ in the second sum, note
\begin{equation}\overline{W_{-\lambda}(\eta)}=e(\overline{\lambda(\sqrt{U}+\eta+\nu)})=W_{\lambda}(\eta),\end{equation}
\begin{equation}G_{-\lambda}(-\sqrt{n},\eta)=e^{-\frac{4\pi(-\sqrt{n}-\lambda(\sqrt{U}+\eta+\nu))^{2}}{A^{2}}}=G_{\lambda}(\sqrt{n},\eta),\end{equation}
we have
\begin{eqnarray*}
N(n)&=&\sum\limits_{\lambda=-b_{0},\lambda\neq0}^{b_{0}}W_{\lambda}(\eta)G_{\lambda}(\sqrt{n},\eta)\\
&=&-W_{0}(\eta)G_{0}(\sqrt{n},\eta)+\sum\limits_{\lambda=-b_{0}}^{b_{0}}e^{-\frac{4\pi(\sqrt{n}
+\lambda(\sqrt{U}+\eta+\nu))^{2}}{A^{2}}}e(-\lambda(\sqrt{U}+\eta+\nu)^{2})\\
&=&-e^{-\frac{4\pi n}{A^{2}}}+\sum\limits_{\lambda
=-\infty}^{\infty}e^{-\frac{\pi(\lambda+b_{n})^{2}}{V_{1}}}e(-\lambda(\sqrt{U}+\eta+\nu)^{2})+O(U^{-10}),
\end{eqnarray*}
where
\begin{equation}V_{1}=\frac{A^{2}}{(\sqrt{U}+\eta+\nu)^{2}}, \ \ \ b_{n}=\frac{\sqrt{n}}{\sqrt{U}+\eta+\nu}.\end{equation}
Using
\begin{eqnarray*}
e(-\lambda(\sqrt{U}+\eta+\nu)^{2})&=&e(-\lambda(U+2\eta\sqrt{U}+2\nu\sqrt{U}+(\eta+\nu)^{2}))\\
&=&e(-\lambda(\{2\nu\sqrt{U}\}+(\eta+\nu)^{2})),
\end{eqnarray*}
for $U=[U], \ \ \eta=k_{1}/\sqrt{U}$, where $k_{1}$ is an integer,
we have
$$N(n)=-e^{-\frac{4\pi
n}{A^{2}}}+\sum\limits_{\lambda=-\infty}^{\infty}e^{-\frac{\pi}{V_{1}}(\lambda+b_{n})^{2}}e(-\lambda(\{2\nu\sqrt{U}\}+(\eta+\nu)^{2}))+O(U^{-10}).$$
By Lemma 3.2,
$$N(n)=\sqrt{V_{1}}\sum\limits_{\lambda=-\infty}^{\infty}e^{-\pi V_{1}(\lambda-\{2\nu\sqrt{U}\}-
(\eta+\nu)^{2})}e(b_{n}(\{2\nu\sqrt{U}\})+(\eta+\nu)^{2}-\lambda)-e^{-\frac{4\pi
n}{A^{2}}}+O(U^{-10}).$$ Since
$\frac{1}{8}\leq\{2\nu\sqrt{U}\}\leq\frac{3}{8},\
(\eta+\nu)^{2}=O(L^{-1})$ and
$$V_{1}=\frac{A^{2}}{(\sqrt{U}+\eta+\nu)^{2}}=C_{0}^{2}L(1+o(1))\geq\frac{200^{2}L}{2},$$
\noindent then $$N(n)=-e^{-\frac{4\pi n}{A^{2}}}+O(\sqrt{L}e^{-\frac{\pi C_{0}^{2}L}{200}})$$
\begin{equation}=-e^{-\frac{4\pi n}{A^{2}}}+O(U^{-10}).\end{equation}
 By (4.83) and (4.88) we obtain (4.74). The proof is complete.
\end{proof}

\ \

\section{EVALUATION of $S(B)$}

Now we evaluate \ $S(B)$ \ in (1.19), i.e.,
\begin{equation*}
S(B)=A\int_{-\frac{5\sqrt{L}}{A}}^{\frac{5\sqrt{L}}{A}}e^{-\pi
A^{2}\theta^{2}}d\theta \sum\limits_{\xi_{1}<\sqrt{n}-B-\theta\leq%
\xi_{2}}d(n)n^{\frac{-1}{4}},
\end{equation*}
where $A,\ B,\ \xi_{1},\ \xi_{2}, \cdots $ are (1.10)-(1.18).

By Lemma 2.4 and taking $\alpha _{0}=2$ in (1.12),
\begin{eqnarray}
S(B) &=&\sum\limits_{n\leq N}d(n)A\int_{-\frac{5\sqrt{L}}{A}}^{\frac{5\sqrt{L%
}}{A}}e^{-\pi A^{2}\theta ^{2}}d\theta \int_{(B+\xi _{1}+\theta
)^{2}}^{(B+\xi _{2}+\theta )^{2}}x^{-\frac{1}{4}}\Theta
(nx)dx+S_{l}(B)+O(\varepsilon _{1})  \notag \\
&=&2\sqrt{2}\sum\limits_{n\leq N}d(n)n^{-\frac{1}{4}}\sum\limits_{\alpha
=0}^{2}\frac{\gamma _{\alpha }A}{(4\pi \sqrt{n})^{\alpha }}\int_{-\frac{5%
\sqrt{L}}{A}}^{\frac{5\sqrt{L}}{A}}e^{-\pi A^{2}\theta ^{2}}j_{n\alpha
}(\theta )d\theta  \notag \\
&&+S_{l}(B)+O(U^{-\frac{3}{2}})+O(\varepsilon _{1})
\end{eqnarray}%
for any $\varepsilon _{1}>0$ as $N\geq N(\varepsilon _{1})$, where
\begin{eqnarray}
j_{n\alpha }(\theta ) &=&\int_{B+\xi _{1}+\theta }^{B+\xi _{2}+\theta }\frac{%
\cos (4\pi \sqrt{n}x+\frac{\pi }{4}+\frac{\alpha \pi }{2})dx}{x^{\alpha }}
\notag \\
&=&\mathrm{Re}\ e(\alpha /4+1/8)\int_{\xi _{1}}^{\xi _{2}}\frac{e(2\sqrt{n}%
(B+x+\theta ))dx}{(B+x+\theta )^{\alpha }},
\end{eqnarray}%
\begin{equation}
S_{l}(B)=A\int_{-\frac{5\sqrt{L}}{A}}^{\frac{5\sqrt{L}}{A}}e^{-\pi
A^{2}\theta ^{2}}d\theta \int_{(B+\xi _{1}+\theta )^{2}}^{(B+\xi _{2}+\theta
)^{2}}x^{\frac{-1}{4}}(\log x+2\gamma )dx.
\end{equation}%
The inner integral of $S_{l}(B)$ is
\begin{equation*}
2\int_{\xi _{1}+\theta }^{\xi _{2}+\theta }(B+x)^{\frac{1}{2}}(\log
(B+x)^{2}+2\gamma )dx=\sum\limits_{k=0}^{10}C_{k}\eta
^{k}+O(U^{-9}).
\end{equation*}%
Let $B^{\prime}=B-\eta=\sqrt{U}+\frac{k+j}{2\sqrt{U}}$ (see 1.11).
Then by (3.37),
\begin{equation}
S_{l}(B)=S_{l}(B^{\prime }+\eta )=O_{\eta }(U^{-9}).
\end{equation}%
By (5.2), we have%
\begin{eqnarray*}
j_{n\alpha }(\theta ) &=&\mathrm{Re}\ e(\alpha /4+1/8)(\frac{e(2\sqrt{n}%
(B+x+\theta ))}{4\pi i\sqrt{n}(B+x+\theta )^{\alpha }}%
\mbox {\LARGE \textbar
}_{x=\xi _{1}}^{x=\xi _{2}} \\
&&+O(\frac{1}{n}(1+\int_{\xi _{1}}^{\xi _{2}}\frac{\alpha dx}{(B+x+\theta
)^{\alpha +1}}))).
\end{eqnarray*}%
The integration by parts for $\theta $ gives
\begin{equation*}
A\int_{-\frac{5\sqrt{L}}{A}}^{\frac{5\sqrt{L}}{A}}e^{-\pi A^{2}\theta
^{2}}j_{n\alpha }(\theta )d\theta \ll \frac{A}{n}.
\end{equation*}%
Therefore,
\begin{equation}
\sum\limits_{U^{20}<n\leq N}d(n)n^{-\frac{1}{4}}\sum\limits_{\alpha =0}^{2}%
\frac{\gamma _{\alpha }A}{(4\pi \sqrt{n})^{\alpha }}\int_{-\frac{5\sqrt{L}}{A%
}}^{\frac{5\sqrt{L}}{A}}e^{-\pi A^{2}\theta ^{2}}j_{n\alpha }(\theta
)d\theta \ll \sum\limits_{U^{20}<n}d(n)n^{-\frac{5}{4}}A\ll U^{-4}.
\end{equation}%
By (5.1), (5.4) and (5.5) we obtain
\begin{equation}
S(B)=2\sqrt{2}\sum\limits_{n\leq U^{20}}d(n)n^{\frac{-1}{4}%
}\sum\limits_{\alpha =0}^{2}\frac{\gamma _{\alpha }A}{(4\pi \sqrt{n}%
)^{\alpha }}\int_{-\frac{5\sqrt{L}}{A}}^{\frac{5\sqrt{L}}{A}}e^{-\pi
A^{2}\theta ^{2}}j_{n\alpha }(\theta )d\theta +O_{\eta }(U^{-\frac{3}{2}}),\
(\varepsilon _{1}\rightarrow 0_{+}).
\end{equation}%
If $\sqrt{n}>C_{0}^{2}L\sqrt{U}$, then
\begin{equation*}
\int_{-\frac{5\sqrt{L}}{A}}^{\frac{5\sqrt{L}}{A}}e^{-\pi A^{2}\theta
^{2}}j_{n\alpha }(\theta )d\theta \ \ \ \ \ \ \ \ \ \ \ \ \ \ \ \ \ \ \ \ \
\ \ \ \ \ \ \ \ \ \ \ \ \ \ \ \ \ \ \ \ \ \ \ \ \ \ \ \ \ \ \ \ \ \ \ \ \ \
\ \ \ \ \ \ \ \ \ \ \ \ \ \ \ \ \ \ \ \ \ \ \ \ \ \
\end{equation*}
\begin{equation}
=\mathrm{Re}\ e(\alpha /4+1/8)\int_{\xi _{1}}^{\xi _{2}}e(2\sqrt{n}%
(B+x))dxA\int_{-\frac{5\sqrt{L}}{A}}^{\frac{5\sqrt{L}}{A}}e^{-\pi \theta
^{2}}\frac{e(2\sqrt{n}\theta )d\theta }{(B+x+\theta )^{\alpha }}.
\end{equation}%
Moving the inner integral line of (5.7), we have
\begin{equation}
A(\int_{-\frac{5\sqrt{L}}{A}}^{-\frac{5\sqrt{L}}{A}+\frac{i}{C_{0}\sqrt{U}}%
}+\int_{-\frac{5\sqrt{L}}{A}+\frac{i}{C_{0}\sqrt{U}}}^{\frac{5\sqrt{L}}{A}+%
\frac{i}{C_{0}\sqrt{U}}}+\int_{\frac{5\sqrt{L}}{A}+\frac{i}{C_{0}\sqrt{U}}}^{%
\frac{5\sqrt{L}}{A}})e^{-\pi A^{2}w^{2}}\frac{e(2\sqrt{n}w)dw}{%
(B+x+w)^{\alpha }}=I_{1}+I_{2}+I_{3},\ \ \ \ \mathrm{say}.
\end{equation}%
As $w\in I_{1}$, $w=-\frac{5\sqrt{L}}{A}+iv$, $0\leq v\leq \frac{1}{C_{0}%
\sqrt{U}}$, and
\begin{equation}
|e(2\sqrt{n}w)|=|e(2i\sqrt{n}v)|=e^{-4\pi \sqrt{n}v}.
\end{equation}%
Hence
\begin{eqnarray*}
I_{1} &\ll &A\int_{0}^{\frac{1}{C_{0}\sqrt{U}}}|e^{-\pi A^{2}(-\frac{5\sqrt{L%
}}{A}+iv)^{2}}|e^{-4\pi \sqrt{n}v}dv \\
&\ll &A\int_{0}^{\frac{1}{C_{0}\sqrt{U}}}e^{-\pi C_{0}^{2}LU(\frac{25L}{%
C_{0}^{2}U^{L}}-\frac{1}{C_{0}^{4}\sqrt{U}})}dv \\
&\ll &\frac{A}{\sqrt{U}}e^{-20\pi L}\ll U^{-20}.
\end{eqnarray*}%
In the same way, $I_{3}\ll U^{-20}$. When $w\in I_{2}$, $w=u+\frac{i}{C_{0}%
\sqrt{U}}$. By (5.9),
\begin{equation*}
|e(2\sqrt{n}w)|=e^{\frac{-4\pi \sqrt{n}}{C_{0}\sqrt{U}}}\ll e^{-4\pi
C_{0}L}\ll U^{-40},\ \mathrm{for}\ \sqrt{n}>C_{0}^{2}L\sqrt{U}.
\end{equation*}%
Hence
\begin{equation*}
I_{2}\ll U^{-20}.
\end{equation*}%
Therefore,%
\begin{equation}
\int_{-\frac{5\sqrt{L}}{A}}^{\frac{5\sqrt{L}}{A}}e^{-\pi A^{2}\theta
^{2}}j_{n\alpha }(\theta )d\theta \ll U^{-20}
\end{equation}%
for $\sqrt{n}>C_{0}^{2}L\sqrt{U}.$ If $b_{0}$ is (4.74), i.e.%
\begin{equation}
b_{0}=[C_{0}^{2}L],
\end{equation}%
then%
\begin{equation*}
\sum\limits_{(b_{0}+1/2)\sqrt{U}<\sqrt{n}\leq U^{10}}d(n)n^{\frac{-1}{4}%
}\sum\limits_{\alpha =0}^{2}\frac{\gamma _{\alpha }A}{(4\pi \sqrt{n}%
)^{\alpha }}\int_{-\frac{5\sqrt{L}}{A}}^{\frac{5\sqrt{L}}{A}}e^{-\pi
A^{2}\theta ^{2}}j_{n\alpha }(\theta )d\theta \ \ \ \ \ \ \ \ \ \ \ \ \ \ \
\
\end{equation*}%
\begin{equation*}
\ \ \ \ \ \ \ \ \ \ \ll \sum\limits_{(b_{0}+1/2)\sqrt{U}<\sqrt{n}\leq
U^{10}}d(n)n^{\frac{-1}{4}}U^{-20}\ll U^{-3}.
\end{equation*}%
This and (5.6) give
\begin{equation}
S(B)=2\sqrt{2}\sum\limits_{\sqrt{n}\leq (b_{0}+1/2)\sqrt{U}}d(n)n^{\frac{-1}{%
4}}\sum\limits_{\alpha =0}^{2}\frac{\gamma _{\alpha }A}{(4\pi \sqrt{n}%
)^{\alpha }}\int_{-\frac{5\sqrt{L}}{A}}^{\frac{5\sqrt{L}}{A}}e^{-\pi
A^{2}\theta ^{2}}j_{n\alpha }(\theta )d\theta +O_{\eta }(U^{-1}),
\end{equation}%
where $j_{n\alpha }(\theta )$ is (5.2). By (5.12) and (5.2),
\begin{equation}
S(B)=2\sqrt{2}\mathrm{Re}\int_{\xi _{1}}^{\xi _{2}}(T_{0}(B+\xi
)+T_{1}(B+\xi ))d\xi +O_{\eta }(U^{-1})
\end{equation}%
where $\xi _{1},\ \xi _{2}$ are (1.18),
\begin{equation*}
T_{0}(B+\xi )=\ \ \ \ \ \ \ \ \ \ \ \ \ \ \ \ \ \ \ \ \ \ \ \ \ \ \ \ \ \ \
\ \ \ \ \ \ \ \ \ \ \ \ \ \ \ \ \ \ \ \ \ \ \ \ \ \ \ \ \ \ \ \ \ \ \ \ \ \
\ \ \ \ \ \ \ \ \ \ \ \ \ \ \ \ \ \ \ \ \ \ \ \
\end{equation*}%
\begin{equation}
=e(1/8)\sum\limits_{\sqrt{n}\leq \sqrt{U}/2}d(n)n^{\frac{-1}{4}%
}\sum\limits_{\alpha =0}^{2}\frac{\gamma _{\alpha }Ai^{\alpha }}{(4\pi \sqrt{%
n})^{\alpha }}\int_{-\frac{5\sqrt{L}}{A}}^{\frac{5\sqrt{L}}{A}}e^{-\pi
A^{2}\theta ^{2}}\frac{e(2\sqrt{n}(B+\xi +\theta ))d\theta }{(B+\xi +\theta
)^{\alpha }},
\end{equation}%
\begin{equation*}
T_{1}(B+\xi )=\ \ \ \ \ \ \ \ \ \ \ \ \ \ \ \ \ \ \ \ \ \ \ \ \ \ \ \ \ \ \
\ \ \ \ \ \ \ \ \ \ \ \ \ \ \ \ \ \ \ \ \ \ \ \ \ \ \ \ \ \ \ \ \ \ \ \ \ \
\ \ \ \ \ \ \ \ \ \ \ \ \ \ \ \ \ \ \ \ \ \ \ \ \
\end{equation*}%
\begin{eqnarray}
&=&e(1/8)\sum\limits_{\sqrt{U}/2<\sqrt{n}\leq (b_{0}+1/2)\sqrt{U}}d(n)n^{%
\frac{-1}{4}}A\int_{-\frac{5\sqrt{L}}{A}}^{\frac{5\sqrt{L}}{A}}e^{-\pi
A^{2}\theta ^{2}}e(2\sqrt{n}(B+\xi +\theta ))d\theta \ \ \ \ \ \ \ \ \ \ \ \
\notag \\
&&+\delta _{1}(B+\xi )
\end{eqnarray}%
for $e(\frac{\alpha }{4})=i^{\alpha }$ with
\begin{equation*}
\delta _{1}(B+\xi )=e(1/8)\sum\limits_{\alpha =1}^{2}\frac{\gamma _{\alpha
}Ai^{\alpha }}{(4\pi \sqrt{n})^{\alpha }}\ \ \ \ \ \ \ \ \ \ \ \ \ \ \ \ \ \
\ \ \ \ \ \ \ \ \ \ \ \ \ \ \ \ \ \ \ \ \ \ \ \ \ \ \ \ \ \ \ \ \ \ \ \ \ \
\ \ \ \ \ \ \ \ \ \ \ \ \ \ \ \ \ \ \ \ \ \ \ \
\end{equation*}%
\begin{equation}
\ \ \ \ \ \ \ \times \int_{-\frac{5\sqrt{L}}{A}}^{\frac{5\sqrt{L}}{A}%
}e^{-\pi A^{2}\theta ^{2}}\cdot \sum\limits_{\sqrt{U}/2<\sqrt{n}\leq
(b_{0}+1/2)\sqrt{U}}\frac{d(n)n^{\frac{-1}{4}}e(2\sqrt{n}(B+\xi +\theta
))d\theta }{(\sqrt{n}(B+x+\theta ))^{\alpha }}.
\end{equation}%
Using Lemma 3.4 for the inner sum, we have
\begin{equation}
\delta _{1}(B+\xi )\ll U^{-1+\varepsilon }.
\end{equation}%
The inner integral of (5.15) is
\begin{equation*}
A\int_{-\infty }^{\infty }e^{-\pi A^{2}\theta ^{2}}e(2\sqrt{n}(\sqrt{U}+\xi
+\theta ))d\theta +O(e^{-L^{2}})=e^{\frac{-4\pi n}{A^{2}}}+O(e^{-L^{2}}).
\end{equation*}%
Hence
\begin{equation}
T_{1}(B+\xi )=e(1/8)\sum\limits_{\sqrt{U}/2<\sqrt{n}\leq (b_{0}+1/2)\sqrt{U}%
}d(n)n^{\frac{-1}{4}}e^{\frac{-4\pi n}{A^{2}}}e(2\sqrt{n}(B+\xi ))+O_{\eta
}(U^{-1+\varepsilon }).
\end{equation}%
Therefore,
\begin{equation}
S(B)=2\sqrt{2}\mathrm{Re}\int_{\xi _{1}}^{\xi _{2}}(T_{0}(B+\xi
)+T_{1}(B+\xi ))d\xi +O_{\eta }(U^{-1+\varepsilon }),
\end{equation}%
where $T_{0}(B+\xi )$ is (5.14), $T_{1}(B+\xi )$ is (5.18), $\xi _{1},\ \xi
_{2}$ are (1.18).

\ \

\section{EVALUATION of $T_{1}(B+\protect\xi)$}

We are going to evaluate (5.18). For $0<h\leq h_{0}=1/U$, $U=[X]$
\begin{equation*}
0<(\frac{\sqrt{U}}{2}+h)^{2}-\frac{U}{4}=\sqrt{U}h+h^{2}\leq \frac{1}{L},
\end{equation*}%
\begin{equation*}
0\leq \{\frac{U}{4}\}\leq \frac{3}{4}.
\end{equation*}%
Hence there doesn't exist any integer in the interval $((\frac{\sqrt{U}}{2}%
)^{2},(\frac{\sqrt{U}}{2}+h)^{2}]$. So that
\begin{equation}
\sum\limits_{\sqrt{U}/2<\sqrt{n}\leq \sqrt{U}/2+h}=0,\text{ for}\ 0<h\leq
h_{0}=1/U.
\end{equation}%
In the same way,
\begin{equation}
\sum\limits_{(b_{0}+1/2)\sqrt{U}<\sqrt{n}\leq (b_{0}+1/2)\sqrt{U}+h}=0.
\end{equation}%
It follows from (5.18) that
\begin{equation}
T_{1}(B+\xi )=\frac{e(1/8)}{h_{0}}\int_{0}^{h_{0}}dh\sum\limits_{\sqrt{U}/2<%
\sqrt{n}\leq (b_{0}+1/2)\sqrt{U}+h}d(n)n^{\frac{-1}{4}}e^{\frac{-4\pi n}{%
A^{2}}}e(2\sqrt{n}(B+\xi ))+O_{\eta }(U^{-1+\varepsilon }),
\end{equation}%
where $h_{0}=1/U$. By Lemma 2.3
\begin{eqnarray}
T_{1}(B+\xi ) &=&\sum\limits_{n\leq N}d(n)\frac{e(1/8)}{h_{0}}%
\int_{0}^{h_{0}}dh\int_{(\sqrt{U}/2+h)^{2}}^{((b_{0}+1/2)\sqrt{U}+h)^{2}}x^{%
\frac{-1}{4}}e^{\frac{-4\pi x}{A^{2}}}e(2\sqrt{x}(B+\xi ))\Theta (nx)dx
\notag \\
&&+T_{1l}(B+\xi )+O(\varepsilon _{1})
\end{eqnarray}%
for any $\varepsilon _{1}>0$, as $N\geq N(\varepsilon _{1})$, where $\Theta
(nx)$ is (2.12) and
\begin{equation*}
T_{1l}(B+\xi )=\frac{e(1/8)}{h_{0}}\int_{0}^{h_{0}}dh\int_{(\sqrt{U}%
/2+h)^{2}}^{((b_{0}+1/2)\sqrt{U}+h)^{2}}x^{\frac{-1}{4}}e^{\frac{-4\pi x}{%
A^{2}}}e(2\sqrt{x}(B+\xi ))(\log x+2\gamma )dx
\end{equation*}%
\begin{equation}
=\frac{2e(1/8)}{h_{0}}\int_{0}^{h_{0}}dh\int_{\sqrt{U}/2+h}^{(b_{0}+1/2)%
\sqrt{U}+h}x^{\frac{1}{2}}e^{\frac{-4\pi x^{2}}{A^{2}}}e(2x(B+\xi ))(\log
x^{2}+2\gamma )dx.
\end{equation}%
Changing the inner integral line of (6.5), we have%
\begin{equation*}
T_{1l}(B+\xi )=\frac{2e(1/8)}{h_{0}}\ \ \ \ \ \ \ \ \ \ \ \ \ \ \ \ \ \ \ \
\ \ \ \ \ \ \ \ \ \ \ \ \ \ \ \ \ \ \ \ \ \ \ \ \ \ \ \ \ \ \ \ \ \ \ \ \ \
\ \ \ \ \ \ \ \ \
\end{equation*}%
\begin{equation*}
\times \int_{0}^{h_{0}}dh(\int_{\sqrt{U}/2+h}^{\sqrt{U}/2+h+i}+\int_{\sqrt{U}%
/2+h+i}^{(b_{0}+1/2)\sqrt{U}+h+i}+\int_{(b_{0}+1/2)\sqrt{U}+h+i}^{(b_{0}+1/2)%
\sqrt{U}+h})e(2z(B+\xi ))f_{l}(z)dz
\end{equation*}%
\begin{equation}
=\frac{2e(1/8)}{h_{0}}\int_{0}^{h_{0}}dh(I_{1h}(B+\xi )+I_{2h}(B+\xi
)+I_{3h}(B+\xi )),\ \ \ \ \mathrm{say} \ \ \ \ \ \ \ \ \ \
\end{equation}%
where
\begin{equation}
f_{l}(z)=z^{1/2}e^{\frac{-4\pi z^{2}}{A}}(\log z^{2}+2\gamma ).
\end{equation}%
Furthermore,
\begin{equation*}
I_{1h}(B+\xi ) =\int_{0}^{i}e(2(B+\xi )(\sqrt{U}/2+h+z))f_{l}(\sqrt{U}%
/2+h+z)dz \ \ \ \ \ \ \ \ \ \ \ \ \ \ \ \ \
\end{equation*}
\begin{equation*}
=\int_{0}^{i}e(2(B^{\prime }+\xi )(\sqrt{U}/2+h+z))f_{l}(\sqrt{U}%
/2+h+z)e(\eta (\sqrt{U}+2h+2z))dz,
\end{equation*}
where $B=B^{\prime }+\eta $. By (1.14) and (1.15), $e(\eta \sqrt{U})=1$,
hence, by Lemma 3.6
\begin{equation*}
e(\eta (\sqrt{U}+2h+2z))=e(\eta (2h+2z))=O_{\eta }(U^{-99}).
\end{equation*}%
Therefore,%
\begin{equation*}
I_{1h}(B+\xi )\ll _{\eta }U^{-99}\int_{0}^{1}|f(\sqrt{U}/2+h+i\rho
)e(2(B^{\prime }+\xi )i\rho )|d\rho \ll _{\eta }U^{-20}.
\end{equation*}%
In the same way,
\begin{equation*}
I_{3h}(B+\xi )\ll _{\eta }U^{-20}.
\end{equation*}%
When $z\in I_{2h}(B+\xi )$, $z=x+i$. Thus,
\begin{equation*}
|e(2z(B+\xi ))f_{l}(z)|\ll \sqrt{U}L|e(2(B+\xi )i)|=\sqrt{U}Le^{-4\pi (B+\xi
)}.
\end{equation*}%
Hence
\begin{equation*}
I_{2h}(B+\xi )\ll \sqrt{U}Le^{-4\pi (B+\xi )}\int_{\sqrt{U}/2+h}^{(b_{0}+1/2)%
\sqrt{U}+h}dx\ll U^{-21}.
\end{equation*}%
Therefore, by (6.6),
\begin{equation}
T_{1l}(B+\xi )=O_{\eta }(U^{-19}).
\end{equation}%
Putting $x=x_{1}^{2}$ in the integral (6.4), taking $\alpha _{0}=2$ in
(2.12), noticing $2\cos 2\pi x=e(x)+e(-x),e(\frac{\alpha }{4})=i^{\alpha }$,
by (6.4) and (6.8) we obtain
\begin{equation}
T_{1}(B+\xi )=T_{1}^{+}(B+\xi )+T_{1}^{-}(B+\xi )+O_{\eta
}(U^{-19})+O(\varepsilon _{1}),
\end{equation}%
where
\begin{equation}
T_{1}^{\pm }(B+\xi )=\sqrt{2}\sum\limits_{n\leq
N}d(n)n^{-1/4}\sum\limits_{\alpha =0}^{2}\frac{\gamma _{\alpha }(\pm
i)^{\alpha }}{(4\pi \sqrt{n})^{\alpha }}j_{n\alpha }^{\pm }(B+\xi ),
\end{equation}%
\begin{equation}
j_{n\alpha }^{\pm }(B+\xi )=\frac{e(1/8\pm 1/8)}{h_{0}}\int_{0}^{h_{0}}dh%
\int_{\sqrt{U}/2+h}^{(b_{0}+1/2)\sqrt{U}+h}\frac{e^{\frac{-4\pi x^{2}}{A^{2}}%
}e(2x(B+\xi \pm \sqrt{n}))dx}{x^{\alpha }}.
\end{equation}%
If $\sqrt{n}>2\sqrt{U}$, then
\begin{eqnarray}
j_{n\alpha }^{\pm }(B+\xi ) &=&\frac{e(1/8\pm 1/8)}{4\pi ih_{0}}%
\int_{0}^{h_{0}}dh\frac{e^{\frac{-4\pi (x+h)^{2}}{A^{2}}}e(2(x+h)(B+\xi \pm
\sqrt{n}))}{(B+\xi \pm \sqrt{n})(x+h)^{\alpha }}\mbox {\LARGE \textbar
}_{x=\sqrt{U}/2}^{x=(b_{0}+1/2)\sqrt{U}}  \notag \\
&&+O(\frac{1}{(B+\xi \pm \sqrt{n})^{2}})  \notag \\
&\ll &\frac{1}{h_{0}(B+\xi \pm \sqrt{n})^{2}}\ll \frac{U}{n}
\end{eqnarray}%
Since
\begin{equation*}
\sum\limits_{U^{20}<n\leq N}d(n)n^{-1/4-\alpha /2}\frac{U}{n}\ll U^{-3}
\end{equation*}%
for $\alpha \geq 0$, then by (6.10),
\begin{equation}
T_{1}^{\pm }(B+\xi )=\sqrt{2}\sum\limits_{\sqrt{n}\leq
U^{10}}d(n)n^{-1/4}\sum\limits_{\alpha =0}^{2}\frac{\gamma _{\alpha }(\pm
i)^{\alpha }}{(4\pi \sqrt{n})^{\alpha }}j_{n\alpha }^{\pm }(B+\xi
)+O(U^{-3}).
\end{equation}%
Let $z=x+iy$, we see that
\begin{equation}
|e(2(B+\xi \pm \sqrt{n}))|=e(2iy(B+\xi \pm \sqrt{n}))=e^{-4\pi y(B+\xi \pm
\sqrt{n})}.
\end{equation}%
Using the path (6.6), repeating the proof of (6.8), we obtain
\begin{equation}
j_{n\alpha }^{+}(B+\xi )=O_{\eta }(U^{-19}),\ \mathrm{for}\ n\geq 1.
\end{equation}%
In the same way, if $\sqrt{n}\leq \sqrt{U}-L^{2}$, then the right side of
(6.14) is
\begin{equation*}
e^{-4\pi y(B+\xi -\sqrt{n})}\ll e^{-4\pi yL^{2}},\ \ y\geq 0.
\end{equation*}%
The path (6.6) gives also
\begin{equation}
j_{n\alpha }^{-}(B+\xi )=O_{\eta }(U^{-19}),\ \mathrm{for}\ \sqrt{n}\leq
\sqrt{U}-L^{2}.
\end{equation}%
If $\sqrt{n}>\sqrt{U}+L^{2}$, then moving the inner integral line of $%
j_{n\alpha }^{-}(B+\xi )$ to
\begin{equation*}
\int_{\sqrt{U}/2+h}^{\sqrt{U}/2+h-i}+\int_{\sqrt{U}/2+h-i}^{(b_{0}+1/2)\sqrt{%
U}+h-i}+\int_{(b_{0}+1/2)\sqrt{U}+h-i}^{(b_{0}+1/2)\sqrt{U}+h},
\end{equation*}%
noticing
\begin{equation*}
|e(2(B+\xi -\sqrt{n})z)|=e^{-4\pi |y(B+\xi -\sqrt{n})|}
\end{equation*}%
for $z=x+iy$, $y\leq 0$, we have also
\begin{equation}
j_{n\alpha }^{-}(B+\xi )\ll _{\eta }U^{-19},\ \mathrm{for}\ \sqrt{n}\leq
\sqrt{U}-L^{2}.
\end{equation}%
It follows from (6.9), (6.13), (6.15), (6.16) and (6.17) that
\begin{equation*}
T_{1}(B+\xi )=\sqrt{2}\sum\limits_{-L^{2}<\sqrt{n}-\sqrt{U}\leq
L^{2}}d(n)n^{-1/4}\sum\limits_{\alpha =0}^{2}\frac{\gamma _{\alpha
}(-i)^{\alpha }}{(4\pi \sqrt{n})^{\alpha }}j_{n\alpha }^{-}(B+\xi )
\end{equation*}%
\begin{equation}
+\delta +O_{\eta }(U^{-2})\ \ \ (\varepsilon _{1}\rightarrow 0_{+}),\ \ \ \
\ \ \ \ \ \ \ \ \
\end{equation}%
where $j_{n\alpha }^{-}(B+\xi )$ is (6.11) and
\begin{equation}
\delta \ll _{\eta }\sum\limits_{\sqrt{n}\leq
U^{10}}d(n)n^{-1/4}\sum\limits_{\alpha =0}^{2}\frac{U^{-19}}{(\sqrt{n}%
)^{\alpha }}\ll _{\eta }U^{-2}.
\end{equation}

Next, we evaluate $j_{n\alpha }^{-}(B+\xi )$, $\alpha =1,2.$ Using the
integration by parts, It is easy to know that
\begin{equation*}
j_{n\alpha }^{-}(B+\xi )\ll \frac{1}{(\sqrt{U})^{\alpha }|B+\xi -\sqrt{n}|},
\end{equation*}%
or,
\begin{eqnarray*}
j_{n\alpha }^{-}(B+\xi ) &\ll &\frac{1}{h_{0}}\int_{0}^{h_{0}}dh\int_{\sqrt{U%
}/2+h}^{(b_{0}+1/2)\sqrt{U}+h}\frac{dx}{(\sqrt{U})^{\alpha }} \\
&\ll &\frac{b_{0}\sqrt{U}}{(\sqrt{U})^{\alpha }}\ll L(\sqrt{U})^{1-\alpha }.
\end{eqnarray*}%
Hence
\begin{equation*}
j_{n\alpha }^{-}(B+\xi )\ll \min (\frac{L}{(\sqrt{U})^{1-\alpha }},\frac{1}{(%
\sqrt{U})^{\alpha }|B+\xi -\sqrt{n}|}).
\end{equation*}%
Moreover,
\begin{equation*}
\sum\limits_{\alpha =1}^{2}\frac{\gamma _{\alpha }(-i)^{\alpha }}{(4\pi
)^{\alpha }}\sum\limits_{-L^{2}<\sqrt{n}-\sqrt{U}\leq
L^{2}}d(n)n^{-1/4-\alpha /2}j_{n\alpha }^{-}(B+\xi )\ \ \ \ \ \ \ \ \ \ \ \
\ \ \ \ \ \ \ \ \ \ \ \ \
\end{equation*}%
\begin{eqnarray}
&\ll &L\sum\limits_{|B+\xi -\sqrt{n}|\leq \frac{1}{\sqrt{U}}%
}d(n)n^{-3/4}+\sum\limits_{|B+\xi -\sqrt{n}|\geq \frac{1}{\sqrt{U}},|\sqrt{n}%
-\sqrt{U}|\leq L^{2}}\frac{d(n)n^{-3/4}}{|(B+\xi )^{2}-n|}  \notag \\
&\ll &U^{-\frac{3}{4}+\varepsilon },\ 0<\varepsilon =O(\frac{1}{\log
L})
\end{eqnarray}%
This and (6.18), (6.19), (6.11) give
\begin{equation*}
T_{1}(B+\xi )=\sqrt{2}\sum\limits_{-L^{2}<\sqrt{n}-\sqrt{U}\leq
L^{2}}d(n)n^{-1/4}j_{n_{0}}^{-}(B+\xi )+\delta +O_{\eta }(U^{-\frac{3}{4}%
+\varepsilon })\ \ \ \ \ \ \ \ \ \ \ \ \ \ \ \ \ \ \ \ \ \ \ \ \ \
\end{equation*}%
\begin{equation*}
=\frac{\sqrt{2}}{h_{0}}\int_{0}^{h_{0}}dh\sum\limits_{-L^{2}<\sqrt{n}-\sqrt{U%
}\leq L^{2}}d(n)n^{-1/4}\int_{\sqrt{U}/2+h}^{(b_{0}+1/2)\sqrt{U}+h}e^{\frac{%
-4\pi x^{2}}{A^{2}}}e(2x(B+\xi -\sqrt{n}))dx
\end{equation*}%
\begin{equation}
+O_{\eta }(U^{-\frac{3}{4}+\varepsilon }),\ h_{0}=\frac{1}{U},\
0<\varepsilon =O(\frac{1}{\log L})\ \ \ \ \ \ \ \ \ \ \ \ \ \ \ \ \
\ \ \ \ \ \ \ \ \ \ \ \ \ \ \ \ \ \ \ \ \ \ \ \ .
\end{equation}%
By Lemma 3.4,
\begin{equation*}
\sum\limits_{-L^{2}<\sqrt{n}-\sqrt{U}\leq L^{2}}d(n)n^{-1/4}e(-2\sqrt{n}%
x)\ll x^{1/4+\varepsilon }\ll U^{1/4+\varepsilon }.
\end{equation*}%
Hence
\begin{equation*}
\frac{1}{h_{0}}\int_{0}^{h_{0}}dh(\int_{\sqrt{U}/2+h}^{\sqrt{U}%
/2}+\int_{(b_{0}+1/2)\sqrt{U}}^{(b_{0}+1/2)\sqrt{U}+h})e^{\frac{-4\pi x^{2}}{%
A^{2}}}\sum\limits_{-L^{2}<\sqrt{n}-\sqrt{U}\leq
L^{2}}d(n)n^{-1/4}e(2x(B+\xi -\sqrt{n}))
\end{equation*}%
\begin{equation}
\ll \frac{U^{1/4+\varepsilon }}{h_{0}}\int_{0}^{h_{0}}hdh\ll U^{-\frac{3}{4}%
+\varepsilon },\ 0<\varepsilon =O(\frac{1}{\log L}). \ \ \ \ \ \ \ \
\ \ \ \ \ \ \ \ \ \ \ \ \ \ \ \ \ \ \ \ \ \ \ \ \ \ \ \ \ \ \ \ \ \
\ \ \ \ \
\end{equation}%
It follows from (6.21) and (6.22) that
\begin{equation*}
T_{1}(B+\xi )\ \ \ \ \ \ \ \ \ \ \ \ \ \ \ \ \ \ \ \ \ \ \ \ \ \ \ \ \ \ \ \
\ \ \ \ \ \ \ \ \ \ \ \ \ \ \ \ \ \ \ \ \ \ \ \ \ \ \ \ \ \ \ \ \ \ \ \ \ \
\ \ \ \ \ \ \ \ \ \ \ \ \ \ \ \ \ \ \ \ \ \ \ \ \ \ \ \ \ \ \ \ \ \ \ \ \ \
\ \ \ \ \ \ \ \ \ \
\end{equation*}%
\begin{equation}
=\sqrt{2}\sum\limits_{-L^{2}<\sqrt{n}-\sqrt{U}\leq L^{2}}d(n)n^{-1/4}\int_{%
\sqrt{U}/2}^{(b_{0}+1/2)\sqrt{U}}e^{\frac{-4\pi (x)^{2}}{A^{2}}}e(2x(B+\xi -%
\sqrt{n}))dx+O_{\eta }(U^{-\frac{3}{4}+\varepsilon }),
\end{equation}%
where $b_{0}=[C_{0}^{2}L]$, $0<\varepsilon =O(\frac{1}{\log L})$.

\ \

\section{EVALUATION of SUM $\sum\limits_{k=0}^{m}T_{1}(B+\protect\xi)$}

Let
\begin{equation}
B_{1}=\sqrt{U}+\frac{j}{2\sqrt{U}}+\eta +\xi ,
\end{equation}%
then by (1.10),
\begin{equation}
B+\xi =B_{1}+\frac{k}{2\sqrt{U}}.
\end{equation}%
Moreover, denote
\begin{equation}
Q_{1}(B_{1})=\sum\limits_{k=0}^{m}T_{1}(B+\xi
)=\sum\limits_{k=0}^{m}T_{1}(B_{1}+\frac{k}{2\sqrt{U}}),
\end{equation}%
where $T_{1}(B)$ is (6.27). In this section we are going to prove that
\begin{eqnarray}
\mathrm{Re}(Q_{1}(B_{1})) &=&\mathrm{Re}\sum\limits_{k=0}^{m}T_{1}(B_{1}+%
\frac{k}{2\sqrt{U}})  \notag \\
&=&-\frac{\sqrt{U}}{\sqrt{2}}\sum\limits_{0<\sqrt{n}-B_{1}\leq \frac{m}{2%
\sqrt{U}}}d(n)n^{-1/4}+\mathrm{Re}\ \delta _{Q_{1}}(B_{1}),
\end{eqnarray}%
where $\delta _{Q_{1}}(B_{1})\ll U^{\frac{1}{4}+\varepsilon }$ , $%
0<\varepsilon =O(\frac{1}{\log L})=O(\frac{1}{\log \log U})$.

\ \

\textit{Proof} of (7.4): By (6.23), (7.1), (7.2 ) and (7.3),
\begin{eqnarray}
Q_{1}(B_{1}) &=&\sqrt{2}\sum\limits_{-L^{2}<\sqrt{n}-\sqrt{U}\leq
L^{2}}d(n)n^{-1/4}\int_{\frac{\sqrt{U}}{2}}^{(b_{0}+1/2)\sqrt{U}}e^{-\frac{%
4\pi x^{2}}{A^{2}}}e(2(B_{1}-\sqrt{n})x)\rho (x)dx  \notag \\
&&+O_{\eta }(U^{-\frac{1}{4}+\varepsilon })
\end{eqnarray}%
for $m\ll \sqrt{U}L^{-2}$, where $0<\varepsilon =O(\frac{1}{\log L})$ and%
\begin{equation}
\rho (x)=\sum\limits_{k=0}^{m}e(\frac{xk}{\sqrt{U}}).
\end{equation}%
By Lemma 3.3,
\begin{eqnarray}
\rho (x) &=&\frac{1}{2}+\frac{1}{2}e(\frac{xm}{\sqrt{U}})+O(U^{-9})+\sum%
\limits_{b=-U^{10}}^{U^{10}}\int_{0}^{m}e((\frac{x}{\sqrt{U}}-b)u)du  \notag
\\
&=&\rho _{0}(x)+\rho _{1}(x)+\rho _{2}(x)+O(U^{-9}),
\end{eqnarray}%
where
\begin{eqnarray}
\rho _{0}(x) &=&\sum\limits_{b=1}^{b_{0}}\int_{0}^{m}e((\frac{x}{\sqrt{U}}%
-b)u)du, \\
\rho _{1}(x) &=&\frac{1}{2}+\frac{1}{2}e(\frac{xm}{\sqrt{U}})=\frac{1}{2}%
\sum\limits_{\nu =o,m}e(\frac{x\nu }{\sqrt{U}}), \\
\rho _{2}(x) &=&\sum_{b}\ ^{\prime }\int_{0}^{m}e((\frac{x}{\sqrt{U}}%
-b)u)du=\sum_{b}\ ^{\prime }\frac{e(\frac{xu}{\sqrt{U}})}{2\pi i(\frac{x}{%
\sqrt{U}}-b)}\mbox {\LARGE \textbar }_{u=0}^{u=m}
\end{eqnarray}%
for $e(-b\nu )=1,\ \nu =0,m$, and $\sum^{\prime }$ is for $U^{-10}\leq b\leq
0$, or $b_{0}+1\leq b\leq U^{10}$. It follows from (7.5) and (7.7) that
\begin{equation}
Q_{1}(B_{1})=Q_{10}(B_{1})+Q_{11}(B_{1})+Q_{12}(B_{1})+O_{\eta }(U^{-1}),
\end{equation}%
where
\begin{equation}
Q_{1j}(B_{1})=\sqrt{2}\sum\limits_{-L^{2}<\sqrt{n}-\sqrt{U}\leq
L^{2}}d(n)n^{-1/4}J_{j}(n),\ j=0,1,2,
\end{equation}%
\begin{equation}
J_{j}(n)=\int_{C}e^{-\frac{4\pi z^{2}}{A^{2}}}e(2(B_{1}-\sqrt{n})z)\rho
_{j}(z)dz=\int_{C_{A}^{+}}=\int_{C_{A}^{-}},
\end{equation}%
$C$ is from $\frac{\sqrt{U}}{2}$ to $(b_{0}+1/2)\sqrt{U}$, $C_{A}^{\pm }$ is
the broken line joining the points $\frac{\sqrt{U}}{2},\frac{\sqrt{U}}{2}\pm
iA/2$, $(b_{0}+1/2)\sqrt{U}\pm iA/2\ and\ (b_{0}+1/2)\sqrt{U}.$

\begin{picture}(400,200)
\put(100,60){\line(1,0){200}} \put(100,150){\line(1,0){200}}
\put(100,60){\line(0,1){90}} \put(300,60){\line(0,1){90}}
\put(100,105){\circle*{4}} \put(300,105){\circle*{4}}
\put(100,105){\vector(0,1){30}} \put(100,105){\vector(0,-1){30}}
\put(300,150){\vector(0,-1){40}}
\put(300,60){\vector(0,1){40}}\put(85,105){\makebox(2,2)[c]{$\frac{\sqrt{U}}{2}$}}\put(330,105){\makebox(7,2)[c]{$(b_0+\frac{1}{2})\sqrt{U}$}}
\put(200,50){\makebox(2,2)[c]{$C^-_A$}}\put(200,160){\makebox(2,2)[c]{$C^+_A$}}\put(200,20){\makebox(5,1)[c]{Fig.
7.1}}
\end{picture}

By (7.10) and (7.13),
\begin{equation}
J_{2}(n)=\sum_{b}\ ^{\prime }\int_{C}\frac{e^{-\frac{4\pi z^{2}}{A^{2}}%
}e(2(B_{1}+\frac{u}{2\sqrt{U}}-\sqrt{n})z)dz}{2\pi i(\frac{z}{\sqrt{U}}-b)}%
\mbox {\LARGE \textbar
}_{u=0}^{u=m}.
\end{equation}%
It is obvious that the integrand is analytic for $\frac{\sqrt{U}}{2}\leq
\mathrm{Re}\ z\leq (b_{0}+1/2)\sqrt{U}$, as $U^{-10}\leq b\leq 0$, or $%
b_{0}+1\leq b\leq U^{10}$. For $z=x+iy$, $\frac{\sqrt{U}}{2}\leq x\leq
(b_{0}+1/2)\sqrt{U}$, $-A/2\leq y\leq A/2$, we have
\begin{equation}
|e^{-\frac{4\pi z^{2}}{A^{2}}}|=e^{-\frac{4\pi (x^{2}-y^{2})}{A^{2}}}\ll e^{-%
\frac{4\pi x^{2}}{A^{2}}},
\end{equation}%
\begin{equation}
|e(2(B_{1}+\frac{u}{2\sqrt{U}}-\sqrt{n})z)|=e^{-4\pi y(B_{1}+\frac{u}{2\sqrt{%
U}}-\sqrt{n})}.
\end{equation}%
If $\sqrt{n}\leq B_{1}+\frac{u}{2\sqrt{U}}$, then moving the integral line $%
C $ to $C_{A}^{+}$, we see that
\begin{eqnarray}
\int_{C} =\int_{C_{A}^{+}}&=&(\int_{\frac{\sqrt{U}}{2}}^{\frac{\sqrt{U}}{2}+%
\frac{iA}{2}}+\int_{\frac{\sqrt{U}}{2}+\frac{iA}{2}}^{(b_{0}+1/2)\sqrt{U}+%
\frac{iA}{2}}+\int_{(b_{0}+1/2)\sqrt{U}+\frac{iA}{2}}^{(b_{0}+1/2)\sqrt{U}})
\notag \\
&&\times \frac{e^{-\frac{4\pi z^{2}}{A^{2}}}e(2(B_{1}+\frac{u}{2\sqrt{U}}-%
\sqrt{n})z)dz}{2\pi i(\frac{z}{\sqrt{U}}-b)}  \notag \\
&=&I_{1}+I_{2}+I_{3},\ \ \ \mathrm{say}
\end{eqnarray}%
Clearly, for $b\leq 0$ or $b\geq b_{0}+1$ and
\begin{equation}
\frac{1}{\frac{z}{\sqrt{U}}-b}\ll \frac{1}{|b|+1},\ z\in C_{A}^{\pm },
\end{equation}%
we have
\begin{eqnarray*}
I_{1} &\ll &\int_{0}^{\frac{A}{2}}\frac{e^{-4\pi y(B_{1}+\frac{u}{2\sqrt{U}}-%
\sqrt{n})}dy}{|b|+1} \\
&\ll &\frac{1}{(|b|+1)(B_{1}+\frac{u}{2\sqrt{U}}-\sqrt{n})}=\frac{1}{%
(|b|+1)|B_{1}+\frac{u}{2\sqrt{U}}-\sqrt{n}|}.
\end{eqnarray*}%
In the same way
\begin{equation*}
I_{3}\ll \frac{1}{(|b|+1)|B_{1}+\frac{u}{2\sqrt{U}}-\sqrt{n}|}.
\end{equation*}%
By (7.15)-(7.18),
\begin{eqnarray*}
I_{2} &\ll &\frac{1}{|b|+1}e^{-2\pi A(B_{1}+\frac{u}{2\sqrt{U}}-\sqrt{n}%
)}\int_{\frac{\sqrt{U}}{2}}^{(b_{0}+1/2)\sqrt{U}}e^{-\frac{4\pi x^{2}}{A^{2}}%
}dx \\
&\ll &\frac{A}{|b|+1}e^{-2\pi A(B_{1}+\frac{u}{2\sqrt{U}}-\sqrt{n})} \\
&=&\frac{1}{(|b|+1)|B_{1}+\frac{u}{2\sqrt{U}}-\sqrt{n}|}(A|B_{1}+\frac{u}{2%
\sqrt{U}}-\sqrt{n}|e^{-2\pi A|B_{1}+\frac{u}{2\sqrt{U}}-\sqrt{n}|}) \\
&\ll &\frac{1}{(|b|+1)|B_{1}+\frac{u}{2\sqrt{U}}-\sqrt{n}|}.
\end{eqnarray*}%
Hence the integral (7.17) is
\begin{equation}
\int_{C}\ll \frac{1}{(|b|+1)|B_{1}+\frac{u}{2\sqrt{U}}-\sqrt{n}|}
\end{equation}%
for $\sqrt{n}\leq B_{1}+\frac{u}{2\sqrt{U}}$. If $\sqrt{n}>B_{1}+\frac{u}{2%
\sqrt{U}}$, then moving the path $C$ to $C_{A}^{-}$, we have also (7.19).
Hence, by (7.14) we obtain
\begin{eqnarray}
J_{2}(n) &\ll &\sum\limits_{u=0,m}\frac{1}{|B_{1}+\frac{u}{2\sqrt{U}}-\sqrt{n%
}|}\sum_{b}\ ^{\prime }\frac{1}{(|b|+1)}  \notag \\
&\ll &L\sum\limits_{u=0,m}\frac{1}{|B_{1}+\frac{u}{2\sqrt{U}}-\sqrt{n}|}.
\end{eqnarray}%
In the same way,
\begin{equation}
J_{1}\ll \sum\limits_{\nu =0,1}\frac{1}{|B_{1}+\frac{\nu }{2\sqrt{U}}-\sqrt{n%
}|}=\sum\limits_{u=0,m}\frac{1}{|B_{1}+\frac{u}{2\sqrt{U}}-\sqrt{n}|}.
\end{equation}%
Therefore, it follows from (7.12),(7.20), (7.21) and Lemma 3.1 that
\begin{eqnarray}
Q_{11}(B_{1})+Q_{12}(B_{1}) \ll \sum\limits_{u=0,m}\sum\limits_{-L^{2}<\sqrt{%
n}-\sqrt{U}\leq L^{2}}\frac{Ld(n)n^{-1/4}}{|B_{1}+\frac{u}{2\sqrt{U}}-\sqrt{n%
}|} &  \notag \\
\ll U^{\frac{1}{4}+\varepsilon }\sum\limits_{u=0,m}\sum\limits_{-L^{2}\sqrt{U%
}<n-U\leq L^{2}\sqrt{U}}\frac{1}{|n-(B_{1}+\frac{u}{2\sqrt{U}})^{2}|} &
\notag \\
\ll U^{\frac{1}{4}+\varepsilon }(1+\sum\limits_{u=0,m}(\frac{1}{\{(B_{1}+%
\frac{u}{2\sqrt{U}})^{2}\}}+\frac{1}{1-\{(B_{1}+\frac{u}{2\sqrt{U}})^{2}\}}%
,)),&
\end{eqnarray}%
where $\varepsilon =O((\log L)^{-1})$. Moreover,
\begin{eqnarray*}
\{(B_{1}+\frac{u}{2\sqrt{U}})^{2}\} &=&\{(\sqrt{U}+\frac{j+u}{2\sqrt{U}}%
+\eta +\xi )^{2}\} \\
&=&\{U+j+u+2\sqrt{U}(\eta +\xi )+(\frac{j+u}{2\sqrt{U}}+\eta +\xi )^{2}\} \\
&=&\{2\sqrt{U}\xi +O(L^{-1})\}
\end{eqnarray*}%
for $u=0,m,\ 2\sqrt{U}\eta =2k_{1}$, $j,u\ll \sqrt{U}L^{-2},\ \eta \ll
L^{-1} $. Hence by (1.18): $\frac{1}{8}\leq 2\sqrt{U}\xi \leq \frac{3}{8},$
we have
\begin{equation}
\frac{1}{9}\leq \{(B_{1}+\frac{u}{2\sqrt{U}})^{2}\}\leq \frac{4}{9}.
\end{equation}%
Furthermore, by (7.22) and (7.23), we have
\begin{equation}
Q_{11}(B_{1})+Q_{12}(B_{1})=\delta _{Q_{11}}\ll U^{\frac{1}{4}+\varepsilon }.
\end{equation}%
This and (7.11) give
\begin{eqnarray}
Q_{1}(B_{1}) &=&Q_{10}(B_{1})+\delta _{Q_{11}}  \notag \\
&=&\sqrt{2}\sum\limits_{-L^{2}<\sqrt{n}-\sqrt{U}\leq
L^{2}}d(n)n^{-1/4}J_{0}(n)+\delta _{Q_{11}}  \notag \\
&=&\sqrt{2}\sum\limits_{b=1}^{b_{0}}\int_{0}^{m}du\sum\limits_{-L^{2}<\sqrt{n%
}-\sqrt{U}\leq L^{2}}d(n)n^{-1/4}\int_{C}f_{nb}(z,u)dz+\delta _{Q_{11}},
\end{eqnarray}%
where $C$ is from $\frac{\sqrt{U}}{2}$ to $(b_{0}+\frac{1}{2})\sqrt{U}$, $%
\delta _{Q_{11}}\ll U^{\frac{1}{4}+\varepsilon }$ and
\begin{equation}
f_{nb}(z,u)=e^{-\frac{4\pi z^{2}}{A^{2}}}e(2(B_{1}-\sqrt{n})z+(\frac{z}{%
\sqrt{U}}-b)u).
\end{equation}%
Clearly,
\begin{eqnarray}
\int_{0}^{m}du\sum\limits_{-L^{2}<\sqrt{n}-\sqrt{U}\leq L^{2}}
&=&\int_{0}^{m}du(\sum\limits_{\sqrt{U}-L^{2}<\sqrt{n}\leq B_{1}+\frac{u}{2%
\sqrt{U}}}+\sum\limits_{B_{1}+\frac{u}{2\sqrt{U}}<\sqrt{n}\leq \sqrt{U}%
+L^{2}})  \notag \\
&=&\sum\limits_{\sqrt{U}-L^{2}<\sqrt{n}\leq B_{1}+\frac{m}{2\sqrt{U}}%
}\int_{M_{1}(n)}^{m}+\sum\limits_{B_{1}<\sqrt{n}\leq \sqrt{U}%
+L^{2}}\int_{0}^{M_{2}(n)},
\end{eqnarray}%
where
\begin{equation}
M_{1}(n)=\max (0,2\sqrt{U}(\sqrt{n}-B_{1})),
\end{equation}%
\begin{equation}
M_{2}(n)=\min (m,2\sqrt{U}(\sqrt{n}-B_{1})).
\end{equation}%
Using
\begin{equation*}
\int_{C}f_{nb}(z,u)dz=\int_{C_{A}^{+}}=\int_{C_{A}^{-}},
\end{equation*}%
where $C_{A}^{\pm }$ be as Fig 7.1, by (7.25) and (7.27) we have%
\begin{equation}
Q_{1}(B_{1})=\sqrt{2}\sum\limits_{b=1}^{b_{0}}(w^{+}(b)+w^{-}(b))+\delta
_{Q_{11}},
\end{equation}%
where
\begin{equation*}
\delta _{Q_{11}}=O(U^{\frac{1}{4}+\varepsilon }),\ \varepsilon =O((\log
L)^{-1}),
\end{equation*}%
\begin{eqnarray}
w^{+}(b) =\sum\limits_{\sqrt{U}-L^{2}<\sqrt{n}\leq B_{1}+\frac{m}{2\sqrt{U}}%
}d(n)n^{-1/4}\int_{C_{A}^{+}}dz\int_{M_{1}(n)}^{m}f_{nb}(z,u)du \ \ \ \ \ \
\ \ \ \ \ \ \ \ \ \ \ \ \ \ \ \ \ \   \notag \\
=\sum\limits_{\sqrt{U}-L^{2}<\sqrt{n}\leq B_{1}+\frac{m}{2\sqrt{U}}%
}d(n)n^{-1/4}\int_{C_{A}^{+}}\frac{e^{-\frac{4\pi z^{2}}{A^{2}}}e(2(B_{1}+%
\frac{u}{2\sqrt{U}}-\sqrt{n})z-bu)}{2\pi i(\frac{z}{\sqrt{U}}-b)}%
\mbox
{\LARGE \textbar }_{u=M_{1}(n)}^{u=m}.
\end{eqnarray}%
In the same way,
\begin{equation}
w^{-}(b)=\sum\limits_{B_{1}<\sqrt{n}\leq \sqrt{U}+L^{2}}d(n)n^{-1/4}%
\int_{C_{A}^{-}}\frac{e^{-\frac{4\pi z^{2}}{A^{2}}}e(2(B_{1}+\frac{u}{2\sqrt{%
U}}-\sqrt{n})z-bu)}{2\pi i(\frac{z}{\sqrt{U}}-b)}\mbox {\LARGE \textbar
}_{u=0}^{u=M_{2}(n)}
\end{equation}%
For a function $g_{n}(u)$, by (7.28) we have%
\begin{eqnarray}
\sum\limits_{\sqrt{U}-L^{2}<\sqrt{n}\leq B_{1}+\frac{m}{2\sqrt{U}}}g_{n}(u)%
\mbox {\LARGE \textbar }_{u=M_{1}(n)}^{u=m} \ \ \ \ \ \ \ \ \ \ \ \ \ \ \ \
\ \ \ \ \ \ \ \ \ \ \ \ \ \ \ \ \ \ \ \ \ \ \ \ \ \ \ \ \ \ \ \ \ \ \ \ \ \
\ \ \   \notag \\
=\sum\limits_{\sqrt{U}-L^{2}<\sqrt{n}\leq B_{1}}g_{n}(u)%
\mbox {\LARGE \textbar
}_{u=0}^{u=m}+\sum\limits_{B_{1}<\sqrt{n}\leq B_{1}+\frac{m}{2\sqrt{U}}%
}g_{n}(u)\mbox {\LARGE \textbar
}_{u=2\sqrt{U}(\sqrt{n}-\sqrt{U})}^{u=m} \ \ \ \ \   \notag \\
=\sum\limits_{\sqrt{U}-L^{2}<\sqrt{n}\leq B_{1}+\frac{u}{2\sqrt{U}}}g_{n}(u)%
\mbox {\LARGE \textbar
}_{u=0}^{u=m}-\sum\limits_{0<\sqrt{n}-B_{1}\leq \frac{m}{2\sqrt{U}}}g_{n}(2%
\sqrt{U}(\sqrt{n}-B_{1}))
\end{eqnarray}%
Likewise ,%
\begin{equation*}
\sum\limits_{B_{1}<\sqrt{n}\leq \sqrt{U}+L^{2}}g_{n}(u)%
\mbox {\LARGE
\textbar }_{u=0}^{u=M_{2}(n)}=\ \ \ \ \ \ \ \ \ \ \ \ \ \ \ \ \ \ \ \ \ \ \
\ \ \ \ \ \ \ \ \ \ \ \ \ \ \ \ \ \ \ \ \ \ \ \ \ \ \ \ \ \ \ \ \ \ \ \ \ \
\ \ \ \ \ \ \ \ \
\end{equation*}

\begin{equation}
=\sum\limits_{B_{1}+\frac{u}{2\sqrt{U}}<\sqrt{n}\leq \sqrt{U}+L^{2}}g_{n}(u)%
\mbox {\LARGE \textbar
}_{u=0}^{u=m}+\sum\limits_{0<\sqrt{n}-B_{1}\leq \frac{m}{2\sqrt{U}}}g_{n}(2%
\sqrt{U}(\sqrt{n}-B_{1}))
\end{equation}%
Noticing $e(-ub)=1$ for $u=0,m$, and
\begin{equation*}
e(2(B_{1}+\frac{u}{2\sqrt{U}}-\sqrt{n})z-bu)\mbox {\LARGE \textbar
}_{u=2\sqrt{U}(\sqrt{n}-B_{1})}=\ \ \ \ \ \ \ \ \ \ \ \ \ \ \ \ \ \ \ \ \ \
\ \ \ \ \ \ \ \ \ \ \ \ \ \ \ \ \ \
\end{equation*}%
\begin{eqnarray*}
&=&e(-2b\sqrt{U}(\sqrt{n}-B_{1}))=e(-2b\sqrt{nU}+2b\sqrt{U}(\sqrt{U}+\frac{j%
}{2\sqrt{U}}+\eta +\xi )) \\
&=&e(-2b\sqrt{nU}+2b\sqrt{U}\xi )
\end{eqnarray*}%
for $\eta =k_{1}/\sqrt{U}$, we know that (7.30)-(7.34) deduce
\begin{equation}
Q_{1}(B_{1})=\sqrt{2}\sum%
\limits_{b=1}^{b_{0}}(w_{1}^{+}(b)+w_{1}^{-}(b)+w_{0}(b))+\delta _{Q_{11}},
\end{equation}%
where
\begin{equation}
w_{1}^{+}(b) =\sum\limits_{\sqrt{U}-L^{2}<\sqrt{n}\leq B_{1}+\frac{u}{2\sqrt{%
U}}}d(n)n^{-1/4}\int_{C_{A}^{+}}\frac{e^{-\frac{4\pi z^{2}}{A^{2}}}e(2(B_{1}+%
\frac{u}{2\sqrt{U}}-\sqrt{n})z)}{2\pi i(\frac{z}{\sqrt{U}}-b)}dz%
\mbox
{\LARGE \textbar }_{u=0}^{u=m}
\end{equation}
\begin{equation}
w_{1}^{-}(b) =\sum\limits_{B_{1}+\frac{u}{2\sqrt{U}}<\sqrt{n}\leq \sqrt{U}%
+L^{2}}d(n)n^{-1/4}\int_{C_{A}^{-}}\frac{e^{-\frac{4\pi z^{2}}{A^{2}}%
}e(2(B_{1}+\frac{u}{2\sqrt{U}}-\sqrt{n})z)}{2\pi i(\frac{z}{\sqrt{U}}-b)}dz%
\mbox {\LARGE \textbar }_{u=0}^{u=m}
\end{equation}
\begin{eqnarray}
w_{0}(b) &=&\sum\limits_{0<\sqrt{n}-B_{1}\leq \frac{m}{2\sqrt{U}}%
}d(n)n^{-1/4}\oint_{C_{A}^{-}-C_{A}^{+}}\frac{e^{-\frac{4\pi z^{2}}{A^{2}}%
}e(-2b\sqrt{nU}+2b\sqrt{U}\xi )dz}{2\pi i(\frac{z}{\sqrt{U}}-b)} \ \ \ \ \ \
\ \ \ \ \ \ \ \ \ \   \notag \\
&=&\sqrt{U}e^{-\frac{4\pi Ub^{2}}{A^{2}}}e(2b\sqrt{U}\xi )\sum\limits_{0<%
\sqrt{n}-B_{1}\leq \frac{m}{2\sqrt{U}}}d(n)n^{-1/4}e(-2b\sqrt{nU})
\end{eqnarray}%
Repeating the proof of (7.24),
\begin{equation}
\delta _{Q_{10}}=\sum\limits_{b=1}^{b_{0}}(w_{1}^{+}(b)+w_{1}^{-}(b))\ll U^{%
\frac{1}{4}+\varepsilon },\ \ \ \varepsilon =O(\frac{1}{\log L}),
\end{equation}%
we obtain from (7.35)-(7.39) that
\begin{equation}
Q_{1}(B_{1})=\sqrt{2U}\sum\limits_{b=1}^{b_{0}}e^{-\frac{4\pi Ub^{2}}{A^{2}}%
}e(2b\sqrt{U}\xi )\sum\limits_{0<\sqrt{n}-B_{1}\leq \frac{m}{2\sqrt{U}}%
}d(n)n^{-1/4}e(-2b\sqrt{nU})+\delta _{Q_{1}},
\end{equation}%
where $\delta _{Q_{1}}=\delta _{Q_{10}}+\delta _{Q_{11}}.\ $Therefore, using
$e((U+n)b)=1$ and
\begin{equation*}
e(-2b\sqrt{nU})=e((U+n)b-2b\sqrt{nU})=e(b(\sqrt{n}-\sqrt{U})^{2}),
\end{equation*}%
we have
\begin{equation}
Q_{1}(B_{1})=\frac{\sqrt{U}}{\sqrt{2}}\sum\limits_{0<\sqrt{n}-B_{1}\leq
\frac{m}{2\sqrt{U}}}d(n)n^{-1/4}N(n)+\delta _{Q_{11}},
\end{equation}%
where
\begin{eqnarray*}
N(n) &=&2\sum\limits_{b=1}^{b_{0}}e^{-\frac{4\pi Ub^{2}}{A^{2}}}e(b(2\sqrt{U}%
\xi +(\sqrt{n}-\sqrt{U})^{2})) \\
&=&2\sum\limits_{b=1}^{\infty }e^{-\frac{4\pi Ub^{2}}{A^{2}}}e(b(2\sqrt{U}%
\xi +(\sqrt{n}-\sqrt{U})^{2}))+O(U^{-10}),
\end{eqnarray*}%
the last equality is given by
\begin{equation*}
e^{-\frac{4\pi Ub^{2}}{A^{2}}}=e^{-\frac{4\pi b^{2}}{C_{0}^{2}}}\ll
U^{-10}e^{-\frac{b^{2}}{C_{0}^{2}}}
\end{equation*}%
for $b>b_{0}=[C_{0}^{2}L],\ C_{0}\geq 200.$ By Lemma 3.2,
\begin{eqnarray*}
\mathrm{Re}N(n) &=&-1+\sum\limits_{b=-\infty }^{\infty }e^{-\frac{4\pi Ub^{2}%
}{A^{2}}}e(b(2\sqrt{U}\xi +(\sqrt{n}-\sqrt{U})^{2}))+O(U^{-10}) \\
&=&-1+\frac{A}{2\sqrt{U}}\sum\limits_{b=-\infty }^{\infty }e^{-\frac{\pi
A^{2}}{4U}(2\sqrt{U}\xi +(\sqrt{n}-\sqrt{U})^{2}-b)^{2}}+O(U^{-10}).
\end{eqnarray*}%
By (7.41),%
\begin{equation*}
(\sqrt{n}-\sqrt{U})^{2}=O(L^{-1}).
\end{equation*}%
Thus,
\begin{equation*}
\frac{1}{9}\leq 2\sqrt{U}\xi +(\sqrt{n}-\sqrt{U})^{2}\leq \frac{3}{7}
\end{equation*}%
for
\begin{equation*}
\frac{1}{8}\leq 2\sqrt{U}\xi \leq \frac{3}{8}.
\end{equation*}%
Hence
\begin{eqnarray}
\mathrm{Re}N(n) &=&-1+O(\sqrt{L}e^{-\frac{\pi C_{0}^{2}L}{4\cdot 11^{2}}})+O(%
\sqrt{L}\sum\limits_{b=1}^{\infty }e^{-\frac{C_{0}^{2}Lb^{2}}{4}})+O(U^{-10})
\notag \\
&=&-1+O(U^{-10}).
\end{eqnarray}%
It follows from (7.41) and (7.42) that
\begin{equation}
Q_{1}(B_{1})=-\frac{\sqrt{U}}{\sqrt{2}}\sum\limits_{0<\sqrt{n}-B_{1}\leq
\frac{m}{2\sqrt{U}}}d(n)n^{-1/4}+\delta _{Q_{1}},
\end{equation}%
where $\delta _{Q_{1}}=\delta _{Q_{10}}+\delta _{Q_{11}}\ll U^{\frac{1}{4}%
+\varepsilon },\ 0<\varepsilon \ll \frac{1}{\log L}$. The proof of (7.4) is
complete.\ \ \ \ \ \ \ $\boxempty$

\ \

\textbf{Note}

For other application to be able, let us write down the accurate $\delta
_{Q_{1}}(B_{1})$, the $O$-term of (7.43):
\begin{equation}
\delta _{Q_{1}}(B_{1})=\delta _{Q_{10}}+\delta _{Q_{11}}=Q_{11}+Q_{12}+\sqrt{%
2}\sum\limits_{b=1}^{b_{0}}(w_{1}^{+}(b)+w_{1}^{-}(b))+O(U^{-\frac{1}{4}%
+\varepsilon (U)}),
\end{equation}%
where $\varepsilon (U)\ll 1/\log \log U$,
\begin{eqnarray}
Q_{1j}(B_{1}) &  \notag \\
=&\sqrt{2}\sum\limits_{-L^{2}<\sqrt{n}-\sqrt{U}\leq
L^{2}}d(n)n^{-1/4}\int_{C}e^{-\frac{4\pi z^{2}}{A^{2}}}e(2(B_{1}-\sqrt{n}%
)z)\rho _{j}(z)dz,  \notag \\
&j=1,2, \ \ \ \ \ \ \ \ \ \ \ \ \ \ \ \ \ \ \ \ \ \ \ \ \ \ \ \ \ \ \ \ \ \
\ \ \ \ \ \ \ \ \ \ \ \ \ \ \ \ \ \ \ \ \ \ \ \ \ \ \ \ \ \ \ \ \ \ \ \
\end{eqnarray}
\begin{eqnarray}
\rho _{1}(z) &=&\frac{1}{2}+\frac{1}{2}e(\frac{zm}{\sqrt{U}}),  \notag \\
\rho _{2}(z) &=&\sum_{b}\ ^{\prime }\frac{e(\frac{zu}{\sqrt{U}})}{2\pi i(%
\frac{z}{\sqrt{U}}-b)}\mbox {\LARGE \textbar }_{u=0}^{u=m},  \notag \\
w_{1}^{+}(b) &=&\sum\limits_{\sqrt{U}-L^{2}<\sqrt{n}\leq B_{1}+\frac{u}{2%
\sqrt{U}}}d(n)n^{-1/4}\int_{C_{A}^{+}}f_{nb}(z,u)\mbox {\LARGE \textbar
}_{u=0}^{u=m}dz, \\
w_{1}^{-}(b) &=&\sum\limits_{B_{1}+\frac{u}{2\sqrt{U}}<\sqrt{n}\leq \sqrt{U}%
+L^{2}}d(n)n^{-1/4}\int_{C_{A}^{-}}f_{nb}(z,u)\mbox
{\LARGE \textbar }_{u=0}^{u=m}dz,
\end{eqnarray}%
where $C$ is from $\sqrt{U}/2$ to $(b_{0}+1/2)\sqrt{U}$, $\sum^{\prime }$ is
for $-U^{10}\leq b\leq 0$ or $b_{0}+1\leq b\leq U^{10}$, $C_{A}^{\pm }$ is
Fig 7.1,
\begin{equation}
f_{nb}(z,u)=\frac{e^{-\frac{4\pi z^{2}}{A^{2}}}e(2(B_{1}+\frac{u}{2\sqrt{U}}-%
\sqrt{n})z)}{2\pi i(\frac{z}{\sqrt{U}}-b)}.
\end{equation}

\ \

\section{EVALUATION OF SUM $\sum\limits_{k=0}^{m}T(B+\protect\xi)$}

Let
\begin{eqnarray}
Q(B_{1}) &=&\sum\limits_{k=0}^{m}T(B+\xi )=\sum\limits_{k=0}^{m}(T_{0}(B_{1}+%
\frac{k}{2\sqrt{U}})+T_{1}(B_{1}+\frac{k}{2\sqrt{U}}))  \notag \\
&=&Q_{0}(B_{1})+Q_{1}(B_{1}),\ \ \ \ \ \ \ \ \mathrm{say}
\end{eqnarray}%
where $T_{0}(B_{1})$ is (5.14), $T_{1}(B_{1})$ is (5.18) and
\begin{equation}
B_{1}=\sqrt{U}+\frac{j}{2\sqrt{U}}+\eta +\xi ,\ m\ll \sqrt{U}L^{-2}.
\end{equation}%
In this section , we are going to prove that
\begin{eqnarray}
\mathrm{Re}\ Q(B_{1})&=&2\sqrt{U}\sum\limits_{\sqrt{n}\leq \frac{\sqrt{U}}{2}%
}d(n)n^{-\frac{1}{4}}e^{-\frac{4\pi n}{A^{2}}}\int_{0}^{\frac{m}{2\sqrt{U}}%
}\cos (4\pi \sqrt{n}(B_{1}+x)+\frac{\pi }{4})dx  \notag \\
&&+\mathrm{Re}\ \delta _{Q}(B_{1}),
\end{eqnarray}%
where $\delta _{Q}\ll U^{\frac{1}{4}+\varepsilon }$.

\ \

(This equality is a key of success of our paper. But if there was no sum of
the right side, we could replace $k$ and $\sum\limits_{k=0}^{m}S(B)$ by $%
k+k^{\prime }$ and $\sum\limits_{k,k^{\prime }=0}^{m}S(B)$, respectively,
and we should be succeed, although it would be more numerous.)

\ \

\textit{Proof} of (8.3):

First, we evaluate further $\mathrm{Re}\ Q_{1}(B_{1})$. By (7.23):
\begin{equation*}
\frac{1}{9}\leq \{(B_{1}+\frac{u}{2\sqrt{U}})^{2}\}\leq \frac{4}{9}
\end{equation*}%
for $u=0,m$. For $A=C_{0}\sqrt{UL}$, $C_{0}\geq 200$, $-\frac{5\sqrt{L}}{A}%
\leq \theta \leq \frac{5\sqrt{L}}{A}$ and $u=0,m$, we have
\begin{equation*}
|(B_{1}+\frac{u}{2\sqrt{U}}+\theta )^{2}-(B_{1}+\frac{u}{2\sqrt{U}})^{2}|\ \
\ \ \ \ \ \ \ \ \ \ \ \ \ \ \ \ \ \ \ \ \ \ \ \ \ \ \
\end{equation*}%
\begin{equation}
=|\theta ||2B_{1}+\frac{u}{\sqrt{U}}+\theta |\leq \frac{5}{200\sqrt{U}}(2%
\sqrt{U}+O(1))<\frac{1}{19}
\end{equation}%
Hence it does not run over any integer number from $(B_{1}+\frac{u}{2\sqrt{U}%
})^{2}$ to $(B_{1}+\frac{u}{2\sqrt{U}}+\theta )^{2}$ when $u=0,m$. So that
\begin{eqnarray*}
\int_{0}^{\frac{5\sqrt{L}}{A}}e^{-\pi A^{2}\theta ^{2}}\sum\limits_{0<\sqrt{n%
}-B_{1}-\frac{u}{2\sqrt{U}}\leq \theta } &=&0, \\
\int_{0}^{\frac{5\sqrt{L}}{A}}e^{-\pi A^{2}\theta ^{2}}\sum\limits_{-\theta
\leq \sqrt{n}-B_{1}-\frac{u}{2\sqrt{U}}\leq 0} &=&0.
\end{eqnarray*}%
\newline
Hence
\begin{equation*}
A\int_{-\frac{5\sqrt{L}}{A}}^{\frac{5\sqrt{L}}{A}}e^{-\pi A^{2}\theta
^{2}}d\theta \sum\limits_{0<\sqrt{n}-B_{1}-\theta \leq \frac{m}{2\sqrt{U}}%
}d(n)n^{-1/4}\ \ \ \ \ \ \ \ \ \ \ \ \ \ \ \ \ \ \ \ \ \ \ \ \ \ \ \
\end{equation*}%
\begin{eqnarray}
\ \ \ \ \ \ \ \ \ \ \ \ \ &=&(A\int_{-\frac{5\sqrt{L}}{A}}^{\frac{5\sqrt{L}}{%
A}}e^{-\pi A^{2}\theta ^{2}}d\theta )\sum\limits_{0<\sqrt{n}-B_{1}\leq \frac{%
m}{2\sqrt{U}}}d(n)n^{-1/4}  \notag \\
&=&(1+O(U^{-10}))\sum\limits_{0<\sqrt{n}-B_{1}\leq \frac{m}{2\sqrt{U}}%
}d(n)n^{-1/4},
\end{eqnarray}%
where the last equality is given by
\begin{equation*}
A\int_{-\frac{5\sqrt{L}}{A}}^{\frac{5\sqrt{L}}{A}}e^{-\pi A^{2}\theta
^{2}}d\theta =A\int_{-\infty }^{\infty }+O(U^{-10})=1+O(U^{-10}).
\end{equation*}%
It follows from (7.4) and (8.5) that
\begin{eqnarray}
\mathrm{Re}\ Q_{1}(B_{1}) &=&-\frac{\sqrt{U}A}{\sqrt{2}}\int_{-\frac{5\sqrt{L%
}}{A}}^{\frac{5\sqrt{L}}{A}}e^{-\pi A^{2}\theta ^{2}}\sum\limits_{0<\sqrt{n}%
-B_{1}-\theta \leq \frac{m}{2\sqrt{U}}}d(n)n^{-1/4}  \notag \\
&&+\delta _{Q_{1}}(B_{1})+O(U^{-1/4+\varepsilon (U)}),
\end{eqnarray}%
where $\delta _{Q_{1}}(B_{1})\ll U^{1/4+\varepsilon (U)}$ is (7.43). By
Lemma 2.4,
\begin{eqnarray}
\mathrm{Re}\ Q_{1}(B_{1}) &=&-\frac{\sqrt{U}A}{\sqrt{2}}\sum\limits_{n\leq
N}d(n)\int_{-\frac{5\sqrt{L}}{A}}^{\frac{5\sqrt{L}}{A}}e^{-\pi A^{2}\theta
^{2}}d\theta \int_{(B_{1}+\theta )^{2}}^{(B_{1}+\frac{m}{2\sqrt{U}}+\theta
)^{2}}x^{-1/4}\Theta (nx)dx  \notag \\
&&+\&_{l}+\delta _{Q_{1}}(B_{1})+O(U^{-1/4+\varepsilon (U)})+O(\varepsilon
_{1})  \notag \\
&=&-2\sqrt{U}\sum\limits_{n\leq N}d(n)n^{-1/4}\sum\limits_{\alpha =0}^{2}%
\frac{\gamma _{\alpha }A}{(4\pi \sqrt{n})^{\alpha }}\int_{-\frac{5\sqrt{L}}{A%
}}^{\frac{5\sqrt{L}}{A}}e^{-\pi A^{2}\theta ^{2}}j_{n\alpha }(\theta )d\theta
\notag \\
&&+\&_{l}+\delta _{Q_{1}}(B_{1})+O(U^{-1/4+\varepsilon (U)})+O(\varepsilon
_{1})
\end{eqnarray}%
holds for any $\varepsilon _{1}>0$ as $N\geq N(\varepsilon _{1})$, where
\begin{equation}
j_{n\alpha }(\theta )=\int_{B_{1}+\theta }^{B_{1}+\frac{m}{2\sqrt{U}}+\theta
}\frac{\cos (4\pi \sqrt{n}x+\frac{\pi }{4}+\frac{\alpha \pi }{2})}{x^{\alpha
}}dx,
\end{equation}%
\begin{equation}
\&_{l}=\&_{l}(\eta )=-\sqrt{2U}\int_{-\frac{5\sqrt{L}}{A}}^{\frac{5\sqrt{L}}{%
A}}e^{-\pi A^{2}\theta ^{2}}\int_{B_{0}+\theta }^{B_{0}+\frac{m}{2\sqrt{U}}%
+\theta }\delta _{x}(\eta )dx,
\end{equation}%
where
\begin{equation*}
B_{0}=B_{1}-\eta =\sqrt{U}+\frac{j}{2\sqrt{U}}+\xi ,\ \ \ \ \ \
\end{equation*}%
\begin{equation*}
\delta _{x}(\eta )=(x+\eta )^{1/2}(\log (x+\eta )^{2}+2\gamma ).
\end{equation*}%
By Lemma 3.6,
\begin{equation*}
\delta _{x}(\eta )\ll _{\eta }U^{-99}
\end{equation*}%
(the definition of  "$\ll _{\eta }$ "  see (1.28)), we know that
\begin{equation}
\&_{l}=\&_{l}(\eta )\ll _{\eta }U^{-90}.
\end{equation}%
Moreover,
\begin{equation*}
A\int_{-\frac{5\sqrt{L}}{A}}^{\frac{5\sqrt{L}}{A}}e^{-\pi A^{2}\theta
^{2}}j_{n\alpha }(\theta )d\theta \ \ \ \ \ \ \ \ \ \ \ \ \ \ \ \ \ \ \ \ \
\ \ \ \ \ \ \ \ \ \ \ \ \ \ \ \ \ \ \ \ \ \ \ \ \ \ \ \ \ \ \ \ \ \ \ \ \ \
\
\end{equation*}%
\begin{eqnarray}
&=&A\int_{-\frac{5\sqrt{L}}{A}}^{\frac{5\sqrt{L}}{A}}e^{-\pi A^{2}\theta
^{2}}d\theta \frac{\sin (4\pi \sqrt{n}(x+\theta )+\frac{\pi }{4}+\frac{%
\alpha \pi }{2})}{4\pi \sqrt{n}(x+\theta )^{\alpha }}%
\mbox {\LARGE \textbar
}_{x=B_{1}}^{x=B_{1}+\frac{m}{2\sqrt{U}}}  \notag \\
&&+A\int_{-\frac{5\sqrt{L}}{A}}^{\frac{5\sqrt{L}}{A}}e^{-\pi A^{2}\theta
^{2}}d\theta \int_{B_{1}}^{B_{1}+\frac{m}{2\sqrt{U}}}\frac{\alpha \sin (4\pi
\sqrt{n}(x+\theta )+\frac{\pi }{4}+\frac{\alpha \pi }{2})}{4\pi \sqrt{n}%
(x+\theta )^{\alpha +1}}dx  \notag \\
&\ll &\frac{A}{n(\sqrt{U})^{\alpha }},
\end{eqnarray}%
where the last inequality is given by the integration by parts for $\theta $%
. Hence, using
\begin{eqnarray}
\sum\limits_{U^{3}<n\leq N}d(n)n^{-1/4}\frac{A}{n} &\ll &U^{-1/4}L^{2}, \\
\sum\limits_{U/4<n\leq N}d(n)n^{-1/4-\alpha /2}\frac{A}{n(\sqrt{U})^{\alpha }%
} &\ll &U^{-1/4}
\end{eqnarray}%
for $\alpha \geq 1$, by (8.7), (8.10), (8.12), (8.13) and putting $%
\varepsilon _{1}\rightarrow 0_{+}$, we have
\begin{eqnarray}
\mathrm{Re}\ Q_{1}(B_{1}) &=&-2\sqrt{U}\sum\limits_{\frac{U}{4}<n\leq
U^{3}}d(n)n^{-1/4}A\int_{-\frac{5\sqrt{L}}{A}}^{\frac{5\sqrt{L}}{A}}e^{-\pi
A^{2}\theta ^{2}}j_{n0}(\theta )d\theta  \notag \\
&&+Q_{10}(B_{1})+\mathrm{Re}\ \delta _{Q_{1}}(B_{1})+O_{\eta }(U^{-1/4}),
\end{eqnarray}%
where
\begin{eqnarray}
Q_{10}(B_{1}) &=&-2\sqrt{U}\sum\limits_{\sqrt{n}\leq \frac{\sqrt{U}}{2}%
}d(n)n^{-1/4}\sum\limits_{\alpha =0}^{2}\frac{\gamma _{\alpha }A}{(4\pi
\sqrt{n})^{\alpha }}\int_{-\frac{5\sqrt{L}}{A}}^{\frac{5\sqrt{L}}{A}}e^{-\pi
A^{2}\theta ^{2}}j_{n\alpha }(\theta )d\theta  \notag \\
&=&-2\sqrt{U}\sum\limits_{\sqrt{n}\leq \frac{\sqrt{U}}{2}}d(n)n^{-1/4}\sum%
\limits_{\alpha =0}^{2}\frac{\gamma _{\alpha }A}{(4\pi \sqrt{n})^{\alpha }}%
\int_{-\frac{5\sqrt{L}}{A}}^{\frac{5\sqrt{L}}{A}}e^{-\pi A^{2}\theta
^{2}}d\theta \times  \notag \\
&&\times \int_{B_{1}+\theta }^{B_{1}+\frac{m}{2\sqrt{U}}+\theta }\frac{\cos
(4\pi \sqrt{n}x+\frac{\pi }{4}+\frac{\alpha \pi }{2})}{x^{\alpha }}dx.
\end{eqnarray}%
Furthermore,
\begin{equation*}
A\int_{-\frac{5\sqrt{L}}{A}}^{\frac{5\sqrt{L}}{A}}e^{-\pi A^{2}\theta
^{2}}j_{n0}(\theta )d\theta =\ \ \ \ \ \ \ \ \ \ \ \ \ \ \ \ \ \ \ \ \ \ \ \
\ \ \ \ \ \ \ \ \ \ \ \ \ \ \ \ \ \ \ \ \ \ \ \ \ \ \ \ \ \ \ \ \ \ \ \ \ \
\ \ \ \ \ \
\end{equation*}%
\begin{eqnarray}
&=&\int_{B_{1}}^{B_{1}+\frac{m}{2\sqrt{U}}}dx\mathrm{Re}\ A\int_{-\frac{5%
\sqrt{L}}{A}}^{\frac{5\sqrt{L}}{A}}e^{-\pi A^{2}\theta ^{2}}e(2\sqrt{n}%
(x+\theta )+1/8)d\theta  \notag \\
&=&\int_{B_{1}}^{B_{1}+\frac{m}{2\sqrt{U}}}dx\mathrm{Re}\ e(2\sqrt{n}%
x+1/8)A\int_{-\infty }^{\infty }e^{-\pi A^{2}\theta ^{2}}e(2\sqrt{n}\theta
)d\theta +O(U^{-10})  \notag \\
&=&e^{-\frac{4\pi n}{A}}\int_{B_{1}}^{B_{1}+\frac{m}{2\sqrt{U}}}\cos (4\pi
\sqrt{n}x+\pi /4)dx+O(U^{-10})  \notag \\
&=&e^{-\frac{4\pi n}{A}}\int_{0}^{\frac{m}{2\sqrt{U}}}\cos (4\pi \sqrt{n}%
(B_{1}+x)+\pi /4)dx+O(U^{-10}).
\end{eqnarray}%
Noticing $e^{-\frac{4\pi n}{A}}\ll U^{-10}$ for $\sqrt{n}>b_{0}\sqrt{U}$, it
follows from (8.14) and (8.16) that
\begin{eqnarray}
\mathrm{Re}\ Q_{1}(B_{1}) &=&-2\sqrt{U}\int_{0}^{\frac{m}{2\sqrt{U}}%
}(\sum\limits_{\frac{\sqrt{U}}{2}<\sqrt{n}\leq b_{0}\sqrt{U}}d(n)n^{-\frac{1%
}{4}}e^{-\frac{4\pi n}{A^{2}}}\cos (4\pi \sqrt{n}(B_{1}+x)+\pi /4))dx  \notag
\\
&&+Q_{10}(B_{1})+\delta _{Q_{1}}(B_{1})+O_{\eta }(U^{-1/4+\varepsilon (U)}),
\end{eqnarray}%
where $Q_{10}(B_{1})$ is (8.15), $\delta _{Q_{1}}(B_{1})\ll U^{\frac{1}{4}%
+\varepsilon }$ is (7.43).

Next we evaluate $\mathrm{Re}\ Q_{0}(B_{1})$. By (5.14) and (7.2),
\begin{eqnarray}
Q_{0}(B_{1}) &=&\sum\limits_{k=0}^{m}T_{0}(B+\xi )  \notag \\
&=&e(\frac{1}{8})\sum\limits_{\sqrt{n}\leq \frac{\sqrt{U}}{2}%
}d(n)n^{-1/4}\sum\limits_{\alpha =0}^{2}\gamma _{\alpha }(\frac{i}{4\pi
\sqrt{n}})^{\alpha }A\int_{-\frac{5\sqrt{L}}{A}}^{\frac{5\sqrt{L}}{A}%
}e^{-\pi A^{2}\theta ^{2}}  \notag \\
&&\times e(2\sqrt{n}(B_{1}+\theta ))\rho (n,\alpha ,\theta )d\theta ,
\end{eqnarray}%
where
\begin{equation}
\rho (n,\alpha ,\theta )=\sum\limits_{k=0}^{m}\frac{e(\frac{\sqrt{n}k}{\sqrt{%
U}})}{(B_{1}+\theta +\frac{k}{2\sqrt{U}})^{\alpha }}.\ \ \
\end{equation}%
By Lemma 3.3,
\begin{eqnarray}
\rho (n,\alpha ,\theta ) &=&\int_{0}^{m}\frac{e(\frac{\sqrt{n}u}{\sqrt{U}})du%
}{(B_{1}+\theta +\frac{u}{2\sqrt{U}})^{\alpha }}+\rho _{\delta }(n,\alpha
,\theta )+O(U^{-9})  \notag \\
&=&2\sqrt{U}\int_{B_{1}+\theta }^{B_{1}+\frac{m}{2\sqrt{U}}+\theta }\frac{e(2%
\sqrt{n}(x-B_{1}-\theta ))dx}{x^{\alpha }}  \notag \\
&&+\rho _{\delta }(n,\alpha ,\theta )+O(U^{-9}),
\end{eqnarray}%
where
\begin{eqnarray}
\rho _{\delta }(n,\alpha ,\theta ) &=&\sum\limits_{-U^{10}\leq b\leq
U^{10},b\neq 0}\int_{0}^{m}\frac{e((\frac{\sqrt{n}}{\sqrt{U}}-b)u)du}{%
(B_{1}+\theta +\frac{u}{2\sqrt{U}})^{\alpha }}+\frac{1}{2(B_{1}+\theta
)^{\alpha }}  \notag \\
&&+\frac{e(\frac{\sqrt{n}m}{\sqrt{U}})}{2(B_{1}+\theta +\frac{m}{2\sqrt{U}}%
)^{\alpha }}  \notag \\
&=&\sum\limits_{-U^{10}\leq b\leq U^{10},b\neq 0}\frac{e(\frac{\sqrt{n}u}{%
\sqrt{U}})}{2\pi i(\frac{\sqrt{n}}{\sqrt{U}}-b)(B_{1}+\theta +\frac{u}{2%
\sqrt{U}})^{\alpha }}\mbox
{\LARGE \textbar }_{u=0}^{u=m}  \notag \\
&&+\sum\limits_{-U^{10}\leq b\leq U^{10},b\neq 0}\int_{0}^{m}\frac{\frac{%
\alpha }{2\sqrt{U}}e((\frac{\sqrt{n}}{\sqrt{U}}-b)u)du}{2\pi i(\frac{\sqrt{n}%
}{\sqrt{U}}-b)(B_{1}+\theta +\frac{u}{2\sqrt{U}})^{\alpha +1}}  \notag \\
&&+\frac{1}{2(B_{1}+\theta )^{\alpha }}+\frac{e(\frac{\sqrt{n}m}{\sqrt{U}})}{%
2(B_{1}+\theta +\frac{m}{2\sqrt{U}})^{\alpha }}.
\end{eqnarray}%
It is easy to prove that in Lemma 3.5 ,
\begin{eqnarray}
\delta _{Q_{0}}(B_{1}) &=&\int_{-\frac{5L}{A}}^{\frac{5L}{A}}e^{-\pi \theta
^{2}}d\theta \sum\limits_{\sqrt{n}\leq \frac{\sqrt{U}}{2}}d(n)n^{-1/4}\sum%
\limits_{\alpha =0}^{2}\gamma _{\alpha }(\frac{i}{4\pi \sqrt{n}})^{\alpha
}e(2\sqrt{n}(B_{1}+\theta ))\rho _{\delta }(n,\alpha )  \notag \\
&\ll &U^{1/4+\varepsilon }.
\end{eqnarray}%
Then by (8.18), (8.20) and (8.22), we obtain that
\begin{eqnarray*}
Q_{0}(B_{1}) &=&2\sqrt{U}e(\frac{1}{8})\sum\limits_{\sqrt{n}\leq \frac{\sqrt{%
U}}{2}}d(n)n^{-1/4}\sum\limits_{\alpha =0}^{2}\gamma _{\alpha }(\frac{i}{%
4\pi \sqrt{n}})^{\alpha }A\int_{-\frac{5\sqrt{L}}{A}}^{\frac{5\sqrt{L}}{A}}
\\
&&\times e^{-\pi A^{2}\theta ^{2}}\int_{B_{1}+\theta }^{B_{1}+\frac{m}{2%
\sqrt{U}}+\theta }\frac{e(2\sqrt{n}x)}{x^{\alpha }}dx+\delta
_{Q_{0}}(B_{1})+O(U^{-1/4+\varepsilon (U)}),
\end{eqnarray*}%
where $\delta _{Q_{0}}(B_{1})\ll U^{1/4+\varepsilon (U)}$ is (8.22). Hence
\begin{equation}
\mathrm{Re}\ Q_{0}(B_{1})=-Q_{10}(B_{1})+\delta
_{Q_{0}}(B_{1})+O(U^{-1/4+\varepsilon (U)}),
\end{equation}%
where $Q_{10}(B_{1})$ is (8.15), $\delta _{Q_{0}}(B_{1})$ is (8.22). It
follows from (8.17) and (8.23) that
\begin{eqnarray}
\mathrm{Re}\ Q(B_{1}) &=&\mathrm{Re}\ (Q_{0}(B_{1})+Q_{1}(B_{1}))  \notag \\
&=&-2\sqrt{U}\int_{0}^{\frac{m}{2\sqrt{U}}}(\sum\limits_{\frac{\sqrt{U}}{2}<%
\sqrt{n}\leq b_{0}\sqrt{U}}d(n)n^{-\frac{1}{4}}e^{-\frac{4\pi n}{A^{2}}}
\notag \\
&&\times \cos (4\pi \sqrt{n}(B_{1}+x)+\pi /4))dx+\delta _{Q}(B_{1})+O_{\eta
}(U^{-1/4+\varepsilon (U)}).
\end{eqnarray}%
Taking $\nu =\frac{j}{2\sqrt{U}}+x$ in Lemma 4.3, we have $\nu =O(L^{-1})$.
By (8.24) and Lemma 4.3 we obtain
\begin{equation*}
\mathrm{Re}\ Q(B_{1})=\ \ \ \ \ \ \ \ \ \ \ \ \ \ \ \ \ \ \ \ \ \ \ \ \ \ \
\ \ \ \ \ \ \ \ \ \ \ \ \ \ \ \ \ \ \ \ \ \ \ \ \ \ \ \ \ \ \ \ \ \ \ \ \ \
\ \ \ \ \ \ \ \ \ \ \ \ \ \ \ \ \ \ \ \ \ \ \
\end{equation*}%
\begin{eqnarray}
&=&2\sqrt{U}\int_{0}^{\frac{m}{2\sqrt{U}}}\sum\limits_{\sqrt{n}\leq \frac{%
\sqrt{U}}{2}}d(n)n^{-\frac{1}{4}}e^{-\frac{4\pi n}{A}}\sum\limits_{\alpha
=0}^{2}\frac{\gamma _{\alpha }}{(4\pi )^{\alpha }}\frac{\cos (4\pi \sqrt{n}%
(B_{1}+x)+\pi /4+\alpha \pi /2)dx}{(\sqrt{n}(\sqrt{n}+B_{1}+x))^{\alpha }}
\notag \\
&&+\mathrm{Re}\ \delta _{Q}(B_{1})+O_{\eta }(U^{-1/4+\varepsilon (U)})
\notag \\
&=&2\sqrt{U}\sum\limits_{\sqrt{n}\leq \frac{\sqrt{U}}{2}}d(n)n^{-\frac{1}{4}%
}e^{-\frac{4\pi n}{A^{2}}}\int_{0}^{\frac{m}{2\sqrt{U}}}\cos (4\pi \sqrt{n}%
(B_{1}+x)+\pi /4)dx  \notag \\
&&+\mathrm{Re}\ \delta _{q}+\mathrm{Re}\ \delta _{Q}(B_{1})+O_{\eta
}(U^{-1/4+\varepsilon (U)}),
\end{eqnarray}%
where $B_{1}$ is (8.2) and

\begin{eqnarray}
\delta _{Q}(B_{1}) &=&\delta _{Q_{0}}(B_{1})+\delta _{Q_{1}}(B_{1})\ll
U^{1/4+\varepsilon },  \notag \\
\delta _{q} &=&2\sqrt{U}\sum\limits_{\sqrt{n}\leq \frac{\sqrt{U}}{2}}d(n)n^{-%
\frac{1}{4}}\sum\limits_{\alpha =1}^{2}\frac{\gamma _{\alpha }}{(4\pi \sqrt{n%
})^{\alpha }}A\int_{-\frac{5\sqrt{L}}{A}}^{\frac{5\sqrt{L}}{A}}e^{-\pi
A^{2}\theta ^{2}}j_{\alpha n}(\theta )d\theta , \\
j_{\alpha n}(\theta ) &=&\int_{\theta }^{\frac{m}{2\sqrt{U}}+\theta }\frac{%
\cos (4\pi \sqrt{n}(B_{1}+x)+\pi /4+\alpha \pi /2)dx}{(\sqrt{n}%
+B_{1}+x)^{\alpha }}  \notag \\
&\ll &\frac{1}{\sqrt{nU}},\ \text{for }\alpha \geq 1.  \notag
\end{eqnarray}

Hence
\begin{equation}
\delta _{q}\ll \sum\limits_{\sqrt{n}\leq \frac{\sqrt{U}}{2}%
}d(n)n^{-5/4}=O(1).
\end{equation}

By (8.25) and (8.26) we obtain (8.3) and the proof of (8.3) is complete. \ \
\ \ \ \ \ \ \ \ \ \ \ \ \ \ \ \ \ \ \ \ \ \ \ \ \ \ \ \ \ \ \ \ \ \ \ \ \ \
\ \ \ \ \ \ \ \ \ \ \ \ \ \ \ \ \ \ \ \ \ \ \ \ \ \ \ \ \ \ \ \ \ \ $%
\boxempty$

\ \

\textbf{Note}

Like $\S 7$, we need to write down the accurate $\delta _{Q}(B_{1})$:
\begin{equation}
\delta _{Q}(B_{1})=\delta _{Q_{1}}(B_{1})+\mathrm{Re}\delta
_{Q_{0}}(B_{1})+\delta _{q}(B_{1}),
\end{equation}%
where $\delta _{Q_{1}}(B_{1})$ is (7.44), $\delta _{q}(B_{1})$ is (8.26), $%
\delta _{Q_{0}}(B_{1})$ is (8.22), i.e.
\begin{eqnarray}
\delta _{q}(B_{1}) &=&2\sqrt{U}\sum\limits_{\sqrt{n}\leq \frac{\sqrt{U}}{2}%
}d(n)n^{-\frac{1}{4}}\sum\limits_{\alpha =1}^{2}\frac{\gamma _{\alpha }}{%
(4\pi \sqrt{n})^{\alpha }}A\int_{-\frac{5\sqrt{L}}{A}}^{\frac{5\sqrt{L}}{A}%
}e^{-\pi A^{2}\theta ^{2}}  \notag \\
&&\times \int_{\theta }^{\frac{m}{2\sqrt{U}}+\theta }\frac{\cos (4\pi \sqrt{n%
}(B_{1}+x)+\pi /4+\alpha \pi /2)dx}{(\sqrt{n}+B_{1}+x)^{\alpha }}  \notag \\
&=&\frac{\sqrt{U}}{2\pi }\sum\limits_{\sqrt{n}\leq \frac{\sqrt{U}}{2}%
}d(n)n^{-\frac{3}{4}}e^{-\frac{4\pi n}{A^{2}}}  \notag \\
&&\times \sum\limits_{\alpha =1}^{2}\frac{\gamma _{\alpha }\cos (4\pi \sqrt{n%
}(B_{1}+u)-\pi /4+\alpha \pi /2)}{(4\pi \sqrt{n}(\sqrt{n}+B_{1}+u))^{\alpha }%
}\mbox {\LARGE \textbar
}_{u=0}^{u=\frac{m}{2\sqrt{U}}}  \notag \\
&&+O(U^{-1/4+\varepsilon (U)}),
\end{eqnarray}%
\begin{eqnarray}
\delta _{Q_{0}}(B_{1}) &=&\sum\limits_{\sqrt{n}\leq \frac{\sqrt{U}}{2}%
}d(n)n^{-\frac{1}{4}}\sum\limits_{\alpha =0}^{2}\gamma _{\alpha }(\frac{i}{%
4\pi \sqrt{n}})^{\alpha }A\int_{-\frac{5\sqrt{L}}{A}}^{\frac{5\sqrt{L}}{A}%
}e^{-\pi A^{2}\theta ^{2}}d\theta  \notag \\
&&\times e(2\sqrt{n}(B_{1}+\theta )+1/8)\rho _{\delta }(n,\alpha ,\theta ),
\end{eqnarray}

moreover,
\begin{eqnarray}
\rho _{\delta }(n,\alpha ,\theta ) &=&\sum\limits_{-U^{10}\leq b\leq
U^{10},b\neq 0}\frac{e(\frac{\sqrt{n}u}{\sqrt{U}})}{2\pi i(\frac{\sqrt{n}}{%
\sqrt{U}}-b)(B_{1}+\theta +\frac{u}{2\sqrt{U}})^{\alpha }}%
\mbox {\LARGE
\textbar }_{u=0}^{u=m}  \notag \\
&&+\sum\limits_{-U^{10}\leq b\leq U^{10},b\neq 0}\int_{0}^{m}\frac{\alpha e((%
\frac{\sqrt{n}}{\sqrt{U}}-b)u)du}{4\pi i\sqrt{U}(\frac{\sqrt{n}}{\sqrt{U}}%
-b)(B_{1}+\theta +\frac{u}{2\sqrt{U}})^{\alpha +1}}  \notag \\
&&+\frac{1}{2(B_{1}+\theta )^{\alpha }}+\frac{e(\frac{\sqrt{n}m}{\sqrt{U}})}{%
2(B_{1}+\theta +\frac{m}{2\sqrt{U}})^{\alpha }}  \notag \\
&=&\sum\limits_{-U^{10}\leq b\leq U^{10},b\neq 0}\frac{e(\frac{\sqrt{n}u}{%
\sqrt{U}})}{2\pi i(\frac{\sqrt{n}}{\sqrt{U}}-b)(B_{1}+\frac{u}{2\sqrt{U}}%
)^{\alpha }}\mbox {\LARGE \textbar
}_{u=0}^{u=m}  \notag \\
&&+\frac{1}{2(B_{1})^{\alpha }}+\frac{e(\frac{\sqrt{n}m}{\sqrt{U}})}{2(B_{1}+%
\frac{m}{2\sqrt{U}})^{\alpha }}+\rho _{\delta }^{\ast }(n,\alpha ,\theta ),
\end{eqnarray}%
and
\begin{eqnarray*}
&&\rho _{\delta }^{\ast }(n,\alpha ,\theta )= \\
&=&\sum\limits_{-U^{10}\leq b\leq U^{10},b\neq 0}\frac{e(\frac{\sqrt{n}u}{%
\sqrt{U}})}{2\pi i(\frac{\sqrt{n}}{\sqrt{U}}-b)}(\frac{1}{(B_{1}+\theta +%
\frac{u}{2\sqrt{U}})^{\alpha }}-\frac{1}{(B_{1}+\frac{u}{2\sqrt{U}})^{\alpha
}})\mbox {\LARGE
\textbar }_{u=0}^{u=m} \\
&&+\frac{1}{2}(\frac{1}{(B_{1}+\theta )^{\alpha }}-\frac{1}{B_{1}^{\alpha }}%
)+\frac{e(\frac{\sqrt{n}u}{\sqrt{U}})}{2}(\frac{1}{(B_{1}+\theta +\frac{u}{2%
\sqrt{U}})^{\alpha }}-\frac{1}{(B_{1}+\frac{u}{2\sqrt{U}})^{\alpha }}) \\
&&+\sum\limits_{-U^{10}\leq b\leq U^{10},b\neq 0}\int_{0}^{m}\frac{\alpha e((%
\frac{\sqrt{n}}{\sqrt{U}}-b)u)du}{4\pi i\sqrt{U}(\frac{\sqrt{n}}{\sqrt{U}}%
-b)(B_{1}+\theta +\frac{u}{2\sqrt{U}})^{\alpha +1}}.
\end{eqnarray*}%
By Lemma 3.5,
\begin{equation}
\sum\limits_{\sqrt{n}\leq \frac{\sqrt{U}}{2}}d(n)n^{-\frac{1}{4}%
}\sum\limits_{\alpha =0}^{2}\gamma _{\alpha }(\frac{i}{4\pi \sqrt{n}}%
)^{\alpha }e(2\sqrt{n}(B_{1}+\theta )+1/8)\rho _{\delta }^{\ast }(n,\alpha
,\theta )\ll U^{-1/4+\varepsilon (U)}.
\end{equation}%
Clearly, as $\alpha =1,2$, the sum (8.30) is $O(U^{-\frac{1}{4}+\varepsilon
(U)})$. Using
\begin{equation*}
A\int_{-\frac{5L}{A}}^{\frac{5L}{A}}e^{-\pi A^{2}\theta ^{2}}e(2\sqrt{n}%
\theta )d\theta =e^{-\frac{4\pi n}{A^{2}}}+O(U^{-10},
\end{equation*}%
it follows from (8.30), (8.31) and (8.32) that
\begin{eqnarray*}
&&\mathrm{Re}\ \delta _{Q_{0}}(B_{1})= \\
&=&\sum\limits_{\sqrt{n}\leq \frac{\sqrt{U}}{2}}d(n)n^{-\frac{1}{4}}e^{-%
\frac{4\pi n}{A^{2}}}(\sum\limits_{-U^{10}\leq b\leq U^{10},b\neq 0}\frac{%
\cos (4\pi \sqrt{n}(B_{1}+\frac{u}{2\sqrt{U}})-\frac{\pi }{4})}{2\pi (\frac{%
\sqrt{n}}{\sqrt{U}}-b)}\mbox {\LARGE
\textbar }_{u=0}^{u=m} \\
&&+\frac{\cos (4\pi \sqrt{n}B_{1}+\frac{\pi }{4})}{2}+\frac{\cos (4\pi \sqrt{%
n}(B_{1}+\frac{m}{2\sqrt{U}})+\frac{\pi }{4})}{2})+O(U^{-\frac{1}{4}%
+\varepsilon (U)}) \\
&=&\sum\limits_{\sqrt{n}\leq \frac{\sqrt{U}}{2}}d(n)n^{-\frac{1}{4}}e^{-%
\frac{4\pi n}{A^{2}}}((\frac{\cos \pi \frac{\sqrt{n}}{\sqrt{U}}}{2\sin \pi
\frac{\sqrt{n}}{\sqrt{U}}}-\frac{\sqrt{U}}{2\pi \sqrt{n}})\cos (4\pi \sqrt{n}%
(B_{1}+\frac{u}{2\sqrt{U}})-\frac{\pi }{4})\mbox
{\LARGE \textbar }_{u=0}^{u=m}
\end{eqnarray*}%
\begin{equation}
+\frac{\cos (4\pi \sqrt{n}B_{1}+\frac{\pi }{4})}{2}+\frac{\cos (4\pi \sqrt{n}%
(B_{1}+\frac{m}{2\sqrt{U}})+\frac{\pi }{4})}{2})+O(U^{-\frac{1}{4}%
+\varepsilon (U)}),
\end{equation}%
where the last equality is given by the following (9.25).

\ \

\section{EVALUATION OF $\Omega$}

By (5.13), (8.1) and (8.3),
\begin{eqnarray}
\sum\limits_{k=0}^{m}S(B) &=&2\sqrt{2}\mathrm{Re}\int_{\xi _{1}}^{\xi
_{2}}\sum\limits_{k=0}^{m}T(B+\xi )d\xi +O(U^{-1})  \notag \\
&=&2\sqrt{2}\int_{\xi _{1}}^{\xi _{2}}\mathrm{Re}\ Q(B_{1})d\xi +O(U^{-1})
\notag \\
&=&4\sqrt{2U}\int_{\xi _{1}}^{\xi _{2}}d\xi \sum\limits_{\sqrt{n}\leq \frac{%
\sqrt{U}}{2}}d(n)n^{-1/4}e^{-\frac{4\pi n}{A^{2}}}\int_{0}^{\frac{m}{2\sqrt{U%
}}}\cos (4\pi \sqrt{n}(B_{1}+x)+\pi /4)dx  \notag \\
&&+\mathrm{Re}\int_{\xi _{1}}^{\xi _{2}}\delta _{Q}(B_{1})d\xi +O_{\eta
}(U^{-3/4+\varepsilon (U)}),
\end{eqnarray}%
where $0<\varepsilon (U)\ll 1/\log \log U$, $\delta _{Q}(B_{1})\ll
U^{1/4+\varepsilon (U)}$, $\xi _{1}=1/16\sqrt{U}$, $\xi
_{2}=3/16\sqrt{U}$ and
\begin{equation*}
B_{1}=B+\xi -\frac{k}{2\sqrt{U}}=\sqrt{U}+\frac{j}{2\sqrt{U}}+\eta +\xi .
\end{equation*}%
Let
\begin{equation}
R(\eta ,m)=\frac{1}{\sqrt{V}}\sum\limits_{-\sqrt{V}L\leq j\leq \sqrt{V}L}e^{-%
\frac{\pi j^{2}}{V}}\sum\limits_{k=0}^{m}S(B),
\end{equation}%
then by (9.1),
\begin{eqnarray}
R(\eta ,m) &=&4\sqrt{2U}\sum\limits_{\sqrt{n}\leq \frac{\sqrt{U}}{2}%
}d(n)n^{-1/4}e^{-\frac{4\pi n}{A^{2}}}\int_{\xi _{1}}^{\xi _{2}}d\xi
\int_{0}^{\frac{m}{2\sqrt{U}}}v(\eta ,\xi ,x)dx  \notag \\
&&+\mathrm{Re}\ \delta _{R}(\eta ,m)+O_{\eta }(U^{-3/4+\varepsilon (U)}),
\end{eqnarray}%
where
\begin{equation}
\delta _{R}(\eta ,m)\ll U^{-1/4+\varepsilon }
\end{equation}%
or accurately,
\begin{equation}
\delta _{R}(\eta ,m)=\frac{1}{\sqrt{V}}\sum\limits_{-\sqrt{V}L\leq j\leq
\sqrt{V}L}e^{-\frac{\pi j^{-2}}{V}}\int_{\xi _{1}}^{\xi _{2}}\delta
_{Q}(B_{1})d\xi
\end{equation}%
((8.2) and (9.5) deduce (9.4)), moreover,
\begin{equation*}
v(\eta ,\xi ,x)=\frac{1}{\sqrt{V}}\sum\limits_{-\sqrt{V}L\leq j\leq \sqrt{V}%
L}e^{-\frac{\pi j^{-2}}{V}}\cos (4\pi n(\sqrt{U}+\frac{j}{2\sqrt{U}}+\xi
+\eta +x)+\pi /4)
\end{equation*}%
\begin{equation*}
=\mathrm{Re}\ e(2\sqrt{n}(\sqrt{U}+\xi +\eta +x)+1/8)\frac{1}{\sqrt{V}}%
\sum\limits_{j=-\infty }^{\infty }e^{-\frac{\pi j^{-2}}{V}}e(\frac{\sqrt{n}j%
}{\sqrt{U}})+0(U^{-10}).
\end{equation*}%
By Lemma 3.2,
\begin{equation*}
v(\eta ,\xi ,x)=\mathrm{Re}\ e(2\sqrt{n}(\sqrt{U}+\xi +\eta
+x)+1/8)\sum\limits_{j=-\infty }^{\infty }e^{-\pi V(\frac{\sqrt{n}}{\sqrt{U}}%
-j)^{2}}+0(U^{-10})
\end{equation*}%
\begin{eqnarray}
&=&e^{-\frac{\pi Vn}{U}}\cos (4\pi \sqrt{n}(\sqrt{U}+\xi +\eta +x)+\pi
/4)+O(\sum\limits_{j=-\infty ,j\neq 0}^{\infty }e^{-\pi v(\frac{\sqrt{n}}{%
\sqrt{U}}-j)^{2}})+0(U^{-10})  \notag \\
&=&e^{-\frac{\pi Vn}{U}}\cos (4\pi \sqrt{n}(\sqrt{U}+\xi +\eta +x)+\pi
/4)+0(U^{-10})
\end{eqnarray}%
for $\frac{\sqrt{n}}{\sqrt{U}}\leq 1/2$. Since
\begin{equation*}
e^{-\frac{\pi Vn}{U}}\ll e^{-L^{2}},n>\frac{UL^{2}}{V},
\end{equation*}%
then it follows from (9.3) and (9.6) that
\begin{eqnarray}
R(\eta ,m) &=&4\sqrt{2U}\sum\limits_{n\leq \frac{UL^{2}}{V}%
}d(n)n^{-1/4}e^{-\pi n(\frac{V}{U}+\frac{4}{A^{2}})}  \notag \\
&&\times \int_{\xi _{1}}^{\xi _{2}}d\xi \int_{0}^{\frac{m}{2\sqrt{U}}}\cos
(4\pi \sqrt{n}(\sqrt{U}+\xi +\eta +x)+\pi /4)dx  \notag \\
&&+\delta _{R}(\eta ,m)+O(U^{-3/4+\varepsilon })  \notag \\
&=&\frac{\sqrt{2U}}{\pi }\sum\limits_{n\leq \frac{UL^{2}}{V}%
}d(n)n^{-3/4}e^{-\pi n(\frac{V}{U}+\frac{4}{A^{2}})}  \notag \\
&&\times \int_{\xi _{1}}^{\xi _{2}}\cos (4\pi \sqrt{n}(\sqrt{U}+\xi +\eta
+x)-\pi /4)dx\mbox {\LARGE
\textbar }_{x=0}^{x=\frac{m}{2\sqrt{U}}}  \notag \\
&&+\delta _{R}(\eta ,m)+O_{\eta }(U^{-3/4+\varepsilon (U)}).
\end{eqnarray}%
It is obvious that
\begin{eqnarray}
f(\eta )\mbox {\LARGE \textbar
}_{\eta _{K}} &=&f(\eta _{1}+\cdot +\eta _{K})\mbox {\LARGE \textbar
}_{\eta _{1}=0}^{\eta _{1}=\frac{k_{1}}{\sqrt{U}}}\cdots
\mbox {\LARGE
\textbar }_{\eta _{K}=0}^{\eta _{K}=\frac{k_{1}}{\sqrt{U}}}  \notag \\
&=&\sum\limits_{a=0}^{K}(-1)^{K-a}\left( \frac{K}{a}\right) f(\frac{ak}{%
\sqrt{U}})  \notag \\
&=&(-1)^{K}f(0)+\sum\limits_{a=1}^{K}(-1)^{K-a}\left( \frac{K}{a}\right) f(%
\frac{ak_{1}}{\sqrt{U}}).
\end{eqnarray}%
By (9.7), (9.8) and definition (1.28) of $O_{\eta }$ we have
\begin{equation}
R(\eta ,m)\mbox {\LARGE \textbar
}_{\eta _{K}}=R_{0}+R_{m}+R_{mk}+\delta _{R}(\eta ,m)%
\mbox {\LARGE
\textbar }_{\eta _{K}}+O(U^{-3/4+\varepsilon (U)}),
\end{equation}%
where $0<\varepsilon (U)\ll 1/\log \log U,$
\begin{eqnarray}
R_{0} &=&(-1)^{K+1}\frac{\sqrt{2U}}{\pi }\sum\limits_{n\leq \frac{UL^{2}}{V}%
}d(n)n^{-3/4}e^{-\pi n(\frac{V}{U}+\frac{4}{A^{2}})}  \notag \\
&&\times \int_{\xi _{1}}^{\xi _{2}}\cos (4\pi \sqrt{n}(\sqrt{U}+\xi )-\pi
/4)d\xi , \\
R_{m} &=&\frac{(-1)^{K}\sqrt{2U}}{\pi }\sum\limits_{n\leq \frac{UL^{2}}{V}%
}d(n)n^{-3/4}e^{-\pi n(\frac{V}{U}+\frac{4}{A^{2}})}  \notag \\
&&\times \int_{\xi _{1}}^{\xi _{2}}\cos (4\pi \sqrt{n}(\sqrt{U}+\xi +\frac{m%
}{2\sqrt{U}})-\pi /4)d\xi , \\
R_{mk} &=&\frac{\sqrt{2U}}{\pi }\sum\limits_{n\leq \frac{UL^{2}}{V}%
}d(n)n^{-3/4}e^{-\pi n(\frac{V}{U}+\frac{4}{A^{2}})}\int_{\xi _{1}}^{\xi
_{2}}d\xi \sum\limits_{a=1}^{K}(-1)^{K-a}\left( \frac{K}{a}\right)  \notag \\
&&\times \cos (4\pi \sqrt{n}(\sqrt{U}+\xi +x+\frac{ak}{\sqrt{U}})-\pi /4)%
\mbox {\LARGE \textbar }_{x=0}^{x=\frac{m}{2\sqrt{U}}}.
\end{eqnarray}%
Therefore,
\begin{eqnarray}
\Omega &=&\frac{1}{\sqrt{V}}\sum\limits_{-\sqrt{V}L\leq j\leq \sqrt{V}L}e^{-%
\frac{\pi j^{2}}{V}}\sum\limits_{k,m=m_{0}}^{2m_{0}}S(B)%
\mbox {\LARGE \textbar
}_{\eta _{K}}=\sum\limits_{k,m=m_{0}}^{2m_{0}}R(\eta ,m)%
\mbox {\LARGE
\textbar }_{\eta _{K}}  \notag \\
&=&\Omega _{0}+\Omega _{1}+\Omega _{2}+\Omega _{\delta
}+O(U^{1/4+\varepsilon (U)}),
\end{eqnarray}%
\noindent where
\begin{eqnarray}
\Omega _{0} &=&\sum\limits_{k,m=m_{0}}^{2m_{0}}R_{0}=(m_{0}+1)^{2}R_{0}, \\
\Omega _{1}
&=&\sum\limits_{k,m=m_{0}}^{2m_{0}}R_{m}=(m_{0}+1)\sum%
\limits_{m=m_{0}}^{2m_{0}}R_{m}, \\
\Omega _{2} &=&\sum\limits_{k,m=m_{0}}^{2m_{0}}R_{mk}, \\
\Omega _{\delta } &=&\sum\limits_{k,m=m_{0}}^{2m_{0}}\delta _{R}(\eta ,m)%
\mbox
{\LARGE \textbar }_{\eta _{K}}
\end{eqnarray}%
(we have replaced $k_{1}\ by\ k$), $R_{0},R_{m},R_{mk}$ and $\delta
_{R}(\eta ,m)$ are (9.10), (9.11), (9.12) and (9.5), respectively. By (9.4),
\begin{equation}
\Omega _{\delta }\ll U^{3/4+\varepsilon (U)}.
\end{equation}

We first evaluate $\Omega _{1}$. By (9.11) and (9.15),
\begin{equation}
\Omega _{1}=C_{1}\sqrt{U}(m_{0}+1)\sum\limits_{\sqrt{n}\leq \frac{UL^{2}}{V}%
}d(n)n^{-3/4}e^{-\pi n(\frac{V}{U}+\frac{4}{A^{2}})}\int_{\xi _{1}}^{\xi
_{2}}w_{1n}(\xi )d\xi ,
\end{equation}%
where $C_{1}=O(1)$ is real,
\begin{equation}
w_{1n}(\xi )=\sum\limits_{m=m_{0}}^{2m_{0}}\cos (4\pi \sqrt{n}(\sqrt{U}+\xi +%
\frac{m}{2\sqrt{U}})-\pi /4).
\end{equation}%
By Lemma 3.3,
\begin{eqnarray}
w_{1n}(\xi ) &=&\int_{m_{0}}^{2m_{0}}\cos (4\pi \sqrt{n}(\sqrt{U}+\xi +\frac{%
u}{2\sqrt{U}})-\pi /4)du+w_{\delta }^{\prime }+w_{\delta }^{\prime \prime
}+O(U^{-10})  \notag \\
&=&\frac{\sqrt{U}}{2\pi \sqrt{n}}\cos (4\pi \sqrt{n}(\sqrt{U}+\xi +\frac{%
um_{0}}{2\sqrt{U}})-3\pi /4)\mbox
{\LARGE \textbar }_{u=1}^{u=2}  \notag \\
&&+w_{\delta }^{\prime }+w_{\delta }^{\prime \prime }+O(U^{-10})
\end{eqnarray}%
where
\begin{eqnarray}
w_{\delta }^{\prime } &=&\frac{1}{2}\sum\limits_{b=1}^{2}\cos (4\pi \sqrt{n}(%
\sqrt{U}+\xi +\frac{bm_{0}}{2\sqrt{U}})-\pi /4), \\
w_{\delta }^{\prime \prime } &=&\mathrm{Re}\sum\limits_{-U^{10}\leq b\leq
U^{10},b\neq 0}\int_{m_{0}}^{2m_{0}}e(2\sqrt{n}(\sqrt{U}+\xi +\frac{u}{2%
\sqrt{U}})-bu-1/8)du,  \notag \\
&=&\mathrm{Re}\frac{1}{2\pi i}\sum\limits_{-U^{10}\leq b\leq U^{10},b\neq 0}%
\frac{e(2\sqrt{n}(\sqrt{U}+\xi +\frac{u}{2\sqrt{U}})-1/8)}{(\frac{\sqrt{n}}{%
\sqrt{U}}-b)}\mbox {\LARGE \textbar }_{u=m_{0}}^{u=2m_{0}}
\end{eqnarray}%
for $e(-bu)=1,u=m_{0},2m_{0}$. It is easy to test Fourier's formula:
\begin{equation}
\cos (\alpha x)=\frac{1}{\pi }\sin (\alpha \pi )(\frac{1}{\alpha }%
+\sum\limits_{b=1}^{\infty }\frac{2\alpha (-1)^{b}\cos (bx)}{\alpha
^{2}-b^{2}}),
\end{equation}%
\begin{equation*}
0<\alpha <1,\ -\pi \leq x\leq \pi .
\end{equation*}%
Taking $x=\pi ,\alpha =\sqrt{n}/\sqrt{U}$, we have
\begin{eqnarray}
\sum\limits_{-U^{10}\leq b\leq U^{10},b\neq 0}\frac{1}{\frac{\sqrt{n}}{\sqrt{%
U}}-b} &=&\sum\limits_{b=1}^{\infty }\frac{2\frac{\sqrt{n}}{\sqrt{U}}}{(%
\frac{\sqrt{n}}{\sqrt{U}})^{2}-b^{2}}+O(U^{-10})  \notag \\
&=&\frac{\pi \cos (\pi \frac{\sqrt{n}}{\sqrt{U}})}{\sin (\pi \frac{\sqrt{n}}{%
\sqrt{U}})}-\frac{\sqrt{U}}{\sqrt{n}}  \notag \\
&=&-\frac{\pi ^{2}\sqrt{n}}{3\sqrt{U}}(1+\varphi (\frac{\sqrt{n}}{\sqrt{U}}%
))+O(U^{-10}),
\end{eqnarray}%
where $\varphi (\frac{\sqrt{n}}{\sqrt{U}})=C_{1}\frac{n}{U}+C_{2}(\frac{n}{U}%
)^{2}+\cdots $ for $\sqrt{n}\leq \frac{UL^{2}}{V}$. By Lemma 3.5,
\begin{equation*}
\sqrt{U}(m_{0}+1)\int_{\xi _{1}}^{\xi _{2}}d\xi \sum\limits_{\sqrt{n}\leq
\frac{UL^{2}}{V}}d(n)n^{-3/4}e^{-\pi n(\frac{V}{U}+\frac{4}{A^{2}}%
)}w_{\delta }^{\prime \prime }\ \ \ \ \ \ \ \ \ \ \ \ \ \ \ \ \ \ \ \ \ \ \
\ \ \ \ \ \ \
\end{equation*}%
\begin{equation}
\ll \sqrt{U}(m_{0}+1)\int_{\xi _{1}}^{\xi _{2}}U^{-1/2+1/4+\varepsilon
(U)}d\xi \ll U^{1/4+\varepsilon (U)}.
\end{equation}%
It follows from (9.19), (9.21), (9.22) and (9.26) that
\begin{eqnarray}
\Omega _{1} &=&C_{1}U(m_{0}+1)\int_{\xi _{1}}^{\xi _{2}}d\xi
\sum\limits_{n\leq \frac{UL^{2}}{V}}d(n)n^{-5/4}e^{-\pi n(\frac{V}{U}+\frac{4%
}{A^{2}})}  \notag \\
&&\times \cos (4\pi \sqrt{n}(\sqrt{U}+\xi +\frac{um_{0}}{2\sqrt{U}})-3\pi /4)%
\mbox
{\LARGE \textbar }_{u=1}^{u=2}  \notag \\
&&+\Omega _{1}\delta +O(X^{1/4+\varepsilon (X)}),
\end{eqnarray}%
where $U=[X]$,
\begin{eqnarray}
\Omega _{1\delta } &=&C_{2}\sqrt{U}(m_{0}+1)\sum\limits_{b=1}^{2}\int_{\xi
_{1}}^{\xi _{2}}\sum\limits_{n\leq \frac{UL^{2}}{V}}d(n)n^{-3/4}e^{-\pi n(%
\frac{V}{U}+\frac{4}{A^{2}})}  \notag \\
&&\times \cos (4\pi \sqrt{n}(\sqrt{U}+\xi +\frac{bm_{0}}{2\sqrt{U}})-\pi
/4)d\xi .
\end{eqnarray}%
Furthermore,
\begin{equation}
e^{-\pi n(\frac{V}{U}+\frac{4}{A^{2}})}-1=-\pi n(\frac{V}{U}+\frac{4}{A^{2}}%
)\int_{0}^{1}e^{-\theta \pi n(\frac{V}{U}+\frac{4}{A^{2}})}d\theta ,
\end{equation}%
and
\begin{eqnarray}
&&\cos (4\pi \sqrt{n}(\sqrt{U}+\xi +\frac{um_{0}}{2\sqrt{U}})-3\pi /4)
\notag \\
&=&\mathrm{Re}\ e(2\sqrt{n}(\sqrt{X}+\frac{um_{0}}{2\sqrt{X}})-3/8)e(2\sqrt{n%
}(\xi +\sqrt{U}-\sqrt{X}+\frac{um_{0}}{2}(\frac{1}{\sqrt{U}}-\frac{1}{\sqrt{X%
}})))  \notag \\
&=&\cos (4\pi \sqrt{n}(\sqrt{X}+\frac{um_{0}}{2\sqrt{X}})-3\pi /4)  \notag \\
&&-4\pi \sqrt{n}(\xi +\sqrt{U}-\sqrt{X}+\frac{um_{0}}{2}(\frac{1}{\sqrt{U}}-%
\frac{1}{\sqrt{X}}))\cos (4\pi \sqrt{n}(\sqrt{X}+\frac{um_{0}}{2\sqrt{U}}%
)-\pi /4)  \notag \\
&&+\mathrm{Re}(e(2\sqrt{n}(\sqrt{X}+\frac{um_{0}}{2\sqrt{X}})-3/8)\delta
_{nu}(\xi )),
\end{eqnarray}%
where
\begin{eqnarray}
\delta _{nu}(\xi ) =n(4\pi i(\xi +\sqrt{U}-\sqrt{X}+\frac{um_{0}}{2}(\frac{1%
}{\sqrt{U}}-\frac{1}{\sqrt{X}})))^{2} \ \ \ \ \ \ \ \ \ \ \ \ \ \ \ \ \ \ \
\   \notag \\
\times \int_{0}^{1}\int_{0}^{1}\theta e(2\theta \theta _{1}\sqrt{n}(\xi +%
\sqrt{U}-\sqrt{X}+\frac{um_{0}}{2}(\frac{1}{\sqrt{U}}-\frac{1}{\sqrt{X}}%
)))d\theta d\theta _{1}.
\end{eqnarray}%
Using (9.29) and Lemma 3.5,
\begin{equation*}
U(m_{0}+1)\sum\limits_{n\leq \frac{UL^{2}}{V}}d(n)n^{-5/4}(e^{-\pi n(\frac{V%
}{U}+\frac{4}{A^{2}})}-1)\int_{\xi _{1}}^{\xi _{2}}\cos (4\pi \sqrt{n}(\sqrt{%
U}+\xi +\frac{um_{0}}{2\sqrt{U}})-3\pi /4)d\xi
\end{equation*}%
\begin{equation}
\ll U^{1/4+\varepsilon (U)}.
\end{equation}%
Since $U=[X],\ \xi =O(U^{-\frac{1}{2}})$, then
\begin{equation}
\xi +\sqrt{U}-\sqrt{X}=\xi +\frac{[X]-X}{\sqrt{[X]}+\sqrt{X}}=\xi -\frac{%
\{X\}}{2\sqrt{X}}+O(\frac{1}{X}),
\end{equation}%
\begin{equation}
\frac{um_{0}}{2}(\frac{1}{\sqrt{U}}-\frac{1}{\sqrt{X}}))=O(\frac{1}{X}),
\end{equation}%
for $m_{0}\asymp \sqrt{U}L^{-2}$. It follows from (9.31), (9.33), (9.34) and
Lemma 3.5 that
\begin{equation}
U(m_{0}+1)\sum\limits_{n\leq \frac{UL^{2}}{V}}d(n)n^{-5/4}\int_{\xi
_{1}}^{\xi _{2}}e(2\sqrt{n}(\sqrt{X}+\frac{um_{0}}{2\sqrt{X}})-3/8)\delta
_{nu}(\xi )d\xi \ll X^{1/4+\varepsilon (X)}.
\end{equation}%
Therefore, by (9.27), (9.32), (9.33), (9.34) and (9.35) we obtain that
\begin{eqnarray}
\Omega _{1} &=&C_{1}U(m_{0}+1)\sum\limits_{n\leq \frac{UL^{2}}{V}%
}d(n)n^{-5/4}\int_{\xi _{1}}^{\xi _{2}}(\cos 4\pi \sqrt{n}(\sqrt{X}+\frac{%
um_{0}}{2\sqrt{X}})-3\pi /4)\ \ \ \ \ \ \ \ \ \ \ \   \notag \\
&&+C_{2}\sqrt{n}(\xi -\frac{\{X\}}{2\sqrt{X}}+O(\frac{1}{X}))\cos (4\pi
\sqrt{n}(\sqrt{X}+\frac{um_{0}}{2\sqrt{X}})-\pi /4)d\xi  \notag \\
&&+\Omega _{1\delta }+O(X^{1/4+\varepsilon (U)})  \notag \\
&=&C_{0}\sqrt{U}(m_{0}+1)\sum\limits_{n\leq \frac{UL^{2}}{V}%
}d(n)n^{-5/4}\cos (4\pi \sqrt{n}(\sqrt{X}+\frac{um_{0}}{2\sqrt{X}})-3\pi /4)%
\mbox {\LARGE \textbar
}_{u=1}^{u=2}  \notag \\
&&+(m_{0}+1)(C_{1}+C_{2}\{X\})\sum\limits_{\sqrt{n}\leq \frac{UL^{2}}{V}%
}d(n)n^{-3/4}\cos (4\pi \sqrt{n}(\sqrt{X}+\frac{um_{0}}{2\sqrt{X}})-\pi /4)%
\mbox {\LARGE \textbar
}_{u=1}^{u=2}  \notag \\
&&+\Omega _{1\delta }+O(X^{1/4+\varepsilon (X)})\
\end{eqnarray}%
for $\xi _{1}=1/16\sqrt{U}$, $\xi _{2}=3/16\sqrt{U}$. In the same way, by
(9.28),%
\begin{equation}
\Omega _{1\delta }=(m_{0}+1)\sum\limits_{a=1}^{2}C_{a}\sum\limits_{\sqrt{n}%
\leq \frac{UL^{2}}{V}}d(n)n^{-3/4}\cos (4\pi \sqrt{n}(\sqrt{X}+\frac{am_{0}}{%
2\sqrt{X}})-\pi /4)+O(X^{1/4+\varepsilon (X)}).
\end{equation}%
By Lemma 3.4,
\begin{equation}
\sum\limits_{\frac{UL^{2}}{V}<n\leq \frac{X_{2}L^{2}}{V}}d(n)n^{-5/4}\cos
(4\pi \sqrt{n}(\sqrt{X}+\frac{um_{0}}{2X})-3\pi /4)\ll X^{-3/4+\varepsilon }
\end{equation}%
and
\begin{equation}
\sum\limits_{\frac{UL^{2}}{V}<n\leq \frac{X_{2}L^{2}}{V}}d(n)n^{-3/4}\cos
(4\pi \sqrt{n}(\sqrt{X}+\frac{am_{0}}{2X})-\pi /4)\ll X^{-1/4+\varepsilon }.
\end{equation}%
Therefore, by (9.36), (9.37), (9.38) and (9.39), we may rewrite $\Omega _{1}$
in the form
\begin{eqnarray*}
\Omega _{1} &=&(m_{0}+1)\sqrt{X}\sum\limits_{a=1}^{2}C_{a}\sum\limits_{n\leq
\frac{X_{2}L^{2}}{V}}d(n)n^{-5/4}\cos (4\pi \sqrt{n}(\sqrt{X}+\frac{am_{0}}{2%
\sqrt{X}})-3\pi /4) \\
&&+(m_{0}+1)\sum\limits_{a=1}^{2}(C_{2}(a)+C_{3}(a)\{X\})
\end{eqnarray*}%
\begin{equation}
\ \ \ \ \ \ \ \ \ \ \ \times \sum\limits_{n\leq \frac{X_{2}L^{2}}{V}%
}d(n)n^{-3/4}\cos (4\pi \sqrt{n}(\sqrt{X}+\frac{am_{0}}{2\sqrt{X}})-\pi /4)%
\mbox {\LARGE \textbar }_{u=1}^{u=2}+O(X^{1/4+\varepsilon (U)})
\end{equation}%
for $X_{1}\leq X\leq X_{2}$, $U=[X]$, where $C_{j}(a)=O(1)$ is real and
independent of $X$.

Next we evaluate $\Omega _{2}$ in (9.16). By (9.16) and (9.12),
\begin{eqnarray}
\Omega _{2} &=&\frac{\sqrt{2U}}{\pi }\sum\limits_{n\leq \frac{UL^{2}}{V}%
}d(n)n^{-3/4}e^{-\pi n(\frac{V}{U}+\frac{4}{A^{2}})}  \notag \\
&&\times \sum\limits_{a=1}^{K}(-1)^{K}\left( \frac{K}{a}\right) \int_{\xi
_{1}}^{\xi _{2}}(w_{na}(\xi ,\frac{m}{2\sqrt{U}})-w_{na}(\xi ,0))d\xi  \notag
\\
&=&\Omega _{2}^{\prime }+\Omega _{20},\ \ \ \ \mathrm{say},\
\end{eqnarray}%
where
\begin{eqnarray}
w_{na}(\xi ,\frac{m}{2\sqrt{U}}) &=&\sum\limits_{k,m=m_{0}}^{2m_{0}}\cos
(4\pi \sqrt{n}(\sqrt{U}+\xi +\frac{ak}{\sqrt{U}}+\frac{m}{2\sqrt{U}})-\pi /4)
\\
w_{na}(\xi ,0) &=&\sum\limits_{k,m=m_{0}}^{2m_{0}}\cos (4\pi \sqrt{n}(\sqrt{U%
}+\xi +\frac{ak}{\sqrt{U}})-\pi /4)  \notag \\
&=&(m_{0}+1)\sum\limits_{k=m_{0}}^{2m_{0}}\cos (4\pi \sqrt{n}(\sqrt{U}+\xi +%
\frac{ak}{\sqrt{U}})-\pi /4).
\end{eqnarray}%
Repeating the evaluation of (9.40) we have
\begin{eqnarray}
\Omega _{20} &=&-\frac{\sqrt{2U}}{\pi }(m_{0}+1)\sum\limits_{n\leq \frac{%
UL^{2}}{V}}d(n)n^{-3/4}e^{-\pi n(\frac{V}{U}+\frac{4}{A^{2}})}  \notag \\
&&\times \sum\limits_{a=1}^{K}(-1)^{K}\left( \frac{K}{a}\right) \int_{\xi
_{1}}^{\xi _{2}}\sum\limits_{k=m_{0}}^{2m_{0}}\cos (4\pi \sqrt{n}(\sqrt{U}%
+\xi +\frac{ak}{\sqrt{U}})-\pi /4)d\xi  \notag \\
&=&(m_{0}+1)\sqrt{X}\sum\limits_{a=1}^{4K}C_{1}(a)\sum\limits_{n\leq \frac{%
X_{2}L^{2}}{V}}d(n)n^{-5/4}\cos (4\pi \sqrt{n}(\sqrt{X}+\frac{am_{0}}{2\sqrt{%
X}})-3\pi /4)  \notag \\
&&+(m_{0}+1)\sum\limits_{a=1}^{4K}(C_{2}(a)+C_{3}(a)\{X\})  \notag \\
&&\times \sum\limits_{n\leq \frac{X_{2}L^{2}}{V}}d(n)n^{-3/4}\cos (4\pi
\sqrt{n}(\sqrt{X}+\frac{am_{0}}{2\sqrt{X}})-\pi /4)+O(X^{1/4+\varepsilon
(U)}),
\end{eqnarray}%
where $C_{j}(a)$ is real, and
\begin{equation}
|C_{j}(a)|\ll \sup\limits_{0\leq a\leq K}\left( \frac{K}{a}\right) \ll
2^{K}\ll exp(O(\frac{L}{\log (L)})).
\end{equation}

Next we evaluate $\Omega _{2}^{\prime }$ in (9.41). By (9.42) and Lemma 3.3
\begin{equation}
w_{na}(\xi ,\frac{m}{2\sqrt{U}})=\mathrm{Re}\ (e(2\sqrt{n}(\sqrt{U}+\xi
)-1/8)\rho _{na}),
\end{equation}%
where
\begin{equation}
\rho _{na}=\sum\limits_{k,m=m_{0}}^{2m_{0}}e(2\sqrt{n}(\frac{ak}{\sqrt{U}}+%
\frac{m}{2\sqrt{U}}))=I_{1}I_{2}+I_{1}\sigma _{2}+I_{2}\sigma _{1}+\sigma
_{1}\sigma _{2}+O(U^{-9}),
\end{equation}%
moreover,
\begin{eqnarray}
I_{1} &=&\int_{m_{0}}^{2m_{0}}e(\frac{2\sqrt{n}au}{\sqrt{U}})du=\frac{\sqrt{U%
}}{2\pi ia\sqrt{n}}e(\frac{2\sqrt{n}aum_{0}}{\sqrt{U}})%
\mbox {\LARGE
\textbar }_{u=1}^{u=2}, \\
I_{2} &=&\int_{m_{0}}^{2m_{0}}e(\frac{\sqrt{n}u}{\sqrt{U}})du=\frac{\sqrt{U}%
}{2\pi i\sqrt{n}}e(\frac{\sqrt{n}um_{0}}{\sqrt{U}})\mbox {\LARGE
\textbar }_{u=1}^{u=2},  \notag \\
\sigma _{1} &=&\frac{1}{2}\sum\limits_{b=1}^{2}e(\frac{2abm_{0}}{\sqrt{U}}%
)+\sum\limits_{-U^{10}\leq b\leq U^{10},b\neq 0}\frac{e(\frac{2\sqrt{n}%
am_{0}u}{\sqrt{U}})}{2\pi i(\frac{2a\sqrt{n}}{\sqrt{U}}-b)}%
\mbox {\LARGE \textbar
}_{u=1}^{u=2}  \notag \\
&=&\frac{1}{2}\sum\limits_{b=1}^{2}e(\frac{2abm_{0}}{\sqrt{U}})-\frac{\pi
^{2}}{3}\frac{e(\frac{2\sqrt{n}am_{0}u}{\sqrt{U}})}{2\pi i}\frac{\sqrt{n}}{%
\sqrt{U}}(1+\varphi (\frac{2a\sqrt{n}}{\sqrt{U}}))\mbox
{\LARGE \textbar }_{u=1}^{u=2} \\
\sigma _{2} &=&\frac{1}{2}\sum\limits_{b=1}^{2}e(\frac{\sqrt{n}bm_{0}}{\sqrt{%
U}})-\frac{\pi ^{2}}{3}\frac{e(\frac{\sqrt{n}m_{0}u}{\sqrt{U}})}{2\pi i}%
\frac{\sqrt{n}}{\sqrt{U}}(1+\varphi (\frac{\sqrt{n}}{\sqrt{U}}))%
\mbox {\LARGE \textbar
}_{u=1}^{u=2}.
\end{eqnarray}%
The last equality is given by (9.25). Since $\sigma _{1}\sigma _{2}\ll 1,$
\noindent then
\begin{equation*}
\sqrt{U}\sum\limits_{\sqrt{n}\leq \frac{UL^{2}}{V}}d(n)n^{-3/4}e^{-\pi n(%
\frac{V}{U}+\frac{4}{A^{2}})}\ \ \ \ \ \ \ \ \ \ \ \ \ \ \ \ \ \ \ \ \ \ \ \
\ \ \ \ \ \ \ \ \ \ \ \ \ \ \ \ \ \ \ \ \ \ \ \ \ \ \ \ \ \ \ \ \ \ \ \ \
\end{equation*}%
\begin{equation}
\ \ \ \ \ \ \ \ \ \ \ \ \ \ \times \sum\limits_{a=1}^{K-a}(-1)^{K-a}\left(
\frac{K}{a}\right) \sigma _{1}\sigma _{2}\int_{\xi _{1}}^{\xi _{2}}e(2\sqrt{n%
}(\sqrt{U}+\xi )-1/8)d\xi \ll X^{1/4+\varepsilon (X)}
\end{equation}%
By (9.41), (9.46), (9.47) and (9.51), running the process of the evaluation
of (9.40), we obtain
\begin{equation*}
\Omega _{2}^{\prime }=C\sqrt{U}\ \mathrm{Re}\sum\limits_{n\leq \frac{UL^{2}}{%
V}}d(n)n^{-3/4}e^{-\pi n(\frac{V}{U}+\frac{4}{A^{2}})}\ \ \ \ \ \ \ \ \ \ \
\ \ \ \ \ \ \ \ \ \ \ \ \ \ \ \ \ \ \ \ \ \ \ \ \ \ \ \ \ \ \ \ \ \ \ \ \ \
\ \ \ \ \ \ \ \ \ \ \ \ \ \ \ \ \ \ \ \ \
\end{equation*}%
\begin{equation*}
\times \sum\limits_{a=1}^{K}(-1)^{K}\left( \frac{K}{a}\right)
(I_{1}I_{2}+I_{1}\sigma _{2}+I_{2}\sigma _{1})\int_{\xi _{1}}^{\xi _{2}}e(2%
\sqrt{n}(\sqrt{U}+\xi )-1/8)d\xi +O(X^{1/4+\varepsilon (X)}).
\end{equation*}%
Thus, we may rewrite in the form:
\begin{equation*}
\Omega _{2}^{\prime }=X\sum\limits_{a=1}^{4K+2}C_{1}(a)\sum\limits_{n\leq
\frac{X_{2}L^{2}}{V}}d(n)n^{-7/4}\cos (4\pi \sqrt{n}(\sqrt{X}+\frac{am_{0}}{2%
\sqrt{X}})-\pi /4)\ \ \ \ \ \ \ \ \ \ \ \ \ \ \ \ \ \ \ \ \ \ \ \ \ \ \ \ \
\ \ \ \ \ \ \ \ \ \ \ \ \ \ \ \ \ \ \ \
\end{equation*}%
\begin{eqnarray}
&&+\sqrt{X}\sum\limits_{a=1}^{4K+2}(C_{2}(a)+C_{3}(a)\{X\})\sum\limits_{n%
\leq \frac{X_{2}L^{2}}{V}}d(n)n^{-5/4}\cos (4\pi \sqrt{n}(\sqrt{X}+\frac{%
am_{0}}{2\sqrt{X}})-3\pi /4)  \notag \\
&&+O(X^{1/4+\varepsilon (X)}),
\end{eqnarray}%
where $C_{j}(a)$ satisfies (9.45).

Finally, by (9.13), (9.40), (9.44) and (9.52) we obtain that
\begin{eqnarray}
\Omega  &=&\Omega _{0}+\Omega _{1}+\Omega _{2}^{\prime }+\Omega _{20}+\Omega
_{\delta }+O(X^{1/4+\varepsilon (X)})  \notag \\
&=&X\sum\limits_{a=1}^{4K+2}C_{1}(a)\sum\limits_{n\leq \frac{X_{2}L^{2}}{V}%
}d(n)n^{-7/4}\cos (4\pi \sqrt{n}(\sqrt{X}+\frac{am_{0}}{2\sqrt{X}})-\pi /4)
\notag \\
&&+\sqrt{X}(m_{0}+1)\sum\limits_{a=1}^{4K}C_{2}(a)\sum\limits_{n\leq \frac{%
X_{2}L^{2}}{V}}d(n)n^{-5/4}\cos (4\pi \sqrt{n}(\sqrt{X}+\frac{am_{0}}{2\sqrt{%
X}})-3\pi /4)  \notag \\
&&+\sqrt{X}  \notag \\
&&\times \sum\limits_{a=1}^{4K+2}(C_{3}(a)+C_{4}(a)\{X\})\sum\limits_{n\leq
\frac{X_{2}L^{2}}{V}}d(n)n^{-5/4}\cos (4\pi \sqrt{n}(\sqrt{X}+\frac{am_{0}}{2%
\sqrt{X}})-3\pi /4)  \notag \\
&&+(m_{0}+1)  \notag \\
&&\times \sum\limits_{a=1}^{4K}(C_{5}(a)+C_{6}(a)\{X\})\sum\limits_{n\leq
\frac{X_{2}L^{2}}{V}}d(n)n^{-3/4}\cos (4\pi \sqrt{n}(\sqrt{X}+\frac{am_{0}}{2%
\sqrt{X}})-\pi /4)  \notag \\
&&+\Omega _{0}+\Omega _{\delta }+O(X^{1/4+\varepsilon (U)})
\end{eqnarray}%
for $U=[X],\ m_{0}\asymp \sqrt{X}L^{-2}$, where $C_{j}(a)$ is real and
independent of $X,$
\begin{equation}
|C_{j}(a)|\ll 2^{K}\ll \mathrm{exp}(O(\frac{L}{\log (L)})),
\end{equation}%
$\Omega _{0}$ is (9.14), $\Omega _{\delta }$ is (9.17) and (9.18).

\ \

\section{ PROOF OF THEOREM}

By (9.14) and (9.10),
\begin{equation*}
\Omega _{0}=\frac{(-1)^{K+1}\sqrt{2U}(m_{0}+1)^{2}}{\pi }\ \ \ \ \ \ \ \ \ \
\ \ \ \ \ \ \ \ \ \ \ \ \ \ \ \ \ \ \ \ \ \ \ \ \ \ \ \ \ \ \ \ \ \ \ \ \ \
\ \ \ \ \ \ \ \
\end{equation*}%
\begin{equation}
\times \sum\limits_{n\leq \frac{UL^{2}}{V}}d(n)n^{-3/4}e^{-\pi n(\frac{V}{U}+%
\frac{4}{A^{2}})}\int_{\xi _{1}}^{\xi _{2}}\cos (4\pi \sqrt{n}(\sqrt{U}+\xi
)-\pi /4)d\xi .
\end{equation}%
Since
\begin{equation*}
e^{-\pi n(\frac{V}{U}+\frac{4}{A^{2}})}\ll e^{-L^{2}}
\end{equation*}%
for $\frac{UL^{2}}{V}<n\leq \frac{X_{2}L^{2}}{V}$, then putting $\xi =\frac{1%
}{\sqrt{U}}\xi ^{\prime }$, we have
\begin{eqnarray}
\Omega _{0} &=&\frac{2(-1)^{K+1}(m_{0}+1)^{2}}{\sqrt{2}\pi }%
\sum\limits_{n\leq \frac{X_{2}L^{2}}{V}}d(n)n^{-3/4}e^{-\pi n(\frac{V}{U}+%
\frac{4}{A^{2}})}  \notag \\
&&\times \int_{1/16}^{3/16}\cos (4\pi \sqrt{n}(\sqrt{U}+\xi /\sqrt{U})-\pi
/4)d\xi +O(U^{-10})  \notag \\
&=&\frac{(-1)^{K+1}(m_{0}+1)^{2}}{4X^{1/4}}(\frac{X^{1/4}}{\sqrt{2}\pi }%
\sum\limits_{n\leq \frac{X_{2}L^{2}}{V}}d(n)n^{-3/4}\cos (4\pi \sqrt{nX}-\pi
/4)  \notag \\
&&+\delta _{01}(X)+\delta _{02}(X)+O(U^{-1})),
\end{eqnarray}%
\newline
where
\begin{equation*}
\delta _{01}(X)=C_{1}X^{1/4}\ \ \ \ \ \ \ \ \ \ \ \ \ \ \ \ \ \ \ \ \ \ \ \
\ \ \ \ \ \ \ \ \ \ \ \ \ \ \ \ \ \ \ \ \ \ \ \ \ \ \ \ \ \ \ \ \ \ \ \ \ \
\ \ \ \ \ \ \ \ \ \ \ \ \ \ \ \ \ \ \ \ \ \ \ \ \
\end{equation*}%
\begin{equation}
\times \sum\limits_{n\leq \frac{X_{2}L^{2}}{V}}d(n)n^{-3/4}(e^{-\pi n(\frac{V%
}{U}+\frac{4}{A^{2}})}-1)\int_{1/16}^{3/16}\cos (4\pi \sqrt{n}(\sqrt{U}+\xi /%
\sqrt{U})-\pi /4)d\xi ,
\end{equation}%
\begin{equation*}
\delta _{02}(X)=C_{2}X^{1/4}\ \ \ \ \ \ \ \ \ \ \ \ \ \ \ \ \ \ \ \ \ \ \ \
\ \ \ \ \ \ \ \ \ \ \ \ \ \ \ \ \ \ \ \ \ \ \ \ \ \ \ \ \ \ \ \ \ \ \ \ \ \
\ \ \ \ \ \ \ \ \ \ \ \ \ \ \ \ \ \ \ \ \ \ \ \ \
\end{equation*}%
\begin{equation}
\times \sum\limits_{n\leq \frac{X_{2}L^{2}}{V}}d(n)n^{-3/4}%
\int_{1/16}^{3/16}(\cos (4\pi \sqrt{n}(\sqrt{U}+\xi /\sqrt{U})-\pi /4)-\cos
(4\pi \sqrt{nX}-\pi /4))d\xi ,\ \ \ \ \ \ \ \ \ \ \ \ \ \ \ \ \ \ \
\end{equation}%
with $C_{1},C_{2}=O(1)$ be real and independent of $X$. Denote
\begin{equation}
\delta _{00}(X)=\Delta (X)-\frac{X^{1/4}}{\pi \sqrt{2}}\sum\limits_{n\leq
\frac{X_{2}L^{2}}{V}}d(n)n^{-3/4}\cos (4\pi \sqrt{nX}-\pi /4).
\end{equation}%
By (10.2),
\begin{equation}
\Omega _{0}=\frac{(-1)^{K+1}(m_{0}+1)^{2}}{4X^{1/4}}(\Delta (X)-\delta
_{0}(X)+O(U^{-1})),
\end{equation}%
where
\begin{equation}
\delta _{0}(X)=\delta _{00}(X)-\delta _{01}(X)-\delta _{02}(X).
\end{equation}%
For $X_{1}\leq X\leq X_{2}$, $X_{1}\asymp X_{2}$, $V=X_{2}^{\varepsilon
_{0}} $, $\varepsilon _{0}=1/\log L$, (2.59) implicates
\begin{equation}
\delta _{00}(X)\ll X^{\varepsilon }.
\end{equation}%
Using (9.29) and taking $\alpha =-1/4$ in Lemma 3.4, we have
\begin{equation}
\delta _{01}(X)\ll X^{\varepsilon }.
\end{equation}%
Since
\begin{equation*}
\cos (4\pi \sqrt{n}(\sqrt{U}+\xi /\sqrt{U})-\pi /4)-\cos (4\pi \sqrt{nX}-\pi
/4)=\ \ \ \ \ \ \ \ \ \ \ \ \ \ \ \ \ \ \ \ \ \ \ \ \ \ \ \ \ \ \ \ \ \ \ \
\ \ \ \ \ \ \ \ \ \ \ \ \ \ \ \ \ \
\end{equation*}%
\begin{equation*}
=\mathrm{Re}\ e(2\sqrt{nX}-1/8)(e(2\sqrt{n}(\sqrt{U}+\xi /\sqrt{U}-\sqrt{X}%
))-1)\ \ \ \ \ \ \ \ \ \ \ \ \ \ \ \ \ \ \ \ \ \ \ \ \ \ \ \ \ \ \ \ \ \ \ \
\ \ \ \ \ \ \ \
\end{equation*}%
\begin{equation*}
=\sqrt{n}\mathrm{Re}\ e(2\sqrt{nX}-1/8)4\pi i(\sqrt{U}-\sqrt{X}+\xi /\sqrt{U}%
)\int_{0}^{1}e(2\theta \sqrt{n}(\sqrt{U}-\sqrt{X}+\xi /\sqrt{U}))d\theta ,
\end{equation*}%
then using Lemma 3.4, we have
\begin{equation*}
\delta _{02}(X)\ll X^{\varepsilon }.
\end{equation*}%
Thus,
\begin{equation}
\delta _{0}(X)=\delta _{00}(X)-\delta _{01}(X)-\delta _{02}(X)\ll
X^{\varepsilon }.
\end{equation}%
Moreover, by (9.53) and (10.6),
\begin{equation}
\Omega =\frac{(-1)^{K+1}(m_{0}+1)^{2}}{4X^{1/4}}(\Delta (X)-X^{1/4}\Lambda
(X)-\delta (X)+O(X^{-1/2+\varepsilon (X)})),
\end{equation}%
where
\begin{equation*}
\Lambda (X) =\frac{X}{(m_{0}+1)^{2}}\sum\limits_{a=1}^{4K+2}C_{1}(a)\sum%
\limits_{n\leq \frac{X_{2}L^{2}}{V}}d(n)n^{-7/4}\cos (4\pi \sqrt{n}(\sqrt{X}+%
\frac{am_{0}}{2\sqrt{X}})-\pi /4)
\end{equation*}
\begin{equation}
+\frac{\sqrt{X}}{m_{0}+1}\sum\limits_{a=1}^{4K}C_{2}(a)\sum\limits_{n\leq
\frac{X_{2}L^{2}}{V}}d(n)n^{-5/4}\cos (4\pi \sqrt{n}(\sqrt{X}+\frac{am_{0}}{2%
\sqrt{X}})-3\pi /4),
\end{equation}
\begin{equation}
\delta (X) =\delta _{0}(X)+\delta _{1}(X)+\delta _{2}(X),
\end{equation}%
The three terms in the right side of the last equality are the following:

\ \

\noindent $\delta _{0}(X)$ is (10.10),
\begin{equation*}
\delta _{1}(X) \ \ \ \ \ \ \ \ \ \ \ \ \ \ \ \ \ \ \ \ \ \ \ \ \ \ \ \ \ \ \
\ \ \ \ \ \ \ \ \ \ \ \ \ \ \ \ \ \ \ \ \ \ \ \ \ \ \ \ \ \ \ \ \ \ \ \ \ \
\ \ \ \ \ \ \ \ \ \ \ \ \ \ \ \ \ \ \ \ \ \ \ \ \ \ \ \ \ \ \ \ \ \ \ \ \ \
\ \ \ \ \
\end{equation*}
\begin{equation*}
=\frac{X^{3/4}}{(m_{0}+1)^{2}}\sum\limits_{a=1}^{4K+2}(C_{3}(a)+C_{4}(a)\{X%
\})\sum\limits_{n\leq \frac{X_{2}L^{2}}{V}}d(n)n^{-5/4}\cos (4\pi \sqrt{n}(%
\sqrt{X}+\frac{am_{0}}{2\sqrt{X}})-3\pi /4)
\end{equation*}%
\begin{equation*}
+\frac{X^{1/4}}{(m_{0}+1)}\sum\limits_{a=1}^{4K}(C_{5}(a)+C_{6}(a)\{X\})\sum%
\limits_{n\leq \frac{X_{2}L^{2}}{V}}d(n)n^{-3/4}\cos (4\pi \sqrt{n}(\sqrt{X}+%
\frac{am_{0}}{2\sqrt{X}})-\pi /4),
\end{equation*}%
\begin{equation}
\delta _{2}(X)=\frac{X^{1/4}}{(m_{0}+1)^{2}}\Omega _{\delta }=\frac{X^{1/4}}{%
(m_{0}+1)^{2}}\sum\limits_{k,m=m_{0}}^{2m_{0}}\delta _{R}(\eta ,m)|_{\eta
_{K}}\ll X^{\varepsilon },\ \ \ \ \ \ \ \ \ \ \ \ \ \ \ \ \ \ \ \ \ \ \ \ \
\ \ \ \
\end{equation}%
where $\Omega _{\delta }$ is (9.17), $\delta _{R}(\eta ,m)$ is (9.5), $%
C_{j}(a),\delta _{2}(X)$ contain new constant $C=O(1)$ to be independent of $%
X$. \newline
Therefore, by (1.26) and (10.11) we obtain that
\begin{equation}
\Delta (X)=X^{1/4}\Lambda (X)+\delta (X)+O(X^{-1/2+\varepsilon (X)}),\
0<\varepsilon (X)=O(\frac{1}{\log \log X}).
\end{equation}

The proof of Theorem is complete. \ \ \ \ \ \ \ \ \ \ \ \ \ \ \ \ \ \ \ \ \
\ \ \ \ \ \ \ \ \ \ \ \ \ \ \ \ \ \ \ \ \ \ \ \ \ \ \ \ \ \ \ \ \ \ \ \ \ \
\ \ \ \ \ $\boxempty$

\ \

\textbf{NOTE}

For the convenient of the other application afterwards we write down the
accurate $\delta (X)$ of (10.15):
\begin{equation*}
\delta (X) =\Delta (X)-\frac{X^{1/4}}{\sqrt{2}\pi }\sum\limits_{n\leq \frac{%
X_{2}L^{2}}{V}}d(n)n^{-3/4}\cos (4\pi \sqrt{nX}-\pi /4) \ \ \ \ \ \ \ \ \ \
\ \ \ \ \ \ \ \ \ \ \ \ \ \ \ \ \ \ \ \ \ \ \ \ \ \
\end{equation*}
\begin{equation*}
+C_{1}X^{1/4}\sum\limits_{n\leq \frac{X_{2}L^{2}}{V}}d(n)n^{-3/4}(e^{-\pi n(%
\frac{V}{U}+\frac{4}{A^{2}})}-1)\int_{1/16}^{3/16}\cos (4\pi \sqrt{n}(\sqrt{U%
}+\xi /\sqrt{U})-\pi /4)d\xi \ \ \ \ \ \ \ \
\end{equation*}
\begin{equation*}
+C_{2}X^{1/4}\sum\limits_{n\leq \frac{X_{2}L^{2}}{V}}d(n)n^{-3/4}%
\int_{1/16}^{3/16}(\cos (4\pi \sqrt{n}(\sqrt{U}+\xi /\sqrt{U})-\pi /4)-\cos
(4\pi \sqrt{nX}-\pi /4))d\xi \ \ \ \ \
\end{equation*}
\begin{equation*}
+\frac{X^{3/4}}{(m_{0}+1)^{2}}\sum\limits_{a=1}^{4K+2}(C_{3}(a)+C_{4}(a)\{X%
\})\sum\limits_{n\leq \frac{X_{2}L^{2}}{V}}d(n)n^{-5/4}\cos (4\pi \sqrt{n}(%
\sqrt{X}+\frac{am_{0}}{2\sqrt{X}})-3\pi /4) \ \ \ \ \
\end{equation*}
\begin{equation*}
+\frac{X^{1/4}}{(m_{0}+1)}\sum\limits_{a=1}^{4K}(C_{5}(a)+C_{6}(a)\{X\})\sum%
\limits_{n\leq \frac{X_{2}L^{2}}{V}}d(n)n^{-3/4}\cos (4\pi \sqrt{n}(\sqrt{X}+%
\frac{am_{0}}{2\sqrt{X}})-\pi /4) \ \ \ \ \
\end{equation*}
\begin{equation}
+\frac{X^{1/4}}{(m_{0}+1)^{2}\sqrt{V}}\sum\limits_{-\sqrt{V}L\leq j\leq
\sqrt{V}L}e^{-\frac{\pi j^{2}}{V}}\sum\limits_{k,m=m_{0}}^{2m_{0}}\int_{\xi
_{1}}^{\xi _{2}}\delta _{Q}(B_{1})d\xi |_{\eta _{K}}, \ \ \ \ \ \ \ \ \ \ \
\ \ \ \ \ \ \ \ \ \ \ \
\end{equation}

where $\delta _{Q}(B_{1})$ is (8.28), $X_{1}\leq X\leq X_{2},\ U=[X]$, $%
m_{0}\asymp \sqrt{X_{2}}L^{-2},\ X_{1}\asymp X_{2}$, $L=\log X_{2},\ \xi
_{1}=1/16\sqrt{U},\ \xi _{2}=3/16\sqrt{U},\ A=C_{0}\sqrt{UL}$ $C_{j}(a)$ is
independent of $X$, and
\begin{equation*}
|C_{j}(a)|\ll exp(O(\frac{L}{\log L})).
\end{equation*}%
We can take $V=X_{2}^{\varepsilon _{0}}$, $\varepsilon _{0}=1/\log L$, $%
m_{0}=[\sqrt{X_{2}}L^{-2}]$, such that they are independent of $X$.

\ \

{\it Acknowledgements.} I would like to thank Professor Liang Zhang
for pointing out some errors and checking the paper.

\ \

\end{document}